\newtheorem{theorem}{Theorem}[section]
\newtheorem{lemma}{Lemma}[section]
\theoremstyle{definition}
\theoremstyle{remark}
\newtheorem{remark}{Remark}[section]
\numberwithin{equation}{section}
\newcommand{\N}{{\mathbb N}}
\newcommand{\R}{{\mathbb R}}
\def\f{\frac}
\renewcommand{\O}{\Omega}
\def\hf1{^\f{1}{1-\xi^2}}
\def\l{\lambda}
\def\be{\begin{equation}}
\def\en{\end{equation}}
\def\bs{\begin{split}}
\def\es{\end{split}}
\def\ba{\begin{align}}
\def\ea{\end{align}}
\def\da{\partial^\alpha}
\def\db{\partial^\beta}
\def\dg{\partial^\gamma}
\def\duno{\partial_1}
\def\ddue{\partial_2}
\def\dtre{\partial_3}
\def\dj{\partial_j}
\def\dt{\partial_t}
\def\div{\rm div\, }
\def\ds{\partial_\star}
\def\dsharp{\partial_\sharp}
\def\du{\partial_1}
\def\dt{\partial_t}
\def\dd{\partial_2}
\def\dj{\partial_j}
\def\dtr{\partial_3}
\def\dnu{\partial_\nu}
\def\F{\mathcal F }
\def\G{\mathcal G }
\renewcommand{\a}{\alpha}
\renewcommand{\b}{\beta}
\def\hf{\hat F}
\def\hm{H^m(  \O)}
\def\hma{H^m_\ast(  \O)}
\def\hmua{H^{m-1}_\ast(  \O)}
\def\hmaa{H^m_{\ast\ast}(  \O)}
\def\hmaaa{H^m_{\ast\ast\ast}(  \O)}
\def\a{\alpha}
\def\aa{{\ast\ast}}
\def\aaa{{\ast\ast\ast}}
\def\b{\beta}
\def\cC
\def\cL
\def\ctm{\cC_T(H^m)}
\def\ctma{\cC_T(H^m_\ast)}
\def\ctmua{\cC_T(H^{m-1}_\ast)}
\def\ctmda{\cC_T(H^{m-2}_\ast)}
\def\ctmta{\cC_T(H^{m-3}_{\ast})}
\def\ctmaa{\cC_T(H^m_{\ast\ast})}
\def\ctmuaa{\cC_T(H^{m-1}_{\ast\ast})}
\def\ctmdaa{\cC_T(H^{m-2}_{\ast\ast})}
\def\ctmtaa{\cC_T(H^{m-3}_{\ast\ast})}
\def\ctra{\cC_T(H^r_\ast)}
\def\ctraa{\cC_T(H^r_{\ast\ast})}
\def\inn{\quad\hbox{in }}
\author[P. Secchi]{Paolo Secchi}
\address{INdAM Unit \& Department of Civil, Environmental, Architectural Engineering and Mathematics (DICATAM), University of Brescia, Via Valotti 9, 25133 Brescia, Italy}
\email{paolo.secchi@unibs.it}
\title[Incompressible Limit in MHD]
{The Incompressible Limit of the Equations of Compressible Ideal Magneto-Hydrodynamics with perfectly conducting boundary}
\keywords{Compressible ideal Magneto-Hydrodynamics, Mach number, incompressible limit, singular limit, perfectly conducting wall}
\subjclass[2010]{35L50, 35Q35, 76M45, 76W05}
\begin{document}

\begin{abstract}
We consider the initial-boundary value problem in the halfspace for the system of equations
of ideal
Magneto-Hydrodynamics with a perfectly conducting wall boundary condition. 
We show the
convergence of solutions
to the solution of the equations of incompressible MHD as the Mach number goes
to zero. Because of the characteristic boundary, where a loss of regularity in the
normal direction to the boundary may occur, the convergence is shown
in suitable anisotropic Sobolev spaces which take account of the
singular behavior at the boundary.
\end{abstract}

\date{\today}

\maketitle

%--------------------------------------------------------------------------------------

%\tableofcontents

%%%%%%%%%%%%%%%%%%%%%%
\section{Introduction}
\label{sect1}

We consider the equations of ideal Magneto-Hydrodynamics (MHD) for the
motion of an electrically conducting fluid, where "ideal" means that
the effect of viscosity and electrical resistivity is neglected (see \cite{freidberg}):
\begin{subequations}\label{mhd}
\begin{align}
\rho_p(p^\l)(\partial_t+v^\l\cdot\nabla)p^\l+\rho^\l\nabla\cdot v^\l=0,\label{mhd1}\\
\rho^\l(\partial_t+(v^\l\cdot\nabla))v^\l+\l^2\nabla p^\l+\mu
H^\l\times(\nabla\times
H^\l)=0,\label{mhd2}\\
(\partial_t+(v^\l\cdot\nabla))H^\l-(H^\l\cdot\nabla)v^\l + H^\l\nabla\cdot
v^\l=0.\label{mhd3}
\end{align}
\end{subequations}
Here the pressure $p^\l=p^\l(t,x)$, the velocity field $v^\l=v^\l(t,x)=
(v^\l_1,v^\l_2,v^\l_3)$, the magnetic
field $H^\l=H^\l(t,x)=(H^\l_1,H^\l_2,H^\l_3)$ are unknown functions of
time $t$ and space variables $x=(x_1,x_2,x_3)$. The density $\rho^\l$ is
given by the
equation of state $\rho^\l=\rho(p^\l)$ where
$\rho>0$ and $\partial \rho/\partial p\equiv\rho_p>0$ for $p>0$. The
magnetic permeability $\mu$
is set equal to $1$ without loss of generality. The coefficient $\l$ is
essentially the
inverse of the Mach number. We denote 
$\partial_t=\partial/\partial t,\partial_i=\partial/\partial
x_i,\nabla=(\partial_1,\partial_2,\partial_3)$ and use the conventional
notations of vector
analysis. The system \eqref{mhd} is supplemented with the divergence constraint
\begin{equation}\label{divH}
\nabla\cdot H^\l=0
\end{equation}
on the initial data.
%%%%%%%%%%%%%%%%%%%%%%%%%%%%%%%%%%%%%

We
study the initial-boundary value problem corresponding to a perfectly
conducting wall
boundary condition. Set $\Omega=\R^3_+=\{x_1>0\}$ and let us denote its
boundary
by $\Gamma$. We also denote $Q_T=(0,T)\times\Omega,\,
\Sigma_T=(0,T)\times\Gamma$ and denote by $\nu=(-1,0,0)$ the unit outward normal to $\Gamma$. We are interested in the study of
the initial-boundary value problem under the boundary conditions
\begin{equation}\label{bc}
v^\l \cdot\nu=0,\quad H^\l\cdot\nu=0 \quad\rm{on}\quad \Sigma_T.
\end{equation}
System \eqref{mhd} -- \eqref{bc} is supplemented with initial conditions
\begin{equation}\label{ic}
(p^\l,v^\l,H^\l)_{|t=0}=(p^\l_0,v^\l_0,H^\l_0) \quad\rm{in}\quad \Omega.
\end{equation}

We study the singular limit as $\l\rightarrow+\infty$.  The limit equations to
system \eqref{mhd} - \eqref{bc}
are
\begin{equation}\label{limit-equ}
\begin{array}{ll}
\overline\rho(\partial_t+(w\cdot\nabla))w+\nabla
\pi+B\times(\nabla\times B)=0,\\
\partial_tB+(w\cdot\nabla)B-(B\cdot\nabla)w=0\\
\nabla\cdot w=0,\qquad \nabla\cdot H=0,\qquad&\hbox{in }Q_T,\\
w\cdot\nu=0,\quad B\cdot\nu=0\quad&\hbox{on }\Sigma_T,\\
(w,B)_{|t=0}=(w_0,B_0)\quad\quad&\hbox{in }\Omega,
\end{array}
\end{equation}
where  $\overline\rho=\rho(0)$ and $w_0$ is such that $\nabla\cdot w_0=0$ in $\Omega$ and
$w_0\cdot\nu=0$ on $\Gamma$ and
analogously for $B_0$.

 \eqref{mhd} - \eqref{bc} is an example of initial boundary value problem for
quasilinear symmetric hyperbolic systems with characteristic boundary. Because of a
possible loss of derivatives in
the normal direction to the boundary, see \cite{MR2289911,MR336093}, in general the solution of such mixed problems
is not in the usual Sobolev space $H^m(\Omega)$, as for the non-characteristic case,
but in the anisotropic weighted Sobolev space $H^m_\ast(\Omega)$ (the definition is
given in the next section). Problem (1.1) - (1.3) was first studied by Yanagisawa and Matsumura \cite{MR1092572}.
As regards the loss of regularity for the solutions of the MHD
equations, Ohno-Shirota \cite{MR1658400}
prove that a mixed problem for the linearized MHD equations is ill-posed in
$H^m(\Omega)$ for $m\geq 2$. A general
regularity
theory for linear and quasilinear systems with characteristic boundary may be found in \cite{MR1346224,MR1401431,S96MR1405665}, see also \cite{MR1070840}. The
application to (1.1) - (1.3) is given in \cite{MR1314493}, where we prove the well-posedness in
the sense of Hadamard in the space $H^m_\ast(\Omega)$. In \cite{MR1941267} we improve the result of
\cite{MR1314493} as we show the solvability in the anisotropic Sobolev space $\hmaa$,
hence obtaining a better regularity than in $\hma$. This also allows to decrease the
smallest order of regularity from $m\geq 8$ to $m\geq 6$.

The initial-boundary value problem for
the incompressible MHD equations (1.4) was studied in \cite{MR1257135}. 

As is well-known from
Fluid Mechanics, one can derive formally the incompressible models such as the
Navier-Stokes equations, the Euler equations or the system \eqref{limit-equ} from the compressible
ones, namely the compressible Navier-Stokes, Euler equations or the equations of ideal
MHD. This corresponds to passing to the limit in the appropriate non dimensional form as
the Mach number goes to zero. This formal
derivation is the argument of many papers in the case of the Euler equations, see \cite{MR1320000,MR615627,MR748308, MR1834114}, the papers \cite{MR925620,MR1458527,MR918838,MR849223} where the phenomenon of the initial layer is
considered; the papers \cite{MR2106119,MR834481,MR1765773} in the presence of the boundary. In the MHD case, the
rigorous derivation of (1.4) from the non dimensional system (1.1) - (1.3) is shown
in \cite{MR665380,MR3452249,MR3962511,MR4665038,MR615627}. See also \cite{MR3803773,MR1039472,MR4028271,MR1339828} for the small Alfv\'en number limit.

In the present paper we show
such derivation for the initial-boundary value problem \eqref{mhd}--\eqref{bc}. While for the periodic boundary
conditions the analysis is essentially a repetition of that for the Euler
equations, in our case we need completely different and much more subtle
arguments. Because of the above mentioned singular behavior at the boundary, we work in
the anisotropic Sobolev space $\hmaa$. 

The most difficult part is to prove the
uniform estimate \eqref{u-uniform} in Theorem \ref{th-uniform}. 
Sections 3 and 4 are dedicated to this proof. 
We
adapt the approach of \cite{MR1941267}, by first proving the apriori estimates of purely tangential and tangential and first
order normal derivatives. Then we show separate apriori estimates of the higher normal derivatives of the non-characteristic part of the solution, that is the part corresponding to the invertible part of the boundary matrix, and the characteristic part of the solution. 
At this point it is crucial to exploit the higher regularity of the non-characteristic part in the function space $\hmaaa$, with respect to the regularity of the characteristic part in the other space $\hmaa$. 
{A particular care is needed for the proof of the equivalence of the weighted normal derivative with a function vanishing at the boundary, in terms of the anisotropic regularity of the function, and the conormal derivative.} The proof of the uniform estimate is totally depending on the general structure \eqref{structure} of the equations, as {highlighted} in \cite{MR748308}, so we believe that the same method could work for other problems with characteristic boundary and a similar structure.

 The convergence is shown in the presence of or without
the initial layer in Theorems \ref{weak-conv} and \ref{strong-conv} respectively, depending whether the limiting
initial velocity $w_0$ satisfies $\div w_0\not=0$ or $\div w_0=0$. For, we use a compactness
argument and, in the first case, also exploit the asymptotic behavior of solutions to
the linearized acoustic equations in the unbounded domain following the argument of \cite{MR1458527}. A similar argument shows
the strong convergence of the gradient of the total pressure, see \eqref{2.19}, Theorem \ref{strong-conv}. The proofs of Theorems \ref{weak-conv} and \ref{strong-conv} are given in Sections 5 and 6, respectively.

Recently there appeared the paper \cite{wang-zhang}, where the authors study the same problem with a different approach.

\section{Formulation of the problem, notations and main results}

Let us introduce the total pressure $q^\l=\l p^\l+(1/2\l)|H^\l|^2$.
In terms of $q^\l$ the equation for the pressure \eqref{mhd1} takes
the form
\begin{equation}\label{mhd21}
\frac{\rho_p^\l}{\rho^\l}\{(\partial_t+v^\l\cdot\nabla)q^\l-\frac{H^\l}{\l}
\cdot(\partial_t+(v^\l\cdot\nabla))H^\l\}+\l\nabla\cdot
v^\l=0,
\end{equation}
where it is understood that
$\rho^\l=\rho(q^\l/\l-(1/2)|H^\l/\l|^2)$ and similarly for
$\rho_p^\l$. Since $H^\l\times(\nabla\times
H^\l)=(1/2)\nabla|H^\l|^2-(H^\l\cdot\nabla)H^\l$, the equation
for the velocity \eqref{mhd2}  takes the form
\begin{equation}\label{mhd22}
\rho^\l(\partial_t+(v^\l\cdot\nabla))v^\l+\l\nabla q^\l-
(H^\l\cdot\nabla)H^\l=0.
\end{equation}
Finally we derive $\nabla\cdot v^\l$ from (2.1) and rewrite the equation
for the magnetic field \eqref{mhd3}  as
\begin{equation}\label{mhd23}
(\partial_t+(v^\l\cdot\nabla))H^\l-(H^\l\cdot\nabla)v^\l -
\frac{H^\l}{\l}\frac{\rho_p^\l}{\rho^\l}\{(\partial_t+v^\l\cdot\nabla)q^\l-\frac{H^\l}{\l}
\cdot(\partial_t+(v^\l\cdot\nabla))H^\l\}=0.
\end{equation}

%%%%%%%%%%
We rewrite \eqref{mhd21}--\eqref{mhd23}  as
\begin{equation} 
\begin{array}{ll}\label{mhdq}

\begin{pmatrix}
 {\rho_p/\rho}&\underline
0&-({\rho_p/\rho})H^\l/\l\\
\underline 0^T&\rho I_3&0_3\\
-({\rho_p/\rho})(H^\l/\l)^T&0_3&a_0^\l
 \end{pmatrix}
 \dt
 \begin{pmatrix}
 q^\l\\ 
 v^\l\\
H^\l
\end{pmatrix}
+\\
\\
\begin{pmatrix}
(\rho_p/\rho)
v^\l\cdot\nabla&\l\nabla\cdot&-({\rho_p/\rho})(H^\l/\l) v^\l\cdot\nabla\\
\l\nabla&\rho v^\l\cdot\nabla I_3&-H^\l\cdot\nabla I_3\\
-({\rho_p/\rho})(H^\l/\l)^T v^\l\cdot\nabla&-H^\l\cdot\nabla I_3&a_0^\l
v^\l\cdot\nabla
\end{pmatrix}
\begin{pmatrix}
q^\l  \\ v^\l \\ H^\l
\end{pmatrix}=0\,,
\end{array}
\end{equation}
where $\underline 0=(0,0,0)$ and
$$
a_0^\l=I_3+({\rho_p/\rho})(H^\l/\l)\otimes(H^\l/\l).$$It is well known
that the constraint \eqref{divH} can be seen just as a restriction
on the initial value $H^\l_0$. Under this restriction, \eqref{mhdq}  is equivalent to \eqref{mhd}.
The quasilinear symmetric system \eqref{mhdq} is hyperbolic if the state equation $\rho=\rho(p,S)$ satisfies the hyperbolicity condition:
\begin{equation}\label{hyper}
\rho>0,\quad \rho_p>0.
\end{equation}
We write \eqref{mhdq} in compact form as
\begin{equation}\label{compact}
Lu^\l=A_0\partial_tu^\l
+\sum^n_{j=1}(A_j+\l C_j)\partial_ju^\l=0\,,
\end{equation}
where $u^\l=(q^\l,v^\l,H^\l)$ and where the coefficient matrices have the
special structure
(see \cite{MR748308}) 
\begin{subequations}\label{structure}
\begin{align}
& A_j=A_j(u^\l,u^\l/\l)\qquad j=1,2,3,\label{structure1}\\
& A_0=A_0(u^\l/\l)\label{structure2}\\
&  C_j\;\hbox{ are constant symmetric matrices}\label{structure3}\\
&  A_\nu:=\sum_{j=1}^3A_j\nu_j=-A_1 \equiv 0\quad\hbox{on}\,\,\Sigma_T.\label{structure4}
\end{align}
\end{subequations}
Let us define
\[
M=
\begin{pmatrix}
 0&\nu&\underline 0&0\\
  0&\underline 0&\nu&0
\end{pmatrix},
\]
$u_0^\l=(q^\l_0,v^\l_0,H^\l_0)$ where $q^\l_0=\l
p^\l_0+(1/2\l)|H^\l_0|^2$.
We rewrite (1.1), (1.2), (1.3)
as
\begin{equation}
\begin{array}{ll}\label{ibvp}
Lu^\l=0&\quad\hbox{on}\,\, Q_T,\\
Mu^\l=0&\quad\hbox{on}\,\,\Sigma_T,\\
u^\l_{|t=0}=u_0^\l&\quad\hbox{on}\,\,\Omega.
\end{array}
\end{equation}
When, as in
\eqref{ibvp}, we multiply matrices by vectors, vectors have always to be
considered as column
vectors.

Let us introduce some notations. Let $H^m(\Omega)$ be the usual Sobolev
space of order $m,\, m=1,2,...$, and let $||\cdot||_m$ denotes
its norm. The norm of $L^2(\Omega)$ is denoted by $||\cdot ||$, the norm of
$L^p(\Omega),\, 1\leq
p\leq \infty$, by $|\cdot|_p$. Let $\sigma\in C^\infty(\overline{R}_+)$ be a
monotone
increasing function such that $\sigma(x_1)=x_1$ in a neighborhood of the origin and
$\sigma(x_1)=1$ for any $x_1$ large enough. Let us introduce the differential
operator in
the tangential directions (conormal derivative)
$$
\partial^{\alpha}_\ast
=(\sigma(x_1)\partial_1)^{\alpha_1}\partial_2^{\alpha_2}\partial_3^{\alpha_3}
$$
where $\alpha =(\alpha
_1,\a_2,\alpha _3), |\alpha |=\alpha_1+\a_2+\alpha_3$.
Given $m\geq 1$, the function space
$H^m_\ast(\Omega)$ is defined as the set of functions $u\in L^2(\Omega)$
such that
$\partial^{\alpha}_\ast\duno^k u\in
L^2(\Omega)$ if $|\a|+2k\leq m$. Derivatives are considered in the
distribution sense. $H^m_\ast(\Omega)$ is normed by
$$||u||^2_{m,\ast
}=\sum
_{|\alpha|+2k\leq m}||\partial^{\alpha}_\ast
\partial_1^ku||^2.$$
In the sequel we will refer to $|\a|+2k$ as to the order of the operator
$\partial^{\alpha}_\ast\duno^k$. Thus the normal derivative behaves in
$H^m_\ast$ like a
differential operator of order two.
We also define other
function spaces. 

The space $H^m_{\ast\ast}(\Omega),\, m\geq 1,$ consists of
the functions
$u\in L^2(\Omega)$ such that
$\partial^{\alpha}_\ast\duno^k u\in
L^2(\Omega)$ if $|\a|+2k\leq m+1,|\a|\leq m$. $H^m_{\ast\ast}(\Omega)$ is
normed by
$$||u||^2_{m,\ast\ast
}=\sum_{\a,k}
||\partial^{\alpha}_\ast
\partial_1^ku||^2$$
where the sum is taken over all multi-indices $\alpha$ and indices $k$ such
that $|\alpha
|+2k\leq m+1,|\alpha|\leq m$.

The space $\hmaaa,\, m\geq 1,$ consists of the
functions $u\in L^2(\Omega)$ such that
$\partial^{\alpha}_\ast\duno^k u\in
L^2(\Omega)$ if
$|\a|\leq m$ and if $|\a|+2k\leq m+2$ for $k\geq 2$, $|\a|+2k\leq m+1$ for
$k=1$.
$H^m_\aaa(\Omega)$ is normed by  $$||u||^2_{m,\ast\ast\ast
}=\sum_{\a,k}
||\partial^{\alpha}_\ast
\partial_1^ku||^2$$
where the sum is taken over all multi-indices $\alpha$ and indices $k$ such
that $|\alpha
|+2k\leq m+2$ if $k\geq2$, $|\alpha
|+2k\leq m+1$ if $k=1$, and such that $|\alpha|\leq m$.
Clearly
$$
H^m(\Omega)\hookrightarrow \hmaaa\hookrightarrow
H^m_{\ast\ast}(\Omega)\hookrightarrow H^m_{\ast}(\Omega)\subset H^m_{loc}(\Omega),
$$
$$
H^m_{\ast}(\Omega)\hookrightarrow H^{[{m\over 2}]}(\Omega),\qquad
H^m_{\ast\ast}(\Omega)\hookrightarrow H^{[{m+1\over 2}]}(\Omega),
$$
where $[{m\over 2}]$ and $[{m+1\over 2}]$ denote the integer part of ${m\over 2}$ and ${m+1\over 2}$, respectively.
In particular $H^1_{\ast\ast}(\Omega)=H^1(\Omega)$. For the
sake of convenience we set
$H^0_{\ast\ast\ast}(\Omega)=H^0_{\ast\ast}(\Omega)=H^0_{\ast}(\Omega)=L^2(\Omega
)$. Let $X$
be a Banach space and let $T>0$; then $C([0,T];X)$, $L^{\infty}(0,T;X)$
denote respectively the space of continuous and essentially bounded functions defined
on $[0,T]$ taking values in $X$. $C^k([0,T];X)$ denotes the space of
$k$-continuously differentiable
functions on $[0,T]$ with values in $X$; $W^{k,\infty}(0,T;X)$ is the
space of essentially bounded functions
together with the
derivatives up to order $k$, defined on $[0,T]$ taking values in $X$. We define

\[
{\mathcal C}_T(H^m_\ast)=\bigcap_{k=0}^mC^k([0,T];H^{m-k}_\ast(\Omega)),\qquad
{\mathcal L}^{\infty}_T(H^m_\ast)= \bigcap_{k=0}^m
W^{k,\infty}(0,T;H^{m-k}_\ast(\Omega));\]
the norm is ($ess\,sup$ in the second case)
$$|||u|||_{m,\ast ,T}=\sup_{[0,T]}|||u(t)|||_{m,\ast},$$
where
$$
|||u(t)|||^2_{m,\ast }=\sum
_{k=0}^m||\partial^k_tu(t)||^2_{m-k,\ast }=\sum
_{|\gamma|+2h\leq m}||\partial_\star^\gamma\partial^h_1u(t)||^2,
$$
for
$\gamma=(k,\alpha),|\gamma|=k+|\alpha|,\partial^\gamma_\star=
\partial_t^k\partial^\alpha_\ast$.
We also define 
\[
{\mathcal C}_T(H^m_\aa)=\bigcap_{k=0}^mC^k([0,T];H^{m-k}_\aa(\Omega)),\qquad
{\mathcal L}^{\infty}_T(H^m_\aa)= \bigcap_{k=0}^m
W^{k,\infty}(0,T;H^{m-k}_\aa(\Omega)),
\]
with the norm ($ess\,sup$ in the second case)
$$|||u|||_{m,\aa ,T}=\sup_{[0,T]}|||u(t)|||_{m,\aa},$$
where
$$
|||u(t)|||^2_{m,\aa }=\sum
_{k=0}^m||\partial^k_tu(t)||^2_{m-k,\aa }=\sum_{\gamma,h}
||\partial_\star^\gamma\partial^h_1u(t)||^2,
$$
the last sum being taken over all the indices $\gamma=(k,\alpha),|\gamma|=k+|\alpha|,\partial^\gamma_\star=
\partial_t^k\partial^\alpha_\ast$, and $h$ such that $|\gamma |+2h\leq
m+1,|\gamma|\leq m$. Moreover we define the seminorm
$$
||[u(t)]||^2_{m,\aa }=\sum
_{k=0}^m|[\partial^k_tu(t)]|^2_{m-k,\aa }=\sum_{\gamma,h}
||\partial_\star^\gamma\partial^h_1u(t)||^2,
$$
the last sum being taken over all the indices $\gamma=(k,\alpha),|\gamma|=k+|\alpha|,\partial^\gamma_\star=
\partial_t^k\partial^\alpha_\ast$, and $h$ such that $1\leq|\gamma |+2h\leq
m+1,|\gamma|\leq m$.
Similarly we define
${\mathcal
C}_T(H^m),{\mathcal L}^{\infty}_T(H^m)$ by using
$H^{m-k}(\Omega)$ instead
of $H^{m-k}_\aa(\Omega)$. The norm is denoted by
$|||\cdot|||_{m,T}$. In the present context of the study of the singular limit as
$\l\rightarrow\infty$, it is convenient to introduce in $\ctmaa$ the following weighted
norm
$$|||u|||_{m,\aa,\l ,T}=\sup_{[0,T]}|||u(t)|||_{m,\aa,\l},$$
where
$$
|||u(t)|||^2_{m,\aa,\l }=\sum
_{k=0}^m||\l^{-k}\partial^k_tu(t)||^2_{m-k,\aa }.
$$
For each {\it fixed} $\l$, $|||u(t)|||_{m,\aa,\l }$ is equivalent to
$|||u(t)|||_{m,\aa}$. Moreover we define the corresponding seminorm
$$
||[u(t)]||^2_{m,\aa,\l }=\sum
_{k=0}^m|[\l^{-k}\partial^k_tu(t)]|^2_{m-k,\aa }=\sum_{\gamma,h}
||\l^{-k}\partial_\star^\gamma\partial^h_1u(t)||^2,
$$
the last sum being taken over all the indices $\gamma=(k,\alpha),|\gamma|=k+|\alpha|,\partial^\gamma_\star=
\partial_t^k\partial^\alpha_\ast$, and $h$ such that $1\leq|\gamma |+2h\leq
m+1,|\gamma|\leq m$.

Given the system $(2.7)$ for $u^\l$, we recursively define
$u^{\l(k)}_0, k\geq 1$, by formally
taking $k-1$ time derivatives of the equations, solving for
$\partial_t^ku^\l$ and
evaluating it at time $t=0$; for $k=0,u^{\l(0)}_0=u_0^\l$. We set $$
|||u_0^\l|||^2_{m,\aa}=\sum^m_{k=0}||u^{\l(k)}_0||^2_{m-k,\aa}, \qquad
|||u_0^\l|||^2_{m,\aa,\l}=\sum^m_{k=0}||\l^{-k}u^{\l(k)}_0||^2_{m-k,\aa}.
$$

We use the same notations for spaces of
scalar,
vector valued or matrix valued functions. 
Throughout the paper we will denote by $C$
generic constants which may vary from line to line or even in the same
line. Other constants are
denoted by $K_i$, suitable increasing functions are denoted by $\Phi,
\Phi_i$.

\bigskip

The following theorem gives the existence of solutions of (2.7) in $\hmaa$ for {\it
fixed} $\l$. From
it one can easily obtain an existence theorem for the original system (1.1)
- (1.3) with
$p^\l$ instead of $q^\l$. The proof of the theorem may be found in \cite{MR1941267}; the result improves the existence
result in $\hma$ of Yanagisawa-Matsumura \cite{MR1092572} and the author \cite{MR1314493}.

\begin{theorem}[\cite{MR1941267} Existence]\label{th-existence}
 Let $m\geq 6$ be an integer and $\l>0$. Let $\rho\in C^{m+1}$ and
$u_0^\l=(q^\l_0,v^\l_0,H^\l_0)\in  \hm$,
$p^\l_0=q^\l_0/\l-(1/2)|H_0^\l/\l|^2$ be such
that $\rho (p^\l_0)>0,\rho_p(p^\l_0)>0$ in
$\overline{\Omega}$, $\nabla\cdot H^\l_0=0$ in $\Omega$.
The data satisfy the compatibility conditions
$v_0^{\l(k)}\cdot\nu=0,k=0,\dots,m-1,H^\l_0\cdot\nu=0$ on $\Gamma$.

Then there exists $T_\l>0$ such that the mixed problem \eqref{ibvp}
has a unique solution $u^\l=(q^\l,v^\l,H^\l)\in{\mathcal C}_{T_\l}(H^m_\aa)$ with $\rho
(p^\l)>0,\rho_p (p^\l)>0$ for $p^\l=q^\l/\l-(1/2)|H^\l/\l|^2$, $\nabla\cdot H^\l=0$ in
$ Q_{T_\l}$. 
%A lower bound for $T$
%is given by inequalities of the form
%$$
%T\Phi_1^\l(\omega,|||u^\l_0|||_{m,\aa})\leq
%1.\eqno(2.8) $$
%The solution satisfies the
%estimate
%$$ |||u^\l|||_{m,\aa,T}\leq \Phi_2^\l(\omega,|||u^\l_0|||_{m,\aa}),\eqno(2.9)
%$$where $\Phi_1^\l,\Phi_2^\l$ are increasing functions of their arguments and
%where $\omega=
%1/\min_{\overline{\Omega}}{\rho
%(p^\l_0)}$ $+1/\min_{\overline{\Omega}}{(\rho_p(p^\l_0)/\rho
%(p^\l_0))}$.
Moreover
$\nu\cdot\partial_t^kv^\l(t)_{|\Gamma}\in H^{m-k-1/2}(\Gamma),k=0,\dots,$
$m-1$,
$\nu\cdot H^\l(t)_{|\Gamma}\in H^{m-1/2}(\Gamma)$, for each $t\in[0,{T_\l}]$. 
\end{theorem}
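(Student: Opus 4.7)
The plan is a classical Picard iteration adapted to the characteristic boundary and the weighted Sobolev setting. Starting from a smooth approximation of the data satisfying the compatibility conditions, I would construct a sequence $\{u^{\l,n}\}$ by solving at each step the mixed linear problem
\[
L(u^{\l,n-1})u^{\l,n}=0 \inn Q_T,\qquad Mu^{\l,n}=0 \inn \Sigma_T,\qquad u^{\l,n}_{|t=0}=u_0^\l,
\]
where $L(\cdot)$ denotes the operator $L$ with its coefficients frozen at the previous iterate. The underlying linear theory for characteristic symmetric hyperbolic systems of \cite{MR1346224,MR1401431,S96MR1405665} gives a solution in $\ctma$ at each step; the goal is then to close uniform bounds in the stronger norm $|||\cdot|||_{m,\aa ,T_\l}$ on a sufficiently small $\l$-dependent time interval.

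The heart of the matter is the a priori estimate for the linearized problem in $\hmaa$. First, I would apply tangential operators $\partial_\star^\gamma$ with $|\gamma|\leq m$ to the symmetric system, symmetrize and integrate by parts: the boundary terms vanish because $A_\nu\equiv 0$ on $\Gamma$ by \eqref{structure4}, and the commutators with the variable coefficients, which depend on $u^{\l,n-1}$ and $u^{\l,n-1}/\l$ by \eqref{structure1}--\eqref{structure2}, are lower order and absorbable via Gronwall. This yields the purely conormal bound on $\sum_{|\gamma|\leq m}\|\partial_\star^\gamma u^{\l,n}\|^2$. One then upgrades to include one normal derivative, i.e.\ $\partial_\star^\gamma\partial_1 u^{\l,n}$ with $|\gamma|+2\leq m+1$, $|\gamma|\leq m$, which is exactly what is needed to reconstitute the full $\hmaa$-norm.

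The decisive improvement over the $\hma$ theory is the separate estimate for the non-characteristic part of $u^\l$, corresponding to the invertible block of the constant matrix $C_1$: in our setting this is essentially the acoustic pair $(q^\l,v_1^\l)$. Solving the equations algebraically for $\partial_1$ of this block, using the invertibility of that block, gains one extra normal derivative over the estimate available for tangential and time derivatives, while the characteristic part is still controlled by the standard $\hma$ energy identity. The main technical obstacle, in my view, is the careful bookkeeping of the weight $\sigma(x_1)$ in the commutators between $\partial_\star$ and $\partial_1$ and of the asymmetric way normal derivatives contribute to the norm; the powers of $\l$ arising from the large terms $\l C_j$ must also be tracked, which is why the existence time $T_\l$ is allowed to depend on $\l$.

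Once the uniform bound $|||u^{\l,n}|||_{m,\aa ,T_\l}\leq\Phi(|||u_0^\l|||_{m,\aa})$ is in hand, the differences $u^{\l,n+1}-u^{\l,n}$ satisfy a linearized system whose coefficients are bounded by the uniform estimate, and a standard energy argument shows they are Cauchy in a weaker norm such as $\cC_{T_\l}(L^2)$; interpolating with the uniform $\hmaa$-bound gives convergence in $\ctmaa$ to the desired solution $u^\l$. The constraint $\nabla\cdot H^\l=0$ is preserved because taking the divergence of \eqref{mhd3} yields a homogeneous transport equation for $\nabla\cdot H^\l$, which annihilates the vanishing initial datum. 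Finally, the extra boundary regularity $\nu\cdot\partial_t^k v^\l(t)_{|\Gamma}\in H^{m-k-1/2}(\Gamma)$ and $\nu\cdot H^\l(t)_{|\Gamma}\in H^{m-1/2}(\Gamma)$, which is not automatic in $\hmaa$, is recovered by using the equations to express these normal traces in terms of tangential quantities that do lie in the right Sobolev space on $\Gamma$, combined with the compatibility conditions and the standard trace theory on the half-space.
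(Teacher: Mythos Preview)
The paper does not supply its own proof of this theorem: it states explicitly that ``the proof of the theorem may be found in \cite{MR1941267}'' and then moves on. So there is no in-paper argument to compare against line by line. That said, the a~priori machinery the paper develops in Section~3 for Theorem~\ref{th-uniform} is exactly the fixed-$\l$ version of the estimates underlying \cite{MR1941267}, and your outline matches that strategy closely: tangential energy estimates, then one normal derivative via energy, then the split into the non-characteristic block $(q,v_1,H_1)$ (estimated algebraically from the equations, gaining the extra normal regularity that puts it in $\hmaaa$) versus the characteristic block $u^{I\!I}$ (higher normal derivatives via energy, using $A_1^{I\!I,I\!I}=0$ on $\Gamma$). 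Your Picard scheme, contraction in a low norm, propagation of $\nabla\cdot H^\l=0$, and recovery of the trace regularity are all standard and correct in spirit.

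One genuine inaccuracy to flag: you write that after applying $\partial_\star^\gamma$ ``the boundary terms vanish because $A_\nu\equiv 0$ on $\Gamma$ by \eqref{structure4}''. That is not the full story. The boundary matrix in the energy identity is $A_\nu+\l C_\nu=-(A_1+\l C_1)$, and only $A_1$ vanishes on $\Gamma$; the singular part $\l C_1$ does not. The surviving boundary integrand is $2\l QV_1$ (cf.\ Lemma~\ref{lemma-stime-lin}), and it vanishes only because the boundary condition $v_1=0$ is preserved under tangential differentiation. For the step with one normal derivative the mechanism is different again: one uses the derived relation $\partial_1 q=0$ on $\Gamma$, obtained from \eqref{duno q} and $v_1=H_1=0$. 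If you only invoke $A_\nu=0$ you will not be able to close either estimate, so this point should be stated correctly.
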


The following theorems give the results about the singular limit
$\l\rightarrow\infty$.

\begin{theorem}[Uniform boundedness]\label{th-uniform}
Let $m\geq 6$ be an
integer and let the assumptions of Theorem \ref{th-existence} be satisfied.
Assume that
\begin{equation}\label{u0-uniform}
\begin{array}{ll}

||u^\lambda_0||_m\leq
K_1\qquad\forall\lambda\geq 1,
\end{array}
\end{equation}
where $K_1$ is independent of $\l$. Then for each $\l\ge1$ there exists a unique solution
$u^\l=(q^\lambda,v^\lambda,H^\lambda)\in\cC_T(H^m_\aa)$ of \eqref{ibvp}  in
$ Q_{T}$, where $T$ is independent of $\l$. The solutions satisfy the uniform estimate
\begin{equation}\label{u-uniform}
\begin{array}{ll}

|||u^\lambda|||_{m,\ast\ast,\l,T}+|||\dt H^\lambda|||_{m-1,\ast\ast,\l,T}\leq
K_2\qquad\forall\lambda\geq 1,
\end{array}
\end{equation}
with $K_2$ independent of $\l$.
\end{theorem}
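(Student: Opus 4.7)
The plan is to run a continuation/bootstrap argument based on Theorem~\ref{th-existence}. Fix $\l\ge 1$ and let $u^\l$ be the local solution given by that theorem on its maximal interval $[0,T_\l^\ast)$. The target is a differential inequality of the form
\[
E^\l(t)\le \Phi(E^\l(0))+\int_0^t\Phi(E^\l(s))\,ds,
\]
with $E^\l(t):=|||u^\l(t)|||^2_{m,\aa,\l}+|||\dt H^\l(t)|||^2_{m-1,\aa,\l}$ and $\Phi$ increasing and independent of $\l$. Hypothesis~\eqref{u0-uniform} together with \eqref{compact} gives $E^\l(0)\le \Phi(K_1)$ uniformly in $\l$, because the time derivatives $u^{\l(k)}_0$ are computed algebraically from the equation and each occurrence of $\l$ (produced by $\l C_j$) is absorbed by the weight $\l^{-k}$ in the norm. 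A standard continuity argument then produces a time $T>0$ independent of $\l$ on which $E^\l(t)\le K_2^2$, and the continuation criterion from \cite{MR1941267} extends $u^\l$ up to that $T$.

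For the core a~priori estimate I would adapt the three-step scheme of \cite{MR1941267}. First, for purely tangential and time derivatives $\partial_\star^\gamma u^\l$ with $|\gamma|\le m$, I apply $\partial_\star^\gamma$ to $Lu^\l=0$, pair in $L^2$ with $\l^{-2k}\partial_\star^\gamma u^\l$ (where $k$ is the number of time derivatives in $\gamma$), and integrate by parts. Symmetry of $A_0,A_j,C_j$ produces $\tfrac{d}{dt}\!\int_\O (A_0\partial_\star^\gamma u^\l)\cd\partial_\star^\gamma u^\l\,dx$ plus a boundary integral on $\Sigma_T$. The purely hyperbolic part of that boundary integral vanishes by $A_\nu\equiv 0$ on $\Gamma$ (see \eqref{structure4}), while the singular part $\l\int_\Gamma (\partial_\star^\gamma u^\l)^T C_\nu\,\partial_\star^\gamma u^\l\,d\sigma$ vanishes thanks to the block structure of $C_\nu$ together with $Mu^\l=0$, which kills the normal components $v^\l\cd\nu$ and $H^\l\cd\nu$ (and their tangent-to-$\Sigma_T$ derivatives). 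The commutators $[\partial_\star^\gamma,A_j]\dj u^\l$ are handled by Moser-type inequalities; the dependence $A_j=A_j(u^\l,u^\l/\l)$ is harmless because the extra $1/\l$ factor stays bounded for $\l\ge 1$.

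Next I would bootstrap to include normal derivatives. For one normal derivative $\partial_\star^\gamma\duno u^\l$ with $|\gamma|+2\le m+1$ and $|\gamma|\le m$, the noncharacteristic components $u^\l_{\rm nc}=(q^\l,v^\l_1,H^\l_1)$ are controlled by solving the equation algebraically for $\l\duno u^\l_{\rm nc}$ in terms of conormal and time derivatives; each factor of $\l$ so produced is compensated by the $\l^{-k}$ weight inherited from $\partial_\star^\gamma$. For the higher normal derivatives I split $u^\l=(u^\l_{\rm nc},u^\l_{\rm c})$ with $u^\l_{\rm c}=(v^\l_2,v^\l_3,H^\l_2,H^\l_3)$, bound $u^\l_{\rm nc}$ in the stronger space $\ctmaaa$ by iterating the algebraic solve, and control $u^\l_{\rm c}$ in $\ctmaa$ through the transport structure of the remaining equations (a curl-div decomposition for $v^\l$ and the induction equation \eqref{mhd3} for $H^\l$, with $H^\l\cd\nu=0$ eliminating boundary traces). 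The auxiliary bound on $\dt H^\l$ in $\ctmuaa$ then follows by reading off $\dt H^\l$ directly from \eqref{mhd3} and estimating the nonlinear right-hand side by the norm $|||u^\l|||_{m,\aa,\l,T}$.

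The principal obstacle, as flagged in the introduction, is matching every occurrence of the large parameter $\l$ to a compensating $\l^{-1}$ while simultaneously dodging the loss of normal regularity forced by the characteristic boundary. A technical point that underlies the split between $\hmaaa$ and $\hmaa$ is the equivalence between $\sigma(x_1)\duno f$ and $\duno f$ in anisotropic Sobolev norms when $f_{|\Gamma}=0$, a Hardy-type statement that converts the conormal estimates from the first step into genuine normal-derivative estimates on $v^\l_1$ and $H^\l_1$ (both of which vanish on $\Gamma$), and so closes the bootstrap for the noncharacteristic block. Once this equivalence is in place and combined with the three-step scheme above, the estimates assemble into a closed differential inequality for $E^\l(t)$ and yield \eqref{u-uniform}.
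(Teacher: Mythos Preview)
Your overall architecture---bootstrap/continuation closed by a differential inequality, built from tangential energy estimates, then tangential-plus-one-normal, then higher normal derivatives with a noncharacteristic/characteristic split and the $\hmaaa$/$\hmaa$ dichotomy---matches the paper, and the Hardy-type equivalence you invoke is exactly Theorems~\ref{main}--\ref{main3}. But there is a real gap in your tangential step. You address only the commutators $[\partial_\star^\gamma,A_j\dj]u^\l$ and say nothing about $\l[\partial_\star^\gamma,C_1\duno]u^\l$. This term does \emph{not} vanish even though $C_1$ is constant, because $[(\sigma\duno)^{\a_1},\duno]\neq0$; and it carries an explicit factor of $\l$ with no compensating $\l^{-1}$ weight (the weight is attached to $\dt$, not to conormal derivatives). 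A direct norm bound here would blow up as $\l\to\infty$. The paper's resolution (Lemmas~\ref{lemma-stime-tang} and~\ref{lemma-tang-1-norm}) is to avoid bounding the commutator in norm and instead estimate the \emph{pairing} $\int\l[\partial_\star^\a,C_1\duno]u\cdot\partial_\star^\a u$ directly: the identities \eqref{commut-sigma} rearrange it into a term controlled by $|||v_1|||_{m,\ast,\l}\,|||\l\duno q|||_{m-1,\ast,\l}$ (and, at the one-normal level, by $|||\duno v_1|||_{m-1,\aa,\l}\,|||\l\duno q|||_{m-1,\ast,\l}$). The new quantity $|||\l\duno q|||$ is then controlled in Lemma~\ref{3.6} via \eqref{duno q} and the $\hmaaa$ regularity of $v_1,H_1$---so the ingredient is in your sketch, but it has to be fed back into the tangential step, which as written cannot receive it.

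A second, related discrepancy: for one normal derivative the paper does \emph{not} solve algebraically for the noncharacteristic block as you propose. It runs an energy estimate on $\partial_\star^\b\duno u$ for \emph{all} components (Lemma~\ref{lemma-tang-1-norm}), and the boundary integral $2\l QV_1$ now vanishes because $Q=\partial_\star^\b\duno q=0$ on $\Sigma_T$---a \emph{derived} boundary condition read off from \eqref{duno q} together with $v_1=H_1=0$ on $\Gamma$. (The condition $H^\l\cdot\nu=0$ you cite plays no role in this boundary term, which involves only $q$ and $v_1$.) Also, only $\l\duno q$ carries a $\l$; $\duno v_1$ and $\duno H_1$ do not, so ``solving for $\l\duno u^\l_{\rm nc}$'' is not quite right. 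Finally, for higher normal derivatives of the characteristic block the paper does not use a curl--div decomposition: it performs an energy estimate on the $u^{I\!I}$-subsystem \eqref{3.25}, rewriting the singular forcing $\partial_\star^\gamma\duno^k(\l\partial_{2,3}q)$ as $\partial_\star^\gamma\duno^{k-1}\partial_{2,3}(\l\duno q)$ and once more closing through $|||\l\duno q|||_{m-1,\aa,\l}$.
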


If the divergence $\nabla\cdot  w_0\not= 0$, then the initial layer appears as for the
Euler equations \cite{MR925620,MR1458527,MR918838,MR849223}.

\begin{theorem}[Weak convergence]\label{weak-conv}

Let the
assumptions of Theorem 2.2 hold and let $(w_0,B_0)\in H^m(  \O)$ where
$\nabla\cdot B_0=0$ in $\Omega$, $B_0\cdot\nu=0$ on $\Gamma$.
Assume also
that 
\begin{equation}\label{conv-dati}
\begin{array}{ll}
\lim_{\l\rightarrow+\infty}(||v^\lambda_0-w_0||_{m,\ast\ast}
+||H^\lambda_0-B_0||_{m,\ast\ast})=0.
\end{array}
\end{equation}
Then 
\begin{subequations}\label{conv}
\begin{align}
u^\l=(q^\l,v^\l,H^\l)\rightarrow(0,w,B)\quad\text{weakly}-\ast\,\text{ in } L^\infty(0,T;\hmaa),\label{conv-u}
\\
v^\l\rightarrow w\quad\text{strongly in } C_{loc}((0,T]\times\overline  \O),\label{conv-v}
\\
H^\l\rightarrow B\quad\text{in } C([0,T];H^{m-1}_{\ast,loc}(  \O)),\label{conv-H}
\\
\rho^\l\rightarrow \overline\rho
\quad\text{in }C([0,T];H^{m}_{\aa}(  \O)),\label{conv-rho}
\end{align}
\end{subequations}
as
$\l\rightarrow+\infty$, where $(w,B)$ is the unique solution of \eqref{limit-equ} with initial conditions
$$
w_{|t=0}=P_Sw_0, \qquad B_{|t=0}=B_0,
$$where $P_S$ is the projection onto the solenoidal subspace incident to the Helmholtz
decomposition. 
\end{theorem}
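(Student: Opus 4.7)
The plan is to combine the uniform bound \eqref{u-uniform} with a Lions--Masmoudi--Schochet type compactness argument, isolating the solenoidal component of the velocity (which is strongly precompact) from the acoustic component (which disperses in the half-space following the argument of Isozaki used in \cite{MR1458527}). First, by Theorem~\ref{th-uniform} the family $(q^\l, v^\l, H^\l)$ is bounded in $L^\infty(0,T; H^m_{\aa}(\Omega))$, and additionally $\dt H^\l$ is bounded in $L^\infty(0,T; H^{m-1}_{\aa})$ with no $\l$-factor. Extracting a subsequence gives $(q^\l, v^\l, H^\l) \to (\bar q, w, B)$ weak-$\ast$ in $L^\infty(0,T; H^m_{\aa})$. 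From \eqref{mhd21}, $\nabla\cdot v^\l = O(1/\l)$, whence $\nabla\cdot w = 0$; the identity $\nabla\cdot H^\l = 0$ and the boundary conditions $v^\l\cdot\nu = H^\l\cdot\nu = 0$ pass to the limit directly. From \eqref{mhd22}, $\nabla q^\l$ is uniformly bounded in $\l$, so its weak-$\ast$ limit $\bar q$ has vanishing spatial gradient; after normalization by a time-dependent constant we take $\bar q \equiv 0$.

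Strong compactness of $H^\l$ comes for free: the $\l$-independent bound on $\dt H^\l$, combined with Aubin--Lions on any bounded subdomain, yields \eqref{conv-H}. For the velocity, decompose $v^\l = P_S v^\l + \nabla\Phi^\l$, where $P_S$ is the Helmholtz projector onto fields with vanishing normal trace on $\Gamma$. Projecting \eqref{mhd22} on solenoidal fields annihilates the singular term $\l\nabla q^\l$ and gives a uniform bound on $\dt(P_S v^\l)$, hence strong precompactness of $P_S v^\l$ in $C([0,T]; H^{m-1}_{\ast,\mathrm{loc}})$ by Aubin--Lions. Convergence \eqref{conv-rho} is immediate from $\rho^\l = \rho(q^\l/\l - |H^\l|^2/(2\l^2)) = \bar\rho + O(1/\l)$ in $H^m_{\aa}$.

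The main obstacle is to extract strong convergence for the acoustic part $\nabla\Phi^\l$, where the unbounded half-space geometry enters decisively. Up to remainders of size $O(1/\l)$, the pair $(q^\l, \nabla\Phi^\l)$ satisfies a linearized acoustic system
\begin{equation*}
\dt q^\l + \l\,\bar c^{\,2}\,\Delta\Phi^\l = f^\l, \qquad \dt\nabla\Phi^\l + \l\,\bar\rho^{-1}\nabla q^\l = \nabla g^\l,
\end{equation*}
on $\Omega = \R^3_+$ with boundary condition $\partial_\nu\Phi^\l = 0$ on $\Gamma$, where $f^\l,g^\l$ are uniformly bounded. Following the argument of \cite{MR1458527}, solutions of the corresponding homogeneous acoustic problem in the half-space decay strongly in $L^2_\mathrm{loc}$ on every interval $[\tau, T] \subset (0,T]$, because the waves reflect at the flat wall but escape to infinity along the tangential directions; a Duhamel formula absorbs the uniformly bounded forcing, so $\nabla\Phi^\l \to 0$ in $L^2_\mathrm{loc}((0,T]\times\overline\Omega)$. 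Combined with the strong precompactness of $P_Sv^\l$ this proves \eqref{conv-v}. All quadratic nonlinearities in \eqref{mhd21}--\eqref{mhd23} then pass to the distributional limit, so $(w,B)$ is a weak solution of \eqref{limit-equ}. The initial value $w_{|t=0} = P_Sw_0$ arises because only the solenoidal part of $v^\l_0$ survives the instantaneous initial layer, while $P_Sv^\l_0 \to P_Sw_0$ strongly by \eqref{conv-dati}; $B_{|t=0}=B_0$ follows directly from \eqref{conv-dati} and \eqref{conv-H}. Uniqueness of \eqref{limit-equ} from \cite{MR1257135} identifies the limit and promotes subsequence to full-sequence convergence.
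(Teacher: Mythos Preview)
Your strategy coincides with the paper's: uniform bounds from Theorem~\ref{th-uniform}, Helmholtz decomposition, Ascoli--Arzel\`a/Aubin--Lions compactness for $P_Sv^\l$ and $H^\l$, and the dispersion argument of \cite{MR1458527} for the acoustic component in the half-space. Two points, however, are not correct as written.

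First, the preliminary claims ``$\nabla\cdot v^\l = O(1/\l)$'' and ``$\nabla q^\l$ uniformly bounded $\Rightarrow$ $\nabla\bar q = 0$'' are not justified. From \eqref{mhd21} one has $\l\,\nabla\cdot v^\l = -(\rho_p/\rho)(\dt q^\l + \dots)$, and \eqref{u-uniform} only controls $\l^{-1}\dt q^\l$, so $\nabla\cdot v^\l$ is $O(1)$, not $O(1/\l)$; the same obstruction (only $\l^{-1}\dt v^\l$ is bounded) applies in \eqref{mhd22}, so $\l\nabla q^\l$ is not uniformly bounded and mere boundedness of $\nabla q^\l$ says nothing about its weak limit. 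In the paper, $q^\infty=0$ and $\nabla\cdot w=0$ are not obtained at this preliminary stage; they are outputs of the dispersion step and of the identification $w=\lim P_Sv^\l$.

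Second, and more substantively, the argument of \cite{MR1458527} requires the acoustic forcing $\F=(\G^0,\G)$ to be bounded uniformly in $L^1(\Omega)$ (not merely in some $H^s$), together with $\G_1\in H^1_0(\Omega)$; this is exactly the content of the paper's Lemma~\ref{lemma5.1}. The $L^1$ bound is what makes the local-energy decay of the free acoustic propagator effective through Duhamel in the unbounded half-space. Your phrase ``$f^\l,g^\l$ are uniformly bounded'' glosses over this: one must check explicitly that the remainders $(\rho^\l-\overline\rho)\dt v^\l$ and $(\rho_p^\l/\rho^\l-\overline\rho_p/\overline\rho)\dt q^\l$, as well as the quadratic terms, lie in $L^1(\Omega)$ uniformly in $\l$.
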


\begin{remark}
 (i) In \eqref{conv-v}, $(0,T]$ cannot be
replaced by $[0,T]$ because in general $w$ satisfies $w_{|t=0}=P_Sw_0$ instead of
$w_{|t=0}=w_0$, so that the initial layer develops since the convergence is not uniform
near $t=0$. 

(ii) The convergence in $C([0,T];H^{m-1}_{\ast,loc}(  \O))$ is the convergence
in $C([0,T];H^{m-1}_{\ast}(\O\cap B(0,r)))$ for every $r>0$; a Sobolev imbedding shows
that it implies the strong convergence in $C_{loc}([0,T]\times\overline  \O)$.
\end{remark}

The next theorem gives the strong convergence for well-prepared initial data.
\begin{theorem}[Strong convergence]\label{strong-conv}
Assume the hypotheses of
Theorems \ref{th-uniform} and \ref{weak-conv}. Let $w_0$ be such that
$\nabla\cdot
w_0=0$ in $\Omega$, $w_0\cdot\nu=0$ on $\Gamma$ and let
$(w,B)\in\cC_T(H^m)$ be the corresponding solution to \eqref{limit-equ}. Assume also that 
\begin{equation}\label{well-prep}
\begin{array}{ll}
\lambda||\nabla
q^\lambda_0||_{m-1}+\lambda||\nabla\cdot v^\lambda_0||_{m-1}\leq
K'_1\qquad\forall\lambda\geq 1,

\end{array}
\end{equation}
where $K'_1$ is independent of $\l$. Then
\begin{equation}
\begin{array}{ll}\label{u-uniform2}
|||u^\lambda|||_{m,\ast\ast,\l,T}+|||\dt u^\lambda|||_{m-1,\ast\ast,\l,T}+\l|||\nabla
q^\l|||_{m-1,\ast\ast,\l,T}
+\l|||\nabla\cdot v^\l|||_{m-1,\ast\ast,\l,T}\leq
K'_2
\end{array}
\end{equation}
for all $\lambda\geq 1$, where $K'_2$ and $T$ are independent of $\l$.
Moreover as
$\l\rightarrow+\infty$
\begin{subequations}\label{2.19}
\begin{align}
q^\l\rightarrow 0,\quad v^\l\rightarrow w\quad&\hbox{in }
C([0,T];H^{m-1}_{\ast,loc}(  \O)),\label{conv-qv}\\
 \l\nabla q^\l=\nabla(\l^2p^\l+|H^\l|^2/2)\rightarrow\nabla(\pi+|B|^2/2)\quad&\hbox{in }
C_{loc}((0,T]\times\overline  \O).\label{conv-grad-q}
\end{align}
\end{subequations}

\end{theorem}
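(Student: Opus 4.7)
The plan is to first establish the enhanced a priori bound \eqref{u-uniform2}, then extract strong convergence of $(q^\l,v^\l,H^\l)$ in $C([0,T];H^{m-1}_{\ast,loc})$ by compactness, and finally obtain strong convergence of $\l\nabla q^\l$ via an elliptic reformulation. The enhanced bound relies on the observation that well-preparedness \eqref{well-prep} propagates to all time derivatives at $t=0$: from \eqref{compact} one has $A_0\partial_tu^\l|_{t=0}=-\sum_j(A_j+\l C_j)\partial_ju_0^\l$, and by \eqref{structure3} together with the explicit block form in \eqref{mhdq} the potentially singular piece $\l\sum_jC_j\partial_ju_0^\l$ reduces to $\l\nabla\cd v_0^\l$ in the $q$-slot and $\l\nabla q_0^\l$ in the $v$-slot, both bounded by $K_1'$. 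Iterating, $\|\partial_t^ku^\l(0)\|_{m-k}\le C$ uniformly in $\l$ for every $0\le k\le m$, and applying the tangential/normal energy estimates underlying Theorem \ref{th-uniform} to the system satisfied by $\partial_tu^\l$ yields $|||\partial_tu^\l|||_{m-1,\aa,\l,T}\le C$. Solving algebraically in \eqref{mhd21} for $\l\nabla\cd v^\l$ and in \eqref{mhd22} for $\l\nabla q^\l$ in terms of already-controlled quantities produces the remaining two terms of \eqref{u-uniform2}.

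With \eqref{u-uniform2} at hand, $(u^\l)$ is bounded in $L^\infty(0,T;H^m_{\aa})$ and $(\partial_tu^\l)$ in $L^\infty(0,T;H^{m-1}_{\aa})$. Combining this with the compact embedding $H^m_{\aa}(\O\cap B_R)\hookrightarrow H^{m-1}_\ast(\O\cap B_R)$ on bounded balls and applying Aubin--Lions yields relative compactness of $(u^\l)$ in $C([0,T];H^{m-1}_{\ast,loc})$. The weak convergence result of Theorem \ref{weak-conv} identifies the unique limit as $(0,w,B)$, and because $\nabla\cd w_0=0$ with $w_0\cd\nu|_\Gamma=0$ we have $P_Sw_0=w_0$, so no initial layer forms and the convergence is uniform on $[0,T]$; this proves \eqref{conv-qv}.

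For \eqref{conv-grad-q}, \eqref{mhd22} provides the pointwise identity $\l\nabla q^\l=-\r^\l(\dt+v^\l\cd\nabla)v^\l+(H^\l\cd\nabla)H^\l$, whose weak limit (using $\dt v^\l\rightharpoonup\dt w$ in distributions from the previous step) combined with the limit momentum equation identifies the weak-$\ast$ limit of $\l\nabla q^\l$ as $\nabla(\pi+|B|^2/2)$. To promote this to strong $C_{loc}((0,T]\times\ov\O)$ convergence, take the divergence of \eqref{mhd22}; the tangency conditions $v^\l\cd\nu=H^\l\cd\nu=0$ on $\Gamma$ together with the induced identities $[(v^\l\cd\nabla)v^\l]\cd\nu|_\Gamma=[(H^\l\cd\nabla)H^\l]\cd\nu|_\Gamma=0$ yield a Neumann problem $\Delta(\l q^\l)=F^\l$ in $\O$ with $\partial_\nu(\l q^\l)|_\Gamma=0$, and analogously for $\pi+|B|^2/2$ with source $F$. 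The main obstacle is that the $\dt v^\l$ contribution to $F^\l$ does not converge strongly by itself ($\dt^2 v^\l$ is only $O(\l)$-bounded, so direct Aubin--Lions fails); we circumvent this using the compressible continuity $\dt\r^\l+\nabla\cd(\r^\l v^\l)=0$ to rewrite $\nabla\cd(\r^\l\dt v^\l)=-\dt^2\r^\l-\nabla\cd(\dt\r^\l v^\l)$. Since well-preparedness and \eqref{u-uniform2} give $\r^\l-\ov\rho=O(1/\l)$ with correspondingly small $\dt\r^\l$ and $\dt^2\r^\l$ in the appropriate norms, these terms together with the remaining ones converge strongly; elliptic regularity applied to the Neumann problem for the difference $\l q^\l-(\pi+|B|^2/2)$, followed by Sobolev embedding (enabled by $m\ge 6$), produces the strong $C_{loc}((0,T]\times\ov\O)$ convergence of $\nabla(\l q^\l)$ to $\nabla(\pi+|B|^2/2)$.
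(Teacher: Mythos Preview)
Your derivation of the enhanced bound \eqref{u-uniform2} and of \eqref{conv-qv} matches the paper's argument: well-preparedness forces $u_0^{\l(1)}$ (hence $\l^{-k+1}u_0^{\l(k)}$) to be bounded in $H^{m-k}$ uniformly in $\l$, the a~priori machinery of Section~3 then yields $|||\partial_t u^\l|||_{m-1,\ast\ast,\l,T}\le C$, and the singular terms $\l\nabla q^\l,\ \l\nabla\cdot v^\l$ are recovered algebraically. Compactness plus the identification of the limit give \eqref{conv-qv}.

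The gap is in your treatment of \eqref{conv-grad-q}. The rewriting
\[
\nabla\cdot(\rho^\l\partial_t v^\l)=-\partial_t^2\rho^\l-\nabla\cdot(\partial_t\rho^\l\,v^\l)
\]
is obtained by differentiating the continuity equation $\partial_t\rho^\l+\nabla\cdot(\rho^\l v^\l)=0$ in time, so it is a tautology: the two sides contain exactly the same information. Your claim that $\partial_t^2\rho^\l$ is ``correspondingly small'' is not justified. A direct chain-rule computation gives, up to $O(1/\l)$ terms,
\[
\partial_t^2\rho^\l\;\approx\;\rho_p(p^\l)\,\frac{\partial_t^2 q^\l}{\l},
\]
and \eqref{u-uniform2} only controls $\l^{-1}\partial_t^2 q^\l$ in $H^{m-2}_{\ast\ast}$ \emph{boundedly}, not as $o(1)$. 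Equivalently, substituting back one finds $\partial_t^2\rho^\l=-\nabla\cdot(\rho^\l\partial_t v^\l)+O(1/\l)$, which is precisely the term you are trying to eliminate. Hence the source $F^\l$ in your Neumann problem does not converge strongly by this argument, and the elliptic step does not close. This is also consistent with Remark~2.2(ii): uniform-in-$t$ convergence of $\l\nabla q^\l$ near $t=0$ is not expected, whereas a successful elliptic argument with $F^\l\to F$ in $C([0,T];\cdot)$ would produce exactly that.

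The paper proceeds differently for \eqref{conv-grad-q}. It applies the acoustic-dispersion argument of \cite{MR1458527} not to $(q^\l,v^\l)$ but to the \emph{time-differentiated} pair $(\partial_t q^\l,\partial_t v^\l)$, which again solves a system of the form \eqref{5.2} with a source $\mathcal F_t$ that is bounded in $L^1\cap H^2$ thanks to \eqref{u-uniform2} (Lemma~6.1). Dispersion in the half-space then yields $\partial_t v^\l\to\partial_t w$ in $C_{loc}((0,T]\times\overline\Omega)$. Combined with the already-established strong convergence of $(v^\l\cdot\nabla)v^\l$, $(H^\l\cdot\nabla)H^\l$ and $\rho^\l$, the momentum equation \eqref{mhd22} gives $\l\nabla q^\l\to\nabla(\pi+|B|^2/2)$ in $C_{loc}((0,T]\times\overline\Omega)$ directly, without any elliptic problem.
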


\begin{remark}
 (i) The existence of the solution $(w,B)$ to \eqref{limit-equ} in the class
$\ctm$ is shown in \cite{MR1257135}; the total pressure $\pi$ in \eqref{limit-equ} is defined up to a constant
and such that $\nabla\pi\in \cC_T(H^{m-1})$.

(ii) From \eqref{mhd22}, \eqref{conv-dati}--\eqref{well-prep} we know that $(\rho^\l\dt v^\l+\l^2\nabla
p^\l)_{|t=0}\rightarrow(\overline\rho\dt w+\nabla\pi)_{|t=0}$ and that $\l^2\nabla
p^\l_0$ is uniformly bounded; however we don't know whether this last term converges to
$\nabla\pi_{|t=0}$. Therefore one can't obtain in \eqref{conv-grad-q} a uniform convergence near
$t=0$. If $m\geq 6$, from \eqref{conv-H} we can obtain
$\nabla|H^\l|^2\rightarrow\nabla|B|^2$ in $C_{loc}([0,T]\times\overline  \O)$,
so that we can improve \eqref{conv-grad-q} to $\l^2\nabla
p^\l\rightarrow\nabla\pi$ in $C_{loc}((0,T]\times\overline  \O)$.
\end{remark}

\section{Proof of a preliminary a priori estimate}

In this section we drop for convenience the index $\l$. Suppose that the assumptions
of Theorem 2.2 hold. Let $a_0$, depending increasingly on $\omega^{-1}$, be such that
$0<a_0<1$ and $$ 2a_0I_7\leq A_0(u_0/\l)\leq (2a_0)^{-1}I_7\quad\inn\O,\forall\l\geq1.$$
Moreover, let $R>0$ be such that
$$
|\rho(p_0)|+|(\rho_p/\rho)(p_0)|\leq R/2\quad\inn\O,\forall\l\geq1.
$$
Let $u$ be a sufficiently smooth solution of
(1.1)-(1.3), without giving for the moment the precise definition; it is
understood that the following computation for deriving the apriori
estimate can be carried out legitimately. Given $T>0$, we assume that $u$ satisfies
\begin{equation}\label{set}
\begin{array}{ll}
a_0I_7\leq A_0(u/\l)\leq
a_0^{-1}I_7,\quad|\rho|+|\rho_p/\rho|\leq R,\\
\\
|||u|||_{5,\ast\ast,\l}\leq K\quad\inn
Q_T,\;\forall\l\geq 1.
\end{array}
\end{equation}

\medskip
\begin{lemma}\label{stimeA}
Let $u$ be a sufficiently smooth solution of (2.7)
satisfying
\eqref{set}. The matrices $A_0=A_0(u/\l)$, $A=(A_1,A_2,A_3)=A(u,u/\l)$ of (2.4), (2.5)
satisfy 
\begin{subequations}\label{stimeAi}
\begin{align}
||[A_0]||_{m,\ast\ast,\l}&\leq\l^{-1}
\Phi(R,K)|||u|||_{m,\ast\ast,\l},\label{stimaA0}\\
|||A_j|||_{m,\ast\ast,\l}&\leq
\Phi(R,K)|||u|||_{m,\ast\ast,\l}\qquad j=1,2,3,\label{stimaAj}
\\
|Div \vec A|_\infty&\leq \Phi(R,K),\label{stimaDiv}
\end{align}
\end{subequations}
for all $t\in[0,T],\,\l\geq
1$, where $Div \vec A=\dt A_0+\sum_j\dj A_j$ and where $\Phi$ is a suitable increasing
function independent of $\l$.
\end{lemma}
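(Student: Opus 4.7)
The plan is to derive all three bounds from the Moser-type product and composition calculus in the anisotropic space $\hmaa$, with careful bookkeeping of the $\lambda^{-1}$ factors that derivatives of $A_0(u/\lambda)$ produce.

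For \eqref{stimaA0} I fix an individual contribution to the seminorm $||[A_0]||_{m,\aa,\lambda}$, namely multi-indices $\gamma=(k,\alpha)$ and $h$ with $1\leq|\gamma|+2h\leq m+1$ and $|\gamma|\leq m$. A Faà di Bruno expansion of $\partial_\star^\gamma\partial_1^h A_0(u/\lambda)$ writes it as a sum of terms
$$
A_0^{(p)}(u/\lambda)\,\lambda^{-p}\prod_{i=1}^{p}\partial_\star^{\gamma_i}\partial_1^{h_i}u,\qquad p\geq 1,\;\textstyle\sum_i\gamma_i=\gamma,\;\sum_i h_i=h,
$$
with each $(\gamma_i,h_i)\neq 0$. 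Writing $\gamma_i=(k_i,\alpha_i)$ with $\sum_i k_i=k$, I redistribute the external weight $\lambda^{-k}=\prod_i\lambda^{-k_i}$ across the factors, so that each factor takes the form $\lambda^{-k_i}\partial_\star^{\gamma_i}\partial_1^{h_i}u$ that composes $|||u|||_{m,\aa,\lambda}$, while the factor $\lambda^{-p}$ leaves one spare $\lambda^{-1}$ outside the product. The $L^2$ norm of the product is then controlled by the anisotropic Moser-type inequality in $\hmaa$, placing all but the top-order factor in $L^\infty$ via the embedding $H^5_{\aa}(\Omega)\hookrightarrow L^\infty(\Omega)$ together with \eqref{set}, and $|A_0^{(p)}(u/\lambda)|_\infty$ is absorbed into $\Phi(R,K)$. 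Summing over admissible $(\gamma,h)$ gives \eqref{stimaA0}.

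The argument for \eqref{stimaAj} is identical except that $A_j$ also depends on $u$ directly, so in the Faà di Bruno expansion the derivative factors come in two flavors—those with and those without an accompanying $\lambda^{-1}$; since $\lambda\geq 1$ the latter are only more favorable, and the value $p=0$ is now allowed, accounting for the undifferentiated contribution in $|||A_j|||_{m,\aa,\lambda}$ which is bounded by $\Phi(R,K)$. Consequently no spare $\lambda^{-1}$ remains, matching the statement. For \eqref{stimaDiv}, each summand of $Div\vec A=\dt A_0+\sum_j\dj A_j$ is the chain-rule image of a single first-order derivative, so the $L^\infty$ bound reduces to proving $|\lambda^{-1}\dt u|_\infty+\sum_j|\dj u|_\infty\leq\Phi(K)$, which follows from \eqref{set} via the anisotropic Sobolev embedding $H^4_{\aa}(\Omega)\hookrightarrow L^\infty(\Omega)$, together with $|A_0'|_\infty,|A_j'|_\infty\leq\Phi(R,K)$.

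The only substantive issue is to secure the anisotropic Moser-type product and composition inequalities in $\hmaa$ that underlie the whole argument; these do not reduce to the standard estimates in $H^m$, but they are available from the characteristic-boundary calculus developed in \cite{MR1941267}. Once they are invoked, the remainder of the proof is the chain-rule and multi-index bookkeeping described above.
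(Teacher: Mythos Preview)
Your approach is correct and arrives at the same estimates, but the paper takes a different and somewhat slicker route for \eqref{stimaA0} and \eqref{stimaAj}. Instead of expanding by Fa\`a di Bruno and tracking the $\lambda$-weights by hand, the paper introduces the time rescaling $\tau=\lambda t$, under which the weighted norms become unweighted ones:
\[
||[A_0(u(t)/\lambda)]||_{m,\aa,\lambda}=||[A_0(u(\tau/\lambda)/\lambda)]||_{m,\aa},\qquad |||u(t)|||_{m,\aa,\lambda}=|||u(\tau/\lambda)|||_{m,\aa}.
\]
Then the standard composition estimate in $H^m_{\aa}$ (Lemma~\ref{nuovo3}(iii)) applies directly to $A_0(\,\cdot\,)$ evaluated at $u(\tau/\lambda)/\lambda$, yielding a bound by $|||u(\tau/\lambda)/\lambda|||_{m,\aa}=\lambda^{-1}|||u(t)|||_{m,\aa,\lambda}$; the spare $\lambda^{-1}$ thus falls out of the scaling rather than from counting factors in a chain-rule expansion. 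Your direct bookkeeping has the merit of making the mechanism transparent (each derivative of $A_0$ produces a $\lambda^{-1}$, and $p\ge1$ guarantees the surplus), while the rescaling buys a one-line reduction to the unweighted Moser calculus already proved in the appendix.

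One small correction in your treatment of \eqref{stimaDiv}: for the normal derivative $\partial_1 u$ you only get $H^{3}_{\aa}$ control from $|||u|||_{5,\aa,\lambda}\le K$ (a normal derivative costs two orders in the $\aa$-scale), not $H^4_{\aa}$; the paper records this as $\partial_1 u\in\ctmdaa$. Since $H^2_{\aa}(\Omega)\hookrightarrow C^0_B(\overline\Omega)$ in dimension three, this does not affect the conclusion.
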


\begin{proof}
By the change of scale $\tau=\l t$,
$||[A_0(u(t)/\l)]||_{m,\aa,\l}=||[A_0(u(\tau/\l)/\l)]||_{m,\aa}$,
$|||u(t)|||_{m,\aa,\l}=|||u(\tau/\l)|||_{m,\aa}$. Lemma \ref{nuovo3} and Theorem \ref{imbedding} yield (here $\ds$ 
includes $\partial/\partial\tau$) 
\begin{equation*}\label{}
\begin{array}{ll}\label{}
||[A_0(u(t)/\l)]||_{m,\aa,\l}=||[A_0(u(\tau/\l)/\l)]||_{m,\aa}\\
\\
\leq |||\ds
A_0((u(\tau/\l)/\l)|||_{m-1,\aa}+|||\duno A_0(u(\tau/\l)/\l)|||_{m-1,\ast}\\
\\
\leq \Phi(R,K)|||u(\tau/\l)/\l|||_{m,\aa}
=\Phi(R,K)|||u(t)/\l|||_{m,\aa,\l}\\
\\
\leq \l^{-1}\Phi(R,K)|||u(t)|||_{m,\aa,\l}
\end{array}
\end{equation*}
for all $\l\geq 1$. \eqref{stimaAj} follows by a similar computation and Lemma A.3. Moreover we
have $ Div \vec A={\partial A_0\over\partial u}\l^{-1}\dt u+\sum_j({\partial
A_j\over\partial u}+\l^{-1}{\partial
A_j\over\partial u/\l})\dj u$. From Lemma \ref{nuovo3} we obtain \eqref{stimaDiv} since $\l^{-1}\dt u,\ddue
u,\dtre u\in \ctmuaa$ and $\duno u\in \ctmdaa\hookrightarrow C^0_B(\overline  \O)$.
\end{proof}

\medskip

Observe that the differentiation of $A_0$ gives one $\l^{-1}$ that can be
associated to $\dt u$ and used as the weight for $\dt$ in the $H^m_\aa$ norms. This fact
shows the importance of the dependence of $A_0$ only on $u/\l$, as pointed out in \cite{MR748308}.
We will denote
\begin{equation}\label{def-norme-A}
\begin{array}{ll}
|||A|||_{m,\ast\ast,\l}=||[A_0(u(t)/\l)]||_{m,\aa,\l}+\sum_j |||A_j|||_{m,\ast\ast,\l}.
\end{array}
\end{equation}

Let $U$ be a sufficiently smooth solution of 
\begin{equation}\label{linear-equ}
\begin{array}{ll}
A_0\partial_tU
+\sum^n_{j=1}(A_j+\l C_j)\partial_jU=F
,
\end{array}
\end{equation}
where the matrix coefficients are evaluated at $u$
and $F$ is a given vector field. For convenience let us set
$\int=\int_\O dx$.

\begin{lemma}\label{lemma-stime-lin}
 Let the solution $U=(Q,V,W)$ of \eqref{linear-equ} (with $Q\in\R$ and $V,W\in\R^2$) be such that
\begin{equation}\label{linear-bc}
\begin{array}{ll}
\hbox{either }Q=0\quad\hbox{or
}V_1=0\quad\hbox{on}\quad\Sigma_T.  
\end{array}
\end{equation}
 Then 
\begin{equation}\label{stima-linear}
\begin{array}{ll}
\displaystyle a_0||U(t)||^2\leq
a_0^{-1}||U(0)||^2+\int^t_0(2\int F\cdot U+|Div \vec A|_\infty||U||^2)ds,  
\end{array}
\end{equation}
for all $t\in[0,T],\,\l\geq
1$. 
%where $Div \vec A=\dt A_0+\sum_j\dj A_j$.
\end{lemma}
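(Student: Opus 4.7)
The plan is to carry out the standard $L^2$-energy estimate for the symmetric hyperbolic system \eqref{linear-equ}, taking the scalar product with $U$, integrating over $\Omega$, and integrating by parts. The main point to exploit is the characteristic boundary condition \eqref{structure4} together with the structure of $C_1$ and the alternative \eqref{linear-bc}, which together eliminate every boundary contribution.

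First I take the inner product of \eqref{linear-equ} with $U$ and integrate on $\Omega$. Using the symmetry of $A_0$, the first term yields
\[
\int A_0\partial_tU\cdot U\,dx=\tfrac12\tfrac{d}{dt}\int A_0U\cdot U\,dx-\tfrac12\int(\partial_tA_0)U\cdot U\,dx.
\]
For each symmetric $A_j$, integration by parts and symmetry give
\[
2\int A_j\partial_jU\cdot U\,dx=\int_\Gamma(A_jU\cdot U)\nu_j\,dS-\int(\partial_jA_j)U\cdot U\,dx,
\]
and the sum over $j$ of the boundary terms reduces, since $\nu=(-1,0,0)$, to $-\int_\Gamma A_1U\cdot U\,dS$, which vanishes by \eqref{structure4} because $A_\nu=-A_1\equiv0$ on $\Sigma_T$.

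Next I treat the singular term $\lambda\sum_jC_j\partial_jU$. Since the $C_j$ are constant and symmetric, by \eqref{structure3},
\[
2\lambda\int C_j\partial_jU\cdot U\,dx=\lambda\int_\Gamma(C_jU\cdot U)\nu_j\,dS,
\]
with no volume contribution. The tangential terms ($j=2,3$) vanish by integration on $\R^2$, and the only remaining boundary term is $-\lambda\int_\Gamma C_1U\cdot U\,dS$. Reading off the structure of $C_1$ from \eqref{mhdq} (the couplings $\lambda\nabla\cdot v$ in the pressure equation and $\lambda\nabla q$ in the momentum equation), one has $C_1U\cdot U=2QV_1$, which vanishes identically on $\Sigma_T$ under the hypothesis \eqref{linear-bc}. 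This is the delicate point: without this cancellation the $\lambda$-factor would destroy any uniform estimate as $\lambda\to\infty$, so the precise matching between the structure of $C_1$ and the admissible boundary condition is essential.

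Collecting the identities yields
\[
\tfrac{d}{dt}\int A_0U\cdot U\,dx=\int(Div\,\vec A)U\cdot U\,dx+2\int F\cdot U\,dx.
\]
Integrating from $0$ to $t$ and using the coercivity $a_0I_7\le A_0\le a_0^{-1}I_7$ from \eqref{set} to bound $a_0\|U(t)\|^2\le\int A_0U(t)\cdot U(t)\,dx$ from below and $\int A_0U(0)\cdot U(0)\,dx\le a_0^{-1}\|U(0)\|^2$ from above, together with the crude bound $\int(Div\,\vec A)U\cdot U\,dx\le|Div\,\vec A|_\infty\|U\|^2$, gives exactly \eqref{stima-linear}. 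The whole argument is routine once the two boundary cancellations above are in place; the only real obstacle is verifying that both the $A_1$-boundary term and the $\lambda C_1$-boundary term disappear, which is what \eqref{structure4} and \eqref{linear-bc} are designed to guarantee.
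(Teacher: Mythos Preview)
Your proof is correct and follows essentially the same route as the paper's. The only cosmetic difference is that you treat the boundary contributions from $A_1$ and $\lambda C_1$ separately (using \eqref{structure4} for the former and the explicit computation $C_1U\cdot U=2QV_1$ for the latter), whereas the paper writes the combined boundary term $(A_1+\lambda C_1)U\cdot U=2\lambda QV_1$ directly on $\Sigma_T$; the underlying mechanism and the resulting energy identity are identical.
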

\begin{proof}
We multiply \eqref{linear-equ} by $U$ and integrate over $\Omega$. An
integration
by parts and standard calculations give
\begin{equation}\label{balance}
\begin{array}{ll}
{d\over dt}\int A_0U\cdot U=\int_{\partial\Omega}(A_1+\l C_1)U\cdot
U+\int(2F+Div \vec A\,U)\cdot
U
\end{array}
\end{equation}
Here we use $\dj(\l C_j)=0$. In our case
$$
(A_1+\l C_1)U\cdot U=2\l QV_1\quad\hbox{on}\quad\Sigma_T.$$
Thus, under \eqref{linear-bc}, the boundary integral in \eqref{balance} vanishes and \eqref{stima-linear} readily
follows. 
\end{proof}

\medskip
We use Lemma \ref{lemma-stime-lin} to get the apriori estimate in $\hmaa$.
\medskip

\begin{lemma}\label{lemma-stime-tang}
Let $m\ge5$. Let $u$ be a sufficiently smooth solution of (2.7)
satisfying
(3.1) and $\a=(\a_0,\dots,\a_3)$ a
multi-index with
$|\a|\leq m$ ($\a_0$ stands for the index of $\dt$). Then $\partial^\alpha_\star u$ satisfies
\begin{equation}\label{}
\begin{array}{ll}\label{stima-tang}
a_0||\l^{-\a_0}\partial^\alpha_\star u(t)||^2\leq
a_0^{-1}||\l^{-\a_0}\partial^\alpha_\star u(0)||^2+\\
\displaystyle +C\int^t_0 \{\Phi(K) ||\l^{-\a_0}\partial^\alpha_\star
u||^2+|||A|||_{m,\ast\ast,\l}|||u|||^2_{m,\ast\ast,\l}+|||u|||_{m,\ast\ast,\l}|||\l\du
q|||_{m-1,\ast,\l}\}ds,
\end{array}
\end{equation}
for all $t\in[0,T],\,\l\geq
1$, where the increasing function $\Phi$ is independent of
$\l$.
\end{lemma}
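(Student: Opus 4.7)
The plan is to apply the weighted conormal derivative $\lambda^{-\alpha_0}\partial^\alpha_\star$ to the equation $Lu=0$ of (2.7), derive a perturbed symmetric hyperbolic equation for $U:=\lambda^{-\alpha_0}\partial^\alpha_\star u$ of the form (3.6), and invoke the basic $L^2$ estimate of Lemma 3.2. Since $C_j$ is constant by (2.5c), $U$ satisfies $A_0\partial_t U+\sum_{j=1}^3(A_j+\lambda C_j)\partial_j U=F$ with $F=-\lambda^{-\alpha_0}\bigl([\partial^\alpha_\star,A_0]\partial_t u+\sum_{j}[\partial^\alpha_\star,A_j]\partial_j u+\lambda\sum_{j}C_j[\partial^\alpha_\star,\partial_j]u\bigr)$. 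The boundary condition (3.7) is met: $\partial_t,\partial_2,\partial_3$ are tangent to $\Sigma_T$, while $\sigma(0)=0$ forces $\sigma(x_1)\partial_1$ to preserve the trace-zero property, so $\partial^\alpha_\star v_1=0$ on $\Sigma_T$, i.e.\ $V_1=0$. Lemma 3.2 then yields $a_0\|U(t)\|^2\le a_0^{-1}\|U(0)\|^2+\int_0^t\bigl(2\int F\cdot U+|Div\vec A|_\infty\|U\|^2\bigr)ds$, and (3.3c) converts the last term into $\Phi(R,K)\|\lambda^{-\alpha_0}\partial^\alpha_\star u\|^2$, which is exactly the first term of (3.8).

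The variable-coefficient commutators $[\partial^\alpha_\star,A_0]\partial_t u$ and $[\partial^\alpha_\star,A_j]\partial_j u$ are treated by Moser-type inequalities in the anisotropic spaces $H^m_{\ast\ast}$, in the spirit of the tools used for Lemma 3.1; pairing with $U$ and using Cauchy--Schwarz yields a pointwise bound of the form $|||A|||_{m,\ast\ast,\lambda}|||u|||^2_{m,\ast\ast,\lambda}$, producing the middle term of (3.8) after time integration.

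The delicate point is the singular commutator $\lambda\sum_{j}C_j[\partial^\alpha_\star,\partial_j]u$. Since $\partial_t,\partial_2,\partial_3$ commute with each $\partial_j$, only the commutator $[(\sigma(x_1)\partial_1)^{\alpha_1},\partial_1]$ survives; it is nonzero only for $j=1$ and $\alpha_1\ge 1$, and iterating $[\sigma\partial_1,\partial_1]=-\sigma'\partial_1$ shows it is a sum of conormal operators of total order $|\alpha|$ applied to $\partial_1 u$. The decisive structural observation is that the matrix $C_1$ read off from (2.4) has nonzero entries only in the $(q,v_1)$-block; thus $\lambda C_1[\partial^\alpha_\star,\partial_1]u$ involves only $\lambda\partial_1 q$ and $\lambda\partial_1 v_1$. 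The $\lambda\partial_1 v_1$ piece is eliminated by an integration by parts in $x_1$: the boundary term vanishes owing to $\partial^\alpha_\star v_1|_{\Sigma_T}=0$ together with the symmetry of $C_1$, and the derivative is transferred onto the $q$-slot. What remains is controlled by $|||u|||_{m,\ast\ast,\lambda}|||\lambda\partial_1 q|||_{m-1,\ast,\lambda}$, the third term of (3.8). Summing all contributions gives the asserted estimate.

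The chief obstacle is precisely this last step: a naive estimate of $\lambda C_1[\partial^\alpha_\star,\partial_1]u$ would leave the large factor $\lambda$ multiplying $\partial_1 u$, which is not uniformly bounded as $\lambda\to\infty$. Uniformity is salvaged only by the interplay of the structural properties (2.5c)--(2.5d) (constant symmetric $C_j$, characteristic boundary $A_\nu\equiv 0$), the sparse $(q,v_1)$-coupling of $C_1$, and the vanishing of $\partial^\alpha_\star v_1$ at $\Sigma_T$; this is exactly the structural principle emphasized in \cite{MR748308} and exploited in \cite{MR1941267}.
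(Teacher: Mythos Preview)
Your overall strategy matches the paper's: apply $\lambda^{-\alpha_0}\partial^\alpha_\star$ to (2.6), verify $V_1=0$ on $\Sigma_T$, invoke Lemma~3.2, and estimate the commutator $F_\alpha$. The handling of the $A_0$- and $A_2,A_3$-commutators via anisotropic Moser inequalities is also in line with the paper (though the paper singles out $j=1$, where $\partial_1 u\in\mathcal C_T(H^{m-1}_\ast)$ only, forcing the use of Lemma~A.4(i) rather than (ii); this is a minor but nontrivial point you gloss over).

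The genuine gap is in your treatment of the singular term $\lambda\int C_1[\partial^\alpha_\star,\partial_1]u\cdot\partial^\alpha_\star u$. Integration by parts in $x_1$, as you propose, does not give the right derivative count. Writing the commutator as a sum of terms $(\sigma')^{\alpha_1-h}(\sigma\partial_1)^h\partial_\sharp^\alpha\partial_1 u$ with $h\le\alpha_1-1$, the $q$-row pairs $\lambda(\sigma\partial_1)^h\partial_\sharp^\alpha\partial_1 v_1$ against $(\sigma\partial_1)^{\alpha_1}\partial_\sharp^\alpha q$. If you move the raw $\partial_1$ onto the $q$-factor, you produce $\partial_1(\sigma\partial_1)^{\alpha_1}\partial_\sharp^\alpha q=(\sigma'+\sigma\partial_1)^{\alpha_1}\partial_\sharp^\alpha\partial_1 q$, which carries $|\alpha|\le m$ tangential derivatives of $\lambda\partial_1 q$ rather than $m-1$; and when $\partial_1$ hits the bounded weights $(\sigma')^{\alpha_1-h}$ you are left with a bare $\lambda$ times $|||q|||_{m,\ast}$. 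Neither is controlled by the subsequent Lemma~3.6, which only bounds $|||\lambda\partial_1 q|||_{m-1,\ast\ast,\lambda}$, so the argument does not close.

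The paper's mechanism is algebraic, with no integration by parts. Using the identity $(\sigma\partial_1)^{k}(\sigma U)=\sigma(\sigma'+\sigma\partial_1)^{k}U$ from (3.15), one rewrites $(\sigma\partial_1)^{\alpha_1}\partial_\sharp^\alpha q=\sigma\cdot(\sigma'+\sigma\partial_1)^{\alpha_1-1}\partial_\sharp^\alpha\partial_1 q$, extracting a factor of $\sigma$ from the $q$-side. That $\sigma$ is then moved, as a pointwise multiplier, to the $v_1$-side, converting the raw $\lambda\partial_1 v_1$ into the tangential $\sigma\partial_1 v_1$. The net effect is that $\lambda$ ends up attached to $\partial_1 q$ carrying only $|\alpha|-1\le m-1$ tangential derivatives, giving precisely $|||v_1|||_{m,\ast,\lambda}\,|||\lambda\partial_1 q|||_{m-1,\ast,\lambda}$ as in (3.18). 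The ``shifting of $\sigma$'' is the crux you are missing; the boundary condition and the symmetry of $C_1$ play no role in this step.
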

\begin{proof}
We apply $\l^{-\a_0}\partial^\alpha_\star$ to \eqref{compact} and obtain \eqref{linear-equ} with
$U=U_\a=\l^{-\a_0}\partial^\alpha_\star u$ and
\begin{equation}\label{commut-tang}
\begin{array}{ll}

F=F_\a=-\l^{-\a_0}[\partial^\alpha_\star,A_0\dt]u-\l^{-\a_0}[\partial^\alpha_\star,A_j\dj]u-\l[\da_\star,C_1\du]u.

\end{array}
\end{equation}
Concerning the last term in \eqref{commut-tang} we observe that this term doesn't vanish
even if the
$C_j$ are constant because $[\sigma(x_1)\du,\du]\not=0$. On the contrary we have
$[\partial^\alpha_\star,C_j\dj]=0$ if $j\not=1$.
We observe that, by tangential differentiation along $\Gamma$ of the boundary
condition $v_1=0$,
we can obtain
$
\partial^\alpha_\star v_1=0$ on $\Sigma_T$. Thus we can obtain the estimate for $\l^{-\a_0}\partial^\alpha_\star u$
from Lemma
\ref{lemma-stime-lin}. It is enough to estimate $F_\a$ given by \eqref{commut-tang}. 

Expanding the first commutator we can write
\begin{equation}\label{exp-comm-A0}
\begin{array}{ll}

[\partial^\alpha_\star,A_0\dt]u =\ds^{\a-1}(\ds A_0\dt u)+\ds^{\a-2}(\ds A_0\ds\dt u)+\dots +\ds A_0\ds^{\a-1}\dt u,
\end{array}
\end{equation}
where $\ds^{\a-1}$ means $\ds^\b$ for some multi-index $\b$ with
$\b_i\leq\a_i$ and
$|\b|=|\a|-1$, and so on. Since
$u\in\ctmaa$ and
$\ds A_0\in\ctmuaa$ then, from Lemma \ref{prodotto2} (ii), with $n=3$, we have $\ds A_0\dt
u\in\ctmuaa$, $\ds A_0\ds\dt
u\in\ctmdaa$, $\ds A_0\ds^2\dt u\in\ctmtaa$, and so on (with corresponding multiplicative
inequalities), provided that $m-1\ge4$. Again the differentiation of $A_0$ gives one more $\l^{-1}$ which is then
associated to $\dt u$. Then from \eqref{prodottogen2}, we get
\begin{equation}\label{commutA0}
\begin{array}{ll}

||\l^{-\a_0}[\partial^\alpha_\star,A_0\dt]u||\leq C||
[A_0]||_{m,\ast\ast,\l}|||u|||_{m,\ast\ast,\l},\qquad
t\in[0,T],
\end{array}
\end{equation}
provided that $m-1\ge4$.
%where $\Phi(|||u|||_{m-1,\aa,\l})$ is independent of $\l$. 
Analogously we treat the commutator terms
when $j=2,3$
and obtain
\begin{equation}\label{commutAj}
\begin{array}{ll}

||\l^{-\a_0}[\partial^\alpha_\star,A_j\dj]u||\leq C|||A_j|||_{m,\ast\ast,\l}|||u|||_{m,\ast\ast,\l},\quad
t\in[0,T],\quad j=2,3,
\end{array}
\end{equation}
again provided $m\ge5$.
In case $j=1$ we have
%%%
\begin{equation}\label{exp-commutA1}
\begin{array}{ll}

[\partial^\alpha_\star,A_1\du]u=\ds^{\a-1}(\ds A_1\du u)+  \ds^{\a-2}(\ds A_1\ds\du u)+\dots +\ds A_1\ds^{\a-1}\du u.
\end{array}
\end{equation}
%%%
If we argument as above we can't obtain the desired estimate because we
have $\ds
A_1\in\ctmuaa,\du u\in\ctmdaa$ which yield $\ds A_1\du u\in\ctmdaa$ from
Lemma \ref{prodotto2} (ii).
Differently, we consider that $u\in\ctmaa$ gives $\du u\in\ctmua$. Then
Lemma \ref{prodotto2} (i)
yields 
%%%%
$\ds A_1\du
u\in\ctmua$, $\ds A_1\ds\du
u\in\ctmda$, $\ds A_1\ds^2\du u\in\ctmta$, and so on. 
%%%%
Then from \eqref{prodottogen3},  we get
\begin{equation}\label{commutA1}
\begin{array}{ll}

||\l^{-\a_0}[\partial^\alpha_\star,A_1\du]u||\leq C|||A_1|||_{m,\ast\ast,\l}|||u|||_{m,\ast\ast,\l},\qquad
t\in[0,T],
\end{array}
\end{equation}
again provided $m\ge5$.

Finally we consider the commutator arising from the large operator $\l
C_j\dj$, which
must be treated more carefully. For, it doesn't work to estimate directly
$||\l[\da_\ast,C_1\du]u||$ but we have to consider the integral below. In
what follows
we use the following simple facts ($\sigma=\sigma(x_1)$):
\begin{equation}\label{commut-sigma}
\begin{array}{ll}

\du(\sigma\du)^k=(\sigma'+\sigma\du)^k\du, \qquad
(\sigma\du)^k(\sigma U)=\sigma(\sigma'+\sigma\du)^kU.
\end{array}
\end{equation}
Set 
\[\dsharp^\a=\l^{-\a_0}\dt^{\a_0}\dd^{\a_2}\dtr^{\a_3}, \qquad w=\dsharp^\a u.
\]
We have
\begin{equation*}\label{}
\begin{array}{ll}
\displaystyle  \l\int[\l^{-\a_0}\da_\star,C_1\du]u\cdot\l^{-\a_0}\da_\star u=\l
\int[(\sigma\du)^{\a_1}\du-\du(\sigma\du)^{\a_1}]C_1w\cdot(\sigma\du)^{\a_1}w\\
\displaystyle =\l
\int[(\sigma\du)^{\a_1}-(\sigma'+\sigma\du)^{\a_1}]C_1\du w\cdot(\sigma\du)^{\a_1}w\\
\displaystyle =-\l \sum_{h=0}^{\a_1-1}{\a_1\choose
h}\int(\sigma')^{\a_1-h}(\sigma\du)^h
C_1\du w\cdot
(\sigma\du)^{\a_1}w\\
\displaystyle =-\sum_{h=0}^{\a_1-1}{\a_1\choose
h}\int(\sigma')^{\a_1-h}(\sigma\du)^h
\dsharp^\a{\l\du v_1\choose\l\du q}\cdot
\dsharp^\a
(\sigma\du)^{\a_1-1}{\sigma\du q\choose\sigma\du v_1}. 
\end{array}
\end{equation*}
Now we use the second relation in \eqref{commut-sigma} in the first row of the last
scalar product and
obtain
\begin{equation}\label{3.13}
\begin{array}{ll}
\displaystyle -\sum_{h=0}^{\a_1-1}{\a_1\choose
h}\int(\sigma')^{\a_1-h}\{(\sigma\du)^{h+1}
\dsharp^\a v_1\cdot
\dsharp^\a
(\sigma'+\sigma\du)^{\a_1-1}(\l\du q)+\cr
\displaystyle  \qquad \qquad +(\sigma\du)^h
\dsharp^\a(\l\du q)\cdot\l^{-\a_0}
\da_\star v_1 \}.

\end{array}
\end{equation}
We estimate \eqref{3.13} and obtain
\begin{equation}\label{3.14}
\begin{array}{ll}

|\l\int[\l^{-\a_0}\da_\star,C_1\du]u\cdot\l^{-\a_0}\da_\star u|\leq C|||v_1|||_{m,\ast,\l}|||\l\du
q|||_{m-1,\ast,\l},\quad
t\in[0,T].
\end{array}
\end{equation}
From Lemma \ref{lemma-stime-lin} applied to $\l^{-\a_0}\partial^\alpha_\star u$ and \eqref{stimeAi}, \eqref{commutA0}--\eqref{commutA1}, \eqref{3.14} we obtain
\eqref{stima-tang}. 
\end{proof}

%%%%%%%%%%%%%%%%%%%%%%%%%%%%%%%%%%%%%%%%
\bigskip
%%%%%%%%%%%%%%%%%%%%%%%%%%%%%%%%%%%%%%%%

\begin{lemma}\label{lemma-tang-1-norm}
Let $m\ge6$. Let $u$ be a sufficiently smooth solution of \eqref{ibvp}
satisfying
\eqref{set} and $\b=(\b_0,\dots,\b_3)$ a
multi-index with
$|\b|\leq m-1$. Then $\partial^\b_\star\du u$ satisfies
\begin{equation}\label{tang-1-norm}
\begin{array}{ll}
\displaystyle a_0||\l^{-\b_0}\partial^\b_\star\du u(t)||^2\leq
a_0^{-1}||\l^{-\b_0}\partial^\b_\star\du u(0)||^2+\\
\displaystyle   +C\int^t_0\{\Phi(K) ||\l^{-\b_0}\partial^\b_\star\du
u||^2+|||A|||_{m,\ast\ast,\l}|||u|||_{m,\ast\ast,\l}^2+|||\du v_1|||_{m-1,\ast\ast,\l}|||\l\du
q|||_{m-1,\ast,\l}\}ds,
\end{array}
\end{equation}
for all $t\in[0,T],\,\l\geq
1$, where the increasing function $\Phi$ is independent of $\l$.
\end{lemma}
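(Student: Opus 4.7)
The approach is to follow the scheme of the proof of Lemma \ref{lemma-stime-tang} line by line. Apply $\l^{-\b_0}\partial^\b_\star\du$ to \eqref{compact}, so that $U:=\l^{-\b_0}\partial^\b_\star\du u$ satisfies \eqref{linear-equ} with forcing
\[
F_\b=-\l^{-\b_0}[\partial^\b_\star\du,A_0\dt]u-\sum_{j=1}^3\l^{-\b_0}[\partial^\b_\star\du,A_j\dj]u-\l[\l^{-\b_0}\partial^\b_\star\du,C_1\du]u,
\]
the remaining $\l C_j\dj$ commutators for $j=2,3$ being zero by constancy of $C_j$. I then apply Lemma \ref{lemma-stime-lin} to $U$ and estimate $F_\b$ in the spirit of the tangential case. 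Two genuinely new points distinguish this argument from that of Lemma \ref{lemma-stime-tang}.

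The first, and more delicate, point is the verification of the boundary condition required by Lemma \ref{lemma-stime-lin}: writing $U=(Q,V,W)$, neither $Q:=\l^{-\b_0}\partial^\b_\star\du q$ nor $V_1:=\l^{-\b_0}\partial^\b_\star\du v_1$ vanishes on $\Sigma_T$ for elementary reasons. I claim $Q|_{\Sigma_T}=0$ in all cases. When $\b_1\geq 1$, the operator $\partial^\b_\star$ carries a factor $(\sigma\du)^{\b_1}$; since $\sigma(0)=0$, a short induction shows $(\sigma\du)^k f|_\Gamma=0$ for every smooth $f$ and every $k\geq 1$, so $Q|_{\Sigma_T}=0$ at once. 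When $\b_1=0$, I take the trace on $\Sigma_T$ of the first scalar component of \eqref{mhd22},
\[
\rho(\dt+v\cdot\nabla)v_1+\l\du q-(H\cdot\nabla)H_1=0.
\]
The boundary conditions $v_1=0$ and $H_1=0$ are identities in $(t,x_2,x_3)$, so $\dt v_1=\dd v_1=\dtr v_1=0$ and $\dd H_1=\dtr H_1=0$ on $\Sigma_T$, while $v_1\du v_1$ and $H_1\du H_1$ vanish by pointwise evaluation; hence $\l\du q|_{\Sigma_T}=0$. Since for $\b_1=0$ the operator $\l^{-\b_0}\partial^\b_\star=\l^{-\b_0}\dt^{\b_0}\dd^{\b_2}\dtr^{\b_3}$ is purely tangential to $\Sigma_T$, this identity is preserved, yielding $Q|_{\Sigma_T}=0$. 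Lemma \ref{lemma-stime-lin} therefore applies with the alternative $Q=0$.

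The second new point is routine but structural. The commutators with $A_0\dt$ and $A_j\dj$, $j=1,2,3$, are estimated exactly as in \eqref{commutA0}--\eqref{commutA1}, with $\du u\in\ctmua$ playing the role of $u$; the assumptions $|\b|\leq m-1$ and $m\geq 6$ keep all factors within the hypotheses of the appendix product rules, yielding a total contribution bounded by $C|||A|||_{m,\aa,\l}|||u|||^2_{m,\aa,\l}$ after pairing with $U$. For the large-$\l$ commutator I use $[\du,C_1\du]=0$ to rewrite $[\partial^\b_\star\du,C_1\du]=[\partial^\b_\star,C_1\du]\du$, a commutator structurally identical to the one handled in \eqref{3.13}--\eqref{3.14} but with $\du u$ in place of $u$. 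Replaying verbatim the integration by parts based on \eqref{commut-sigma} yields
\[
\Big|\l\int[\l^{-\b_0}\partial^\b_\star\du,C_1\du]u\cdot\l^{-\b_0}\partial^\b_\star\du u\,dx\Big|\leq C|||\du v_1|||_{m-1,\aa,\l}\,|||\l\du q|||_{m-1,\ast,\l},
\]
the drop from $m$ to $m-1$ in the first factor reflecting the $\du$ already spent. Combining with Lemma \ref{lemma-stime-lin} and integrating in time yields \eqref{tang-1-norm}. The true obstacle in the scheme is the first step above: trading the unavailable tangential boundary condition $V_1=0$ for $Q=0$ via the algebraic identity $\l\du q|_{\Sigma_T}=0$ extracted from the first scalar momentum equation; once this observation is secured, the rest of the argument is a faithful adaptation of the tangential energy estimate.
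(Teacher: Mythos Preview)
Your overall route coincides with the paper's: differentiate \eqref{compact} by $\l^{-\b_0}\partial^\b_\star\du$, use the first component of \eqref{mhd22} together with $v_1=H_1=0$ on $\Sigma_T$ to obtain $\du q|_{\Sigma_T}=0$ and hence $Q=\l^{-\b_0}\partial^\b_\star\du q|_{\Sigma_T}=0$, apply Lemma \ref{lemma-stime-lin}, and control the commutators. The identification of the boundary condition is indeed the crucial new step, and you have it right.

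There is, however, a genuine gap in your treatment of the $A_1$ commutator. The claim that $[\partial^\b_\star\du,A_1\du]u$ is handled ``exactly as in \eqref{commutA0}--\eqref{commutA1} with $\du u$ in place of $u$'' fails: expanding the commutator produces terms of the form $\ds^{\b-k}(\ds A_1\,\ds^{k-1}\du^2 u)$, and since $u\in\ctmaa$ gives only $\du^2 u\in\cC_T(H^{m-3}_\ast)$, the product $\ds A_1\,\du^2 u$ lands one tangential derivative short of what is needed when $|\b|=m-1$. The paper closes this using the structural property \eqref{structure4}: from $A_1|_{\Sigma_T}=0$ one gets $\ds A_1|_{\Sigma_T}=0$, hence $\ds A_1/\sigma\in\ctmda$ by \eqref{hoH2}, and one rewrites $\ds A_1\,\du^2 u=(\ds A_1/\sigma)\,(\sigma\du)\du u$ with $(\sigma\du)\du u\in\ctmda$. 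This device is precisely what forces $m-2\ge4$, i.e.\ $m\ge6$, here rather than the $m\ge5$ sufficient in Lemma \ref{lemma-stime-tang}; it is not available by merely ``replaying'' the tangential argument.

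A smaller gap concerns the $\l C_1$ commutator: a literal replay of \eqref{3.13}--\eqref{3.14} with $\du u$ substituted for $u$ leaves the extra normal derivative on the $q$ side, producing $\l\du^2 q$ with up to $m-2$ tangential derivatives, which is not controlled by $|||\l\du q|||_{m-1,\ast,\l}$. The paper's computation \eqref{3.21} adds one step absent from the tangential case: in the second component of the pairing a factor of $\sigma$ is moved across the integral so that the spare $\du$ lands on $\du v_1$ instead, yielding the asymmetric bound $|||\du v_1|||_{m-1,\ast\ast,\l}\,|||\l\du q|||_{m-1,\ast,\l}$ that you quote but do not derive.
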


\begin{proof}
We apply $\l^{-\b_0}\partial^\b_\star\du$ to \eqref{compact} and obtain \eqref{linear-equ} with
$U=U_\b=\l^{-\b_0}\partial^\b_\star\du u$ and
\begin{equation}\label{commut-tang-1-norm}
\begin{array}{ll}

F=F_\b=-\l^{-\b_0}[\partial^\b_\star\du,A_0\dt]u-\l^{-\b_0}[\partial^\b_\star\du,A_j\dj]u-\l[\db_\star\du,C_1\du]u.

\end{array}
\end{equation}
From \eqref{mhd22} we have that
\begin{equation}\label{duno q}
\begin{array}{ll}
\l\du q=-\rho(\dt v_1+v\cdot\nabla v_1)+H\cdot\nabla H_1.
\end{array}
\end{equation}
From the
boundary condition
$v_1=H_1=0$ at $\Sigma_T$, it follows that $\du q=0$ at $\Sigma_T$. By tangential
differentiation along
$\Gamma$, we can obtain
$$
\partial^\b_\star\du q=0\quad\hbox{on}\,\Sigma_T.$$
Thus an $L^2$ estimate of $\l^{-\b_0}\partial^\b_\star\du u$ follows by application of Lemma
\ref{lemma-stime-lin}. Again it
suffices to estimate $F_\b$ given by \eqref{commut-tang-1-norm}. Expanding the first commutator we can write
$$
[\partial^\beta_\star\duno,A_0\dt]u =\ds^\b(\duno A_0\dt u)+\ds^{\b-1}(\ds A_0\duno\dt u)+\dots +\ds A_0\ds^{\b-1}\duno\dt u,
$$
where $\ds^{\b-1}$ means $\ds^\gamma$ for some multi-index $\gamma$ with
$\gamma_i\leq\b_i$ and
$|\gamma|=|\b|-1$, and so on. 
%%%%
Since 
$\du A_0\in\ctmua$ and $\dt u\in\ctmuaa$, then, from Lemma \ref{prodotto2} (i) with $n=3$, we have $\du A_0\dt
u\in\ctmua$ (with corresponding multiplicative
inequalities), provided $m-1\ge4$. 
Since 
$\ds A_0\in\ctmuaa$ and $\du\dt u\in\ctmda$, then from Lemma \ref{prodotto2} (i) we have $\ds A_0\du\dt
u\in\ctmda$, $\ds A_0\ds\du\dt
u\in\ctmta$, and so on, provided $m-1\ge4$. 
Again the differentiation of $A_0$ gives one more $\l^{-1}$ which is then
associated to $\dt u$. Then from \eqref{prodottogen3} we get
\begin{equation}\label{commutB0}
\begin{array}{ll}
||\l^{-\b_0}[\partial^\b_\star\du,A_0\dt]u||\leq C||
[A_0]||_{m,\ast\ast,\l}|||u|||_{m,\ast\ast,\l},\qquad
t\in[0,T],
\end{array}
\end{equation}
provided $m-1\ge4$.
Analogously we obtain
\begin{equation}\label{commutBj}
\begin{array}{ll}
||\l^{-\b_0}[\partial^\b_\star\du,A_j\dj]u||\leq C||
|A_j|||_{m,\ast\ast,\l}|||u|||_{m,\ast\ast,\l},\quad
t\in[0,T],
\quad
j=2,3.
\end{array}
\end{equation}
%%%%%%%%%%%%%%%%%%
Expanding the commutator with $A_1$ we can write
$$
[\partial^\beta_\star\duno,A_1\du]u =\ds^\b(\duno A_1\du u)+\ds^{\b-1}(\ds A_1\duno^2 u)+\ds^{\b-2}(\ds A_1\ds\duno^2 u)+\dots +\ds A_1\ds^{\b-1}\duno^2 u.
$$
Concerning the first term in the right hand side, as $\du A_1\in\ctmua$ and $\du u\in\ctmua$ we obtain from Lemma \ref{prodotto} that $\du A_1\du u\in\ctmua$,
provided $m-1\ge4$.
Concerning the other terms in the expansion, we first consider that
$A_1=0$ on $\Sigma_T$ yields,
by tangential
differentiation, $\ds A_1=0$ on $\Sigma_T$. Then we have $\ds A_1/\sigma\in\ctmda$ from \eqref{hoH2}. Since$(\sigma\du)\du u\in\ctmda$ we obtain $(\ds A_1/\sigma)(\sigma\du)\du u\in\ctmda$, $(\ds A_1/\sigma)\ds(\sigma\du)\du u\in\ctmta$, \dots,  $(\ds A_1/\sigma)\ds^{\b-1}(\sigma\du)\du u\in \cC_T(L^2)$ (with corresponding multiplicative
inequalities), provided that $m-2\ge4$.
Thus it follows that
\begin{equation}\label{commutB1}
\begin{array}{ll}
||\l^{-\b_0}[\partial^\b_\star\du,A_1\du]u||\leq
C ||
|A_1|||_{m,\ast\ast,\l}|||u|||_{m,\ast\ast,\l},\quad
t\in[0,T],
\end{array}
\end{equation}
provided that $m\ge6$.

%%%%%%%%%%%%%%%%%%%%%%%%%%%%%%%%%%%%%%%%%%%%%%
%%%%%%%%%%%%%%%%%%%%%%%%%%%%%%%%%%%%%%%%%%

Finally we consider the commutator arising from the large operator.
Set 
\[\dsharp^\b=\l^{-\b_0}\dt^{\b_0}\dd^{\b_2}\dtr^{\b_3}, \qquad  z=\dsharp^\b\du u.
\]
We have
\begin{equation}\label{3.21}
\begin{array}{ll}
\displaystyle  \l\int[\l^{-\b_0}\db_\star\du,C_1\du]u\cdot\l^{-\b_0}\db_\star\du u=\l
\int[(\sigma\du)^{\b_1}\du-\du(\sigma\du)^{\b_1}]C_1z\cdot(\sigma\du)^{\b_1}z \\
\displaystyle  =\l
\int[(\sigma\du)^{\b_1}-(\sigma'+\sigma\du)^{\b_1}]\du C_1z\cdot(\sigma\du)^{\b_1}z \\
\displaystyle  =-\l \sum_{h=0}^{\b_1-1}{\b_1\choose
h}\int(\sigma')^{\b_1-h}(\sigma\du)^h
\du C_1z\cdot
(\sigma\du)^{\b_1}z 
\\
\displaystyle  =-\sum_{h=0}^{\b_1-1}{\b_1\choose
h}\int(\sigma')^{\b_1-h}(\sigma\du)^h
\dsharp^\b\du{\du v_1\choose\l\du q}\cdot
\l^{-\b_0}\db_\star{\l\du q\choose\du v_1}
\\
\displaystyle  =-\sum_{h=0}^{\b_1-1}{\b_1\choose
h}\int(\sigma')^{\b_1-h}\big\{(\sigma\du)^{h}\dsharp^\b
\du (\du v_1)\cdot
\l^{-\b_0}\db_\star(\l\du q)\\
\displaystyle \qquad\qquad\qquad\qquad\quad +(\sigma\du)^{h+1}
\dsharp^\b(\l\du q)\cdot
\du(\sigma\du)^{\b_1-1}\dsharp^\b (\du v_1) \big\}.

\end{array}
\end{equation}
Now we estimate \eqref{3.21} and obtain
\begin{equation}\label{3.22}
\begin{array}{ll}
|\l\int[\l^{-\b_0}\db_\star\du,C_1\du]u\cdot\l^{-\b_0}\db_\star\du u|\leq
C|||\du v_1|||_{m-1,\ast\ast,\l}|||\l\du
q|||_{m-1,\ast,\l}.
\end{array}
\end{equation}
From Lemma \ref{lemma-stime-lin} applied to $\l^{-\b_0}\db_\ast\du u$ and \eqref{stimeAi}, \eqref{commutB0}--\eqref{commutB1} and \eqref{3.22} we obtain
\eqref{tang-1-norm}.
\end{proof}

%%%%%%%%%%%%%%%%%%%%%%%%%%%%%%%%%%%%%%%%%%%%%%%%%%%%%%%%%%%%%%%%%%%%%%%%%%%%%%%%%%%%%%%%%%%%%%%%%%%%%%%%%%%%%%%%%%%%

\bigskip
We decompose $u$ as $u=(u^{I},u^{I\! I})$ where $u^{I}=(q,v_1)$ and $u^{I\!
I}=(v_2,v_3,H_1,H_2,H_3)$. Accordingly we write the matrix coefficients of
$L$ in block
form as
$$
A_0=
\begin{pmatrix}
A_0^{II}& A_0^{I\,I\! I}\cr  A_0^{I\! I\,I}& A_0^{I\! I\,I\!
I}
\end{pmatrix}
$$
and similarly for $A_j,C_j,j=1,2,3$. Now we estimate $\du u^{I\! I}$. We consider the
rows 3 to 7 of \eqref{mhdq} that we
write as
\begin{equation}\label{3.25}
\begin{array}{ll}
(A_0^{I\! I\,I\! I}\dt+\sum_{j=1}^3A_j^{I\! I\,I\! I}\dj)u^{I\! I}=G
\end{array}
\end{equation}
with
\begin{equation}\label{3.26}
\begin{array}{ll}\displaystyle
G=\begin{pmatrix}
G_1 \\
 G_2
\end{pmatrix},\quad 
G_1=\begin{pmatrix}
 -\l\dd q \\
-\l\dtr q
\end{pmatrix},\quad
G_2=\frac{\rho_p}{\rho}\frac{H}{\l}(\dt q+v\cdot\nabla q)+\begin{pmatrix}
 H\cdot\nabla
v_1 \\
 0\\
 0 
\end{pmatrix}
\end{array}
\end{equation}

%%%%%%%%%%%%%%%%%%%%%%%%%%%%%%%%%%%%%%%%

\begin{lemma}\label{lemma-tang-k-norm}

Let $m\ge6$. Let $u$ be a sufficiently smooth solution of \eqref{ibvp}
satisfying
\eqref{set}, decomposed as
above
as $u=(u^{I},u^{I\! I})$. Then $u^{I\! I}$ satisfies
\begin{equation}\label{tang-k-norm}
\begin{array}{ll}
\displaystyle  a_0||\l^{-\gamma_0}\dg_\star\du^k u^{I\! I}(t)||^2\leq
a_0^{-1}||\l^{-\gamma_0}\dg_\star\du^k u^{I\! I}(0)||^2
+C\int^t_0 \{\Phi(K)||\l^{-\gamma_0}\dg_\star\du^k u^{I\!
I}||^2\\
\displaystyle  \quad + (1+|||A|||_{m,\ast\ast,\l})\left(|||u|||_{m,\ast\ast,\l}+|||\l\du
q|||_{m-1,\ast\ast,\l}+|||\du
v_1|||_{m-1,\ast\ast,\l}\right)|||u|||_{m,\ast\ast,\l}\}ds,
\end{array}
\end{equation}
for all $t\in[0,T],\,\l\geq
1$, and for any multi-index $\gamma$ and integer $k\geq 2$ such that $|\gamma|+2k\leq
m+1$. The constant $C$ is independent of $\l$. 
\end{lemma}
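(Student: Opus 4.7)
The plan is to apply $\lambda^{-\gamma_0}\partial^\gamma_\star\partial_1^k$ directly to the subsystem \eqref{3.25} for $u^{I\!I}$ and derive a linearized equation of exactly the form treated in Lemma \ref{lemma-stime-lin}, then estimate the resulting source term. Setting $V=\lambda^{-\gamma_0}\partial^\gamma_\star\partial_1^k u^{I\!I}$, one obtains
\[
A_0^{I\!I\,I\!I}\partial_t V+\sum_{j=1}^3 A_j^{I\!I\,I\!I}\partial_j V=F_\gamma,
\]
where
\[
F_\gamma=\lambda^{-\gamma_0}\partial^\gamma_\star\partial_1^k G-\lambda^{-\gamma_0}[\partial^\gamma_\star\partial_1^k,A_0^{I\!I\,I\!I}\partial_t]u^{I\!I}-\sum_{j=1}^3\lambda^{-\gamma_0}[\partial^\gamma_\star\partial_1^k,A_j^{I\!I\,I\!I}\partial_j]u^{I\!I}.
\]
The essential point, which justifies separating out $u^{I\!I}$, is that the singular operator $\lambda C_j\partial_j$ does \emph{not} appear in the principal part of \eqref{3.25}: the associated large derivatives have been absorbed into $G$.

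Next I would carry out the standard energy identity, multiplying by $V$ and integrating over $\Omega$. The boundary contribution is
\[
\int_\Gamma A_1^{I\!I\,I\!I}V\cdot V\,d\sigma,
\]
and this \emph{vanishes identically}: by \eqref{structure4} we have $A_1\equiv 0$ on $\Sigma_T$, hence every block of $A_1$, in particular $A_1^{I\!I\,I\!I}$, vanishes on $\Gamma$. Crucially, no boundary condition on $V$ is required, which is fortunate since $k\geq 2$ normal derivatives of $u^{I\!I}$ do not satisfy any a priori trace condition. Using \eqref{stimaDiv} to bound $|Div\vec A^{I\!I\,I\!I}|_\infty\leq\Phi(R,K)$, the same manipulation as in the proof of Lemma \ref{lemma-stime-lin} yields
\[
a_0\|V(t)\|^2\leq a_0^{-1}\|V(0)\|^2+\int_0^t\Bigl\{\Phi(K)\|V\|^2+2\int F_\gamma\cdot V\Bigr\}ds.
\]

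It then remains to control $\|F_\gamma\|$ by the quantities appearing on the right-hand side of \eqref{tang-k-norm}. For the forcing term $\lambda^{-\gamma_0}\partial^\gamma_\star\partial_1^k G$, inspection of \eqref{3.26} is decisive. In $G_1=(-\lambda\partial_2 q,-\lambda\partial_3 q)^{\!T}$ the crucial rewriting $\partial^\gamma_\star\partial_1^k(\lambda\partial_j q)=\partial_j\partial^\gamma_\star\partial_1^{k-1}(\lambda\partial_1 q)$ for $j=2,3$ shows that what really appears is a tangential derivative of $\lambda\partial_1 q$ of total order $(|\gamma|+1)+2(k-1)=|\gamma|+2k-1\leq m$; since also $|\gamma|+1\leq m-1$, this is controlled by $|||\lambda\partial_1 q|||_{m-1,\ast\ast,\lambda}$. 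The remaining terms in $G_2$, involving $(\rho_p/\rho)(H/\lambda)(\partial_t q+v\cdot\nabla q)$ and $H\cdot\nabla v_1$, are handled by the product inequalities (Lemma \ref{prodotto2}), producing contributions of the form $(1+|||A|||_{m,\ast\ast,\lambda})|||u|||_{m,\ast\ast,\lambda}^2$ together with a factor carrying $|||\partial_1 v_1|||_{m-1,\ast\ast,\lambda}$ from the $H\cdot\nabla v_1$ term. The commutators are expanded by the now-familiar Leibniz scheme of Lemmas \ref{lemma-stime-tang} and \ref{lemma-tang-1-norm}: terms $(\partial_\star^{\gamma'}\partial_1^{k'}A_j^{I\!I\,I\!I})(\partial_\star^{\gamma''}\partial_1^{k''+1}u^{I\!I})$ with $|\gamma'|+|\gamma''|=|\gamma|$ and $k'+k''=k$, to be placed in the appropriate $H^m_\ast$, $H^m_{\ast\ast}$, or $H^m_{\ast\ast\ast}$ spaces according to whether the derivative falling on $u^{I\!I}$ is normal or tangential.

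The hard part will be the commutator with the normal operator $A_1^{I\!I\,I\!I}\partial_1$, exactly as in the proof of Lemma \ref{lemma-tang-1-norm}: when two or more of the $k\geq 2$ normal derivatives $\partial_1$ fall on $A_1^{I\!I\,I\!I}$, a naive estimate loses regularity at the boundary. The remedy is to exploit that $A_1\equiv 0$ on $\Sigma_T$, writing $A_1^{I\!I\,I\!I}=\sigma\cdot(A_1^{I\!I\,I\!I}/\sigma)$ and using the Hardy-type inclusion \eqref{hoH2} together with $(\sigma\partial_1)^h\partial_1 u^{I\!I}\in\ctmdaa$ to absorb the normal derivative via a conormal one. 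This is precisely where the better regularity of $u^{I\!I}$ in $H^m_{\ast\ast\ast}$, as opposed to $H^m_{\ast\ast}$ for the characteristic part $u^I$, enters: it provides the additional normal smoothness needed to close the commutator at $k\geq 2$. Combining all these bounds with Cauchy--Schwarz on $\int F_\gamma\cdot V$ yields \eqref{tang-k-norm}, with constants independent of $\lambda$ thanks to the use of the $\lambda$-weighted norms throughout.
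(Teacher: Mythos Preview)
Your overall strategy is correct and matches the paper's proof: differentiate the $u^{I\!I}$-subsystem \eqref{3.25}, use that $A_1^{I\!I\,I\!I}\equiv 0$ on $\Gamma$ so no boundary term survives (and no trace condition on $V$ is needed), rewrite $\partial^\gamma_\star\partial_1^k G_1$ as tangential derivatives of $\lambda\partial_1 q$ via $\partial_1^k\partial_j=\partial_j\partial_1^{k-1}\partial_1$, and handle $G_2$ and the commutators by the product lemmas. The paper does exactly this; see \eqref{equ-char}--\eqref{estG2}.

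However, your last paragraph contains a conceptual mix-up that you should correct. First, the roles are reversed: $u^{I}=(q,v_1)$ is the \emph{non}characteristic part (the one on which $C_\nu$ acts nontrivially), and it is $u^{I}$, together with $H_1$, that enjoys the extra $H^m_{\ast\ast\ast}$ regularity (Lemma~\ref{3.6}); $u^{I\!I}$ is the characteristic part and lives only in $H^m_{\ast\ast}$. Second, and more importantly, the paper's proof of this lemma does \emph{not} use any $H^m_{\ast\ast\ast}$ regularity. The $A_1$-commutator is expanded as in \eqref{exp-commut-A1}: the terms carrying $\partial_1 A_1^{I\!I\,I\!I}$ are treated exactly like the $A_0$-commutator via Lemma~\ref{prodotto2}, while the purely tangential commutator $[\partial^\gamma_\star,A_1^{I\!I\,I\!I}\partial_1]\partial_1^k u^{I\!I}$ is handled by the same Hardy trick as in \eqref{exp-commutA1}, namely $\partial_\star A_1/\sigma\in\cC_T(H^{m-2}_\ast)$ from \eqref{hoH2} paired with $(\sigma\partial_1)\partial_1^k u^{I\!I}$, which sits in $H^m_{\ast\ast}$ because $(1+|\gamma''|)+2k\le m+1$ whenever $|\gamma'|\ge 1$. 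Everything closes in $H^m_{\ast}$ and $H^m_{\ast\ast}$; the $H^m_{\ast\ast\ast}$ machinery enters only later, in Lemma~\ref{3.6}, to estimate the critical products $v_1\partial_1 v_1$, $H_1\partial_1 H_1$ appearing in the direct bounds for $\lambda\partial_1 q$ and $\partial_1 v_1$. So your ``hard part'' is less hard than you fear, and your proposed remedy is not the one actually needed here.
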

%%%
\begin{proof}
We apply $\l^{-\gamma_0}\dg_\star\du^k$ to (3.25) and obtain
\begin{equation}\label{equ-char}
\begin{array}{ll}
(A_0^{I\! I\,I\! I}\dt+\sum_{j=1}^3A_j^{I\! I\,I\! I}\dj)\l^{-\gamma_0}\dg_\star\du^ku^{I\!
I}=K',
\end{array}
\end{equation}
where
\begin{equation}\label{K}
\begin{array}{ll}
K'=\l^{-\gamma_0}\dg_\star\du^kG
-\l^{-\gamma_0}[\dg_\star\du^k,A_0^{I\! I\,I\! I}\dt]u^{I\! I}-\l^{-\gamma_0}[\dg_\star\du^k,A_j^{I\!
I\,I\! I}\dj]u^{I\!
I}. 
\end{array}
\end{equation}
Set $V=\l^{-\gamma_0}\dg_\star\du^ku^{I\! I}$. We multiply (3.28) by $V$ and integrate
over $\Omega$. An
integration by parts (where we use $A_1^{I\! I\,I\! I}=0$ at $\Gamma$) gives
\begin{equation}\label{balance-char}
\begin{array}{ll}
{d\over dt}\int A_0^{I\! I\,I\! I}V\cdot V=\int(2K'+Div A^{I\! I\,I\! I}V)\cdot
V
\end{array}
\end{equation}
where $Div A^{I\! I\,I\! I}=\dt A_0^{I\! I\,I\! I}+\sum_j\dj A_j^{I\!
I\,I\! I}$. We
have
\begin{equation}\label{div-char}
\begin{array}{ll}
||Div A^{I\! I\,I\! I}||_{L^\infty}\leq C|||A|||_3\leq C|||A|||_{5,\ast\ast}.\end{array}
\end{equation}
We estimate $K'$ in \eqref{K}. We expand the first commutator in the form
\begin{equation}
\begin{array}{ll}\label{exp-commut-A0}
\displaystyle   [\dg_\star\du^k,A_0^{I\! I\,I\! I}\dt]u^{I\! I}=\sum_{h=0}^{k-1}\dg_\star\duno^{k-h-1}(\du A_0^{I\! I\,I\! I}\dt\du^h u^{I\! I})
\\
\displaystyle\qquad\qquad\qquad\quad  +\sum_{h=1}^{k-1}\dg_\star\duno^{k-h}( A_0^{I\! I\,I\! I}\dt\du^hu^{I\! I}) + [\dg_\star,A_0^{I\! I\,I\! I}\dt]\du^ku^{I\! I}.
\end{array}
\end{equation}
For the first term we observe that $\du A_0^{I\! I\,I\! I}\in \ctmdaa$ and $\dt\du^h u^{I\! I}\in \cC_T(H^{m-2h-1}_{\ast\ast})$ so that from Lemma \ref{prodotto2} we have $\du A_0^{I\! I\,I\! I}\dt\du^h u^{I\! I}\in \cC_T(H^{m-2h-1}_{\ast\ast})$ because $m-2\ge4$. This allows to estimate the first term because $\gamma +2(k-h-1)\le m-2h-1$. We argue similarly for the second term in the right-hand side of \eqref{exp-commut-A0}. For the third term we argue as for \eqref{exp-comm-A0}. We obtain
\begin{equation}\label{gamma-k-A0}
\begin{array}{ll}
||[\l^{-\gamma_0}\dg_\star\du^k,A_0^{I\! I\,I\! I}\dt]u^{I\! I}||\leq
C||[A_0]||_{m,\ast\ast,\l}|||u|||_{m,\ast\ast,\l} ,\qquad
t\in[0,T],
\end{array}
\end{equation}
provided that $m\ge6$.
In a similar way we get
\begin{equation}\label{gamma-k-Aj}
\begin{array}{ll}
||[\l^{-\gamma_0}\dg_\star\du^k,A_j^{I\! I\,I\! I}\dj]u^{I\! I}||\leq
C|||A_j|||_{m,\ast\ast,\l}|||u|||_{m,\ast\ast,\l},\quad
t\in[0,T],
\quad j=2,3.
\end{array}
\end{equation}
Moreover, we expand the commutator with $A_1$ as
\begin{equation}
\begin{array}{ll}\label{exp-commut-A1}
\displaystyle   [\dg_\star\du^k,A_1^{I\! I\,I\! I}\du]u^{I\! I}=  \sum_{h=1}^{k-1}\dg_\star\duno^{k-h}( \du A_1^{I\! I\,I\! I}\du^hu^{I\! I}) + [\dg_\star,A_1^{I\! I\,I\! I}\duno]\du^ku^{I\! I}.
\end{array}
\end{equation}
For the first summation we argue as before, while for the commutator we argue as for \eqref{exp-commutA1}. We obtain
\begin{equation}\label{gamma-k-A1}
\begin{array}{ll}

||[\l^{-\gamma_0}\dg_\star\du^k,A_1\du]u||\leq
C|||A_1|||_{m,\ast\ast,\l}|||u|||_{m,\ast\ast,\l},\qquad
t\in[0,T],
\end{array}
\end{equation}
provided that $m\ge6$.

Now we estimate $G$. We have
\begin{equation}\label{estG1}
\begin{array}{ll}
\displaystyle
||\l^{-\gamma_0}\dg_\star\du^k G_1||=||\l^{-\gamma_0}\dg_\star\du^k{\l\dd q\choose\l\dtr
q}||=||\l^{-\gamma_0}\dg_\star\du^{k-1}{\dd\choose\dtr}\l\du q||\leq |||\l\du
q|||_{m-1,\ast\ast,\l}
\end{array}
\end{equation}
because $|\gamma|+2k\le m+1$ is equivalent to $|\gamma|+1+2(k-1)\le m$; moreover we can obtain
\begin{equation}\label{estG2}
\begin{array}{ll}
||\l^{-\gamma_0}\dg_\star\du^k
G_2||\leq
||\l^{-\gamma_0}\dg_\star\du^{k-1}\left\{{\rho_p\over\rho}{H\over\l}(\dt\du
q+v\cdot\nabla\du q)+\begin{pmatrix}
 H\cdot\nabla \du
v_1\\0\\0
 \end{pmatrix}\right\}||
\\
+||\l^{-\gamma_0}\dg_\star\du^{k-1}\left\{\du({\rho_p\over\rho}{H\over\l})\dt
q+\du({\rho_p\over\rho}{H\over\l}\,v)\cdot\nabla q+
\begin{pmatrix}
 \du
H\cdot\nabla
v_1\\0\\0\end{pmatrix}\right\}||\\
\leq
C|||A|||_{m,\ast\ast,\l}(|||u|||_{m,\ast\ast,\l}+|||\du
q|||_{m-1,\ast\ast,\l}+|||\du v_1|||_{m-1,\ast\ast,\l}),
\end{array}
\end{equation}
since
$$
|||{\rho_p\over\rho}{H\over\l}|||_{m,\ast\ast,\l}+
|||{\rho_p\over\rho}{H\over\l}\,v|||_{m,\ast\ast,\l}+|||H|||_{m,\ast\ast,\l}\leq
|||A|||_{m,\ast\ast,\l}. $$
From \eqref{gamma-k-A0}, \eqref{gamma-k-Aj}, \eqref{gamma-k-A1}--\eqref{estG2}, we obtain \eqref{tang-k-norm}. 
\end{proof}

\bigskip

%\begin{lemma}\label{3.5}
%Let $u$ be a sufficiently smooth solution of \eqref{ibvp}
%satisfying
%\eqref{set}. The matrices $A=A(u)=(A_0(u),\dots,A_3(u))$ satisfy
%\begin{equation}\label{stimaA}
%\begin{array}{ll}
%
%|||A|||_{m,\ast\ast}\leq
%\Phi(\omega,R,|||u|||_{m-1,\ast\ast})|||u|||_{m,\ast\ast}
%\quad\forall\l\geq 1,
%\end{array}
%\end{equation}
%for all $t\in[0,T]$, where $\Phi$ is a suitable increasing function independent of
%$\l$.
%\end{lemma}
%\begin{proof}
%It follows from Lemma A.5 and Lemma A.8.
%\end{proof}

Now we give a direct estimate of the normal derivatives of the \lq noncharacteristic\rq\ part of the solution $q_1,v_1,H_1$.

\begin{lemma}\label{3.6}
Let $u$ be a sufficiently smooth solution of \eqref{ibvp}
satisfying
\eqref{set}.  

(i) The pressure $q$ satisfies the estimate
\begin{equation}\label{stima-norm-q}
\begin{array}{ll}
|||\l\du q|||_{m-1,\ast\ast,\l}\leq \Phi(
K)\left( |||u|||_{m,\ast\ast,\l}+|||v_1,H_1|||_{m,\ast\ast\ast,\l}\right),
\end{array}
\end{equation}
for all $t\in[0,T],\,\l\geq1$, for a suitable function $\Phi$ independent of $\l$.
%%%

(ii) There exists $\l_0\ge1$ sufficiently large (dependent on $|||u|||_{5,\ast\ast,\l}$) such that
\begin{equation}\label{stima-norm-v1}
\begin{array}{ll}

|||\du v_1|||_{m-1,\ast\ast,\l}\leq \Phi(
K) |||u|||_{m,\ast\ast,\l},
\end{array}
\end{equation}
for all $t\in[0,T],\,\l\geq
\l_0$, for a suitable function $\Phi$ dependent on $\l_0$ but independent of $\l$.

(iii) The components of the solution $q_1,v_1,H_1$ satisfy
\begin{equation}\label{stima-3ast}
\begin{array}{ll}

|||q_1,v_1,H_1|||_{m,\ast\ast\ast,\l}\leq \Phi(
K) |||u|||_{m,\ast\ast,\l},
\end{array}
\end{equation}
for all $t\in[0,T],\,\l\geq
\l_0$,  for a suitable function $\Phi$ dependent on $\l_0$ but independent of $\l$.

\end{lemma}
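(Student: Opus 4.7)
The strategy is to exploit the algebraic structure of the MHD system: the normal derivatives of the non-characteristic components $q,v_1,H_1$ are determined algebraically by the equations themselves, not by an energy argument. Every such identity couples $\du q$, $\du v_1$, $\du H_1$ with a factor $\l^{-1}$---this is the structural feature of \eqref{structure} singled out in \cite{MR748308}---so the system produced by iterating these identities closes as a contraction for $\l\ge\l_0$ large enough. Part (i) is free of the $\l_0$-restriction because it bounds $\l\du q$ rather than $\du q$, making the relevant coupling factor $1$ instead of $\l^{-1}$.

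For part (i) I would use \eqref{duno q}, which is just the first row of the momentum equation:
\[
\l\du q=-\rho(\dt v_1+v\cdot\nabla v_1)+H\cdot\nabla H_1.
\]
Applying $\l^{-\gamma_0}\partial^\gamma_\star\du^h$ with $|\gamma|+2h\le m$ and using the Moser-type anisotropic product inequalities employed in Lemmas \ref{lemma-stime-tang}--\ref{lemma-tang-k-norm} gives term-by-term bounds. The only contributions not living in the $\hmaa$ scale are $v_1\du v_1\subset v\cdot\nabla v_1$ and $H_1\du H_1\subset H\cdot\nabla H_1$, in which one normal derivative of $v_1,H_1$ appears at the highest tangential order; this is exactly the information carried by the stronger $\hmaaa$ norm of $v_1,H_1$, yielding \eqref{stima-norm-q}.

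For part (iii) I would start with $H_1$: from $\nabla\cdot H=0$, $\du H_1=-\dd H_2-\dtr H_3$, so every normal derivative of $H_1$ is traded for a tangential derivative of $(H_2,H_3)$, and iterating and counting indices in the definition of the $\hmaaa$ norm yields the required bound with no $\l$-restriction. For $v_1$ I would rewrite \eqref{mhd21} as
\[
\du v_1=-(\dd v_2+\dtr v_3)-\l^{-1}\tfrac{\rho_p}{\rho}\bigl\{(\dt+v\cdot\nabla)q-\tfrac{H}{\l}\cdot(\dt+v\cdot\nabla)H\bigr\},
\]
and for $q$ the identity from part (i). Applying $\du$ repeatedly and substituting produces a closed linear system in the three $\hmaaa$-seminorms of $q,v_1,H_1$ whose self-coupling is $O(\l^{-1})$; choosing $\l\ge\l_0$ makes it a contraction and gives (iii). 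Part (ii) then follows from the same formula for $\du v_1$: the tangential contribution $-(\dd v_2+\dtr v_3)$ is immediately $\le|||u|||_{m,\ast\ast,\l}$; the factor $\l^{-1}\dt q$ is one of the weighted time derivatives of $q$ counted in $|||q|||_{m,\ast\ast,\l}$; the tangential parts of $v\cdot\nabla q$ and $\tfrac{H}{\l}\cdot\nabla H$ are handled by the same product estimates; and the delicate term $\l^{-1}v_1\du q$ is written as $\l^{-1}(v_1/\sigma)(\sigma\du q)$, where $v_1/\sigma\in L^\infty$ thanks to $v_1=0$ on $\Gamma$ and $\sigma\du q=\partial^{(1,0,0)}_\ast q$ is a conormal derivative---combined with (i) and (iii) this gives an effective $\l^{-2}$ factor, absorbable for $\l\ge\l_0$.

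The main obstacle is the bookkeeping of the circular coupling between $\du q$, $\du v_1$, $\du H_1$ when $\du$ is iterated inside the $\hmaaa$ norm: the $\ast\ast\ast$ scale was designed precisely to carry one extra normal derivative of the non-characteristic components while remaining compatible with the Moser-type product estimates, and keeping the correct tangential/normal order budget in each estimate---together with a careful use of the boundary-vanishing identity $v_1=0$ on $\Gamma$ through Hardy-type control of $v_1/\sigma$---is the main technical burden of the proof.
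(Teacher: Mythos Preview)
Your proposal is correct and follows essentially the same route as the paper: part (i) from \eqref{duno q} with the critical terms $v_1\du v_1$, $H_1\du H_1$ handled via the $\hmaaa$ norm through the $v_1/\sigma$ Hardy-type control (Theorem~\ref{main3}); $H_1$ from the divergence constraint; and $v_1$ from \eqref{mhd21} with the $\l^{-1}$ factor providing the absorption. The only organizational difference is that the paper proves (ii) before (iii): since the critical terms $v_1\du q$, $v_1\du H$ in the $\du v_1$ equation produce only $|||v_1|||_{m,\ast\ast\ast,\l}$ on the right-hand side (not $|||q|||_{m,\ast\ast\ast,\l}$ or $|||H_1|||_{m,\ast\ast\ast,\l}$), the paper closes \eqref{stima-norm-v1} by a direct self-absorption using $|||v_1|||_{m,\ast\ast\ast,\l}\le|||v_1|||_{m,\ast\ast,\l}+|||\du v_1|||_{m-1,\ast\ast,\l}$, and then (iii) follows immediately without needing to set up the full $3\times3$ coupled system you describe.
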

\begin{proof}
We estimate the normal derivative of $q$ from \eqref{duno q} by using Lemmata \ref{nuovo3}, \ref{prodotto2}. Specifically, for the critical term $ v_1\du v_1$ we have
\begin{equation}\label{critic1}
\begin{array}{ll}
 |||v_1\du v_1|||_{m-1,\ast\ast,\l}
  &\displaystyle \le C\left(
\|\frac{ v_1}{\sigma}\|_{W^{1,\infty}_\ast(Q_T)}|||\sigma\du v_1|||_{m-1,\ast\ast,\l}+\|\sigma\du v_1\|_{W^{1,\infty}_\ast(Q_T)}|||\frac{ v_1}{\sigma}|||_{m-1,\ast\ast,\l}\right)\\
&\displaystyle  \le C\left(
|||\frac{ v_1}{\sigma}|||_{3,\ast\ast,\l}|||v_1|||_{m,\ast\ast,\l}+|||\sigma\du v_1|||_{3,\ast\ast,\l}|||\frac{ v_1}{\sigma}|||_{m-1,\ast\ast,\l}\right)\\
\displaystyle  &\le 
C|||v_1|||_{4,\ast\ast\ast,\l}|||v_1|||_{m,\ast\ast\ast,\l}\le 
C|||v_1|||_{5,\ast\ast,\l}|||v_1|||_{m,\ast\ast\ast,\l},
\end{array}
\end{equation}
where for the last inequalities we have used Theorem \ref{main3} and Lemma \ref{lemma-base}(i). A similar argument gives
\begin{equation}\label{critic2}
\begin{array}{ll}
\displaystyle |||H_1\du H_1|||_{m-1,\ast\ast,\l}  \le 
C|||H_1|||_{5,\ast\ast,\l}|||H_1|||_{m,\ast\ast\ast,\l}.
\end{array}
\end{equation}
The estimate of the other terms in \eqref{duno q} is straightforward and so we obtain \eqref{stima-norm-q}.
For the estimate of $\duno v_1$ we consider \eqref{mhd21} that gives
\begin{equation}\label{duno v_1}
\duno v_1=-\ddue v_2-\dtre v_3 -{\rho_p\over\l\rho}\{(\partial_t+v\cdot\nabla)q-{H\over\l}
\cdot(\partial_t+(v\cdot\nabla))H \}.
\end{equation}
The most critical terms in the right-hand side of \eqref{duno v_1} are $ v_1\du q$ and $ v_1\du H$, that are estimated by applying the arguments used for \eqref{critic1} to obtain
\begin{equation}\label{critic3}
\begin{array}{ll}
\displaystyle |||v_1\du q|||_{m-1,\ast\ast,\l}  &\displaystyle  \le C\left(
|||\frac{ v_1}{\sigma}|||_{3,\ast\ast,\l}|||q|||_{m,\ast\ast,\l}+|||\sigma\du q|||_{3,\ast\ast,\l}|||\frac{ v_1}{\sigma}|||_{m-1,\ast\ast,\l}\right)\\
&\displaystyle  \le C\left(|||v_1|||_{5,\ast\ast,\l}|||q|||_{m,\ast\ast,\l}+|||q|||_{4,\ast\ast,\l}|||v_1|||_{m,\ast\ast\ast,\l} \right),
\end{array}
\end{equation}
\begin{equation}\label{critic4}
\begin{array}{ll}

\displaystyle |||v_1\du H|||_{m-1,\ast\ast,\l}    \le C\left(
|||v_1|||_{5,\ast\ast,\l}|||H|||_{m,\ast\ast,\l}+|||H|||_{4,\ast\ast,\l}|||v_1|||_{m,\ast\ast\ast,\l} \right).
\end{array}
\end{equation}
After estimating the other terms in the right-hand side of \eqref{duno v_1} we obtain
\begin{equation}\label{est-duno v_1}
\begin{array}{ll}

\displaystyle |||\du v_1|||_{m-1,\ast\ast,\l}    \le 
|||u^{I\! I}|||_{m,\ast\ast,\l}+\frac{\Phi(K)}{\l} \left(|||u|||_{m,\ast\ast,\l}+|||v_1|||_{m,\ast\ast\ast,\l} \right).
\end{array}
\end{equation}
Considering that 
\[
|||v_1|||_{m,\ast\ast\ast,\l}\le |||v_1|||_{m,\ast\ast,\l}+|||\duno v_1|||_{m-1,\ast\ast,\l},
\]
see Lemma \ref{lemma-base}(ii), we can obtain from \eqref{est-duno v_1} the estimate of the form
\begin{equation}\label{est-duno v_1n2}
\begin{array}{ll}

\displaystyle |||\du v_1|||_{m-1,\ast\ast,\l}    \le 
{\Phi(K)} |||u|||_{m,\ast\ast,\l},
\end{array}
\end{equation}
for all $\l\ge\l_0$ sufficiently large and for a suitable function $\Phi$, that is \eqref{stima-norm-v1}. $\l_0$ depends on $|||u|||_{5,\ast\ast,\l}$.
%%%%%%%%%%%%%%%%%%%%%%%%%%

\noindent
Finally, from the divergence constraint \eqref{divH} we readily obtain
\[
 |||\du H_1|||_{m-1,\ast\ast,\l}    \le 
 |||H_2|||_{m,\ast\ast,\l}+ |||H_3|||_{m,\ast\ast,\l}\le  2|||u|||_{m,\ast\ast,\l},
\]
and from Lemma \ref{lemma-base}(ii) we have 
\begin{equation}\label{est-H1}
\begin{array}{ll}

\displaystyle ||| H_1|||_{m,\ast\ast\ast,\l}    \le C
  |||u|||_{m,\ast\ast,\l}.
\end{array}
\end{equation}
Analogously, from \eqref{stima-norm-v1} we obtain
\begin{equation}\label{est-v1}
\begin{array}{ll}

\displaystyle ||| v_1|||_{m,\ast\ast\ast,\l}    \le 
{\Phi(K)} |||u|||_{m,\ast\ast,\l}.
\end{array}
\end{equation}
Then from \eqref{stima-norm-q}, \eqref{est-H1}, \eqref{est-v1} we have
\begin{equation}\label{est-q}
\begin{array}{ll}
\displaystyle ||| q|||_{m,\ast\ast\ast,\l}    \le 
{\Phi(K)} |||u|||_{m,\ast\ast,\l}.
\end{array}
\end{equation}
The three estimates \eqref{est-H1}--\eqref{est-q} give \eqref{stima-3ast}.
\end{proof}

%%%%%%%%%%%%%%%%%%%%%%%%%%%%%%%%%%%%%%%%%%%%%%
%%%%%%%%%%%%%%%%%%%%%%%%%%%%%%%%%%%%%%%%%%%%%%
\bigskip
%%%%%%%%%%%%%%%%%%%%%%%%%%%%%%%%%%%%%%%%%%%%%%
%%%%%%%%%%%%%%%%%%%%%%%%%%%%%%%%%%%%%%%%%%%%%%

Set for convenience
\begin{equation}\label{defM}
\begin{array}{ll}
M(t)=
\displaystyle
\Big(\sum_{|\a|\leq m}||\l^{-\a_0}\partial^\alpha_\star
u||^2+\sum_{|\b|\leq
m-1}||\l^{-\b_0}\partial^\b_\star\du u||^2
\\
\displaystyle\qquad\qquad\quad   +\sum_{k\geq 2}\;\sum_{|\gamma|+2k\leq
m+1}||\l^{-\gamma_0}\dg_\star\du^k u^{I\! I}||^2\Big)^{1/2}.
\end{array}
\end{equation}
From Lemmata \ref{stimeA}, \ref{lemma-stime-tang} -- \ref{3.6} we first obtain
\begin{equation}\label{stimaH1}
\begin{array}{ll}
\displaystyle a_0M^2(t)\leq a_0^{-1}M^2(0)+\int^t_0\Phi(K)\{M^2(s)+|||u(s)|||_{m,\ast\ast,\l}^2+|||u(s)|||_{m,\ast\ast,\l}^3\}\,ds
.
\end{array}
\end{equation}
Then we observe that by definition
\begin{equation}\label{3.58}
\begin{array}{ll}
|||u(t)|||_{m,\aa,\l}\leq M(t)+|||\du q(t),\du v_1(t)|||_{m-1,\ast,\l},
\end{array}
\end{equation}
and from \eqref{stima-norm-q}, \eqref{stima-3ast}, \eqref{est-duno v_1} we can obtain 
 \begin{equation*}
\begin{array}{ll}\label{}
\displaystyle  |||\du q(t),\du v_1(t)|||_{m-1,\ast,\l} \le  |||u^{I\! I}(t)|||_{m,\ast\ast,\l}+\frac{\Phi(K)}{\l} |||u(t)|||_{m,\ast\ast,\l}.
\end{array}
\end{equation*}
Substituting in \eqref{3.58} and observing that $ |||u^{I\! I}(t)|||_{m,\ast\ast,\l}\le M(t)$ we get
\begin{equation*}\label{}
\begin{array}{ll}
\displaystyle |||u(t)|||_{m,\aa,\l}\leq M(t)+\frac{\Phi(K)}{\l} |||u(t)|||_{m,\ast\ast,\l}.
\end{array}
\end{equation*}
For $\l$ sufficiently large, say for $\l\ge\l_0$ again, with $\l_0$ dependent on $K$, we obtain 
\begin{equation}\label{uH}
\begin{array}{ll}
\displaystyle |||u(t)|||_{m,\aa,\l}\leq {\Phi_1(K)}M(t),
\end{array}
\end{equation}
for every $t\in[0,T]$ and $\l\ge\l_0$, for a suitable increasing function $\Phi_1$. 
We substitute \eqref{uH} in \eqref{stimaH1} and obtain the following lemma:

\begin{lemma}\label{3.8}
Let $u$ be a sufficiently smooth solution
of \eqref{ibvp}
satisfying
\eqref{set}. Then there exists an increasing function $\Phi$ independent of $\l$ such that $M(t)$, defined in (3.51), obeys the estimate
\begin{equation}\label{stimaH2}
\begin{array}{ll}
\displaystyle M^2(t)\leq a_0^{-2}M^2(0)+\int^t_0\Phi(K)\{M^2(s)+M^3(s)\}\,ds
.
\end{array}
\end{equation}
for every $t\in[0,T]$ and $\l\ge\l_0$, with $\l_0$ dependent on $K$. 
\end{lemma}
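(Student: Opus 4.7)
The plan is to assemble the estimates from Lemmata \ref{stimeA}, \ref{lemma-stime-tang}, \ref{lemma-tang-1-norm}, \ref{lemma-tang-k-norm}, and \ref{3.6} into a single integral inequality for $M(t)$, then close it via an absorption argument for large $\l$.

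First, I would sum \eqref{stima-tang} over all multi-indices $\a$ with $|\a|\le m$, \eqref{tang-1-norm} over all $\b$ with $|\b|\le m-1$, and \eqref{tang-k-norm} over all admissible pairs $(\gamma,k)$ with $k\ge 2$ and $|\gamma|+2k\le m+1$. By the very definition \eqref{defM} of $M(t)$, the left-hand sides collapse to $a_0 M^2(t)$, while the initial-time contributions assemble into $a_0^{-1}M^2(0)$. The integrand on the right-hand side then consists of a term $\Phi(K)M^2(s)$ together with mixed products of $|||A|||_{m,\aa,\l}$, $|||\l\du q|||_{m-1,\ast,\l}$, and $|||\du v_1|||_{m-1,\aa,\l}$ against $|||u|||_{m,\aa,\l}$ (or its square).

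Second, I would use Lemma \ref{stimeA} to dominate $|||A|||_{m,\aa,\l}$ by $\Phi(R,K)|||u|||_{m,\aa,\l}$, and Lemma \ref{3.6} to control $|||\l\du q|||_{m-1,\aa,\l}$, $|||\du v_1|||_{m-1,\aa,\l}$, and $|||q,v_1,H_1|||_{m,\aaa,\l}$ by $\Phi(K)|||u|||_{m,\aa,\l}$, valid for $\l\ge \l_0$ with $\l_0$ depending on $K$ through $|||u|||_{5,\aa,\l}$. After these substitutions every mixed product collapses into powers of $|||u|||_{m,\aa,\l}$, which yields the intermediate bound \eqref{stimaH1}.

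Third, to close the inequality I would use the trivial splitting \eqref{3.58}, namely $|||u(t)|||_{m,\aa,\l}\le M(t)+|||\du q,\du v_1|||_{m-1,\ast,\l}$, and rewrite the last term through Lemma \ref{3.6}(i)--(ii) (cf.\ \eqref{est-duno v_1}) as $|||u^{I\!I}(t)|||_{m,\aa,\l}+\l^{-1}\Phi(K)|||u(t)|||_{m,\aa,\l}$. Since $|||u^{I\!I}|||_{m,\aa,\l}\le M(t)$ by definition \eqref{defM}, taking $\l_0$ still larger so that $\l^{-1}\Phi(K)\le 1/2$ lets me absorb the remainder and obtain \eqref{uH}, i.e.\ $|||u(t)|||_{m,\aa,\l}\le \Phi_1(K)\,M(t)$. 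Plugging this into \eqref{stimaH1} and dividing by $a_0$ yields \eqref{stimaH2}.

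The main obstacle is this last absorption step: it is made possible by the finer regularity of the noncharacteristic components $(q,v_1,H_1)$ in the extra-weighted space $H^m_{\aaa}$ provided by Lemma \ref{3.6}(iii), which in turn rests on the structural assumptions \eqref{structure} --- in particular the dependence of $A_0$ on $u/\l$ (supplying the decisive $\l^{-1}$ factors) and the vanishing of $A_\nu$ on $\Sigma_T$. Without this additional regularity, the \lq large\rq\ normal-derivative quantities $\l\du q$ and $\du v_1$ could not be controlled by $|||u|||_{m,\aa,\l}$ and the scheme of estimates would fail to close.
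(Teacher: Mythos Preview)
Your proposal is correct and follows the paper's own argument essentially step for step: summing Lemmata \ref{lemma-stime-tang}--\ref{lemma-tang-k-norm}, invoking Lemmata \ref{stimeA} and \ref{3.6} to reach \eqref{stimaH1}, then using the splitting \eqref{3.58} together with the refined bound \eqref{est-duno v_1} (rather than the cruder \eqref{stima-norm-v1}) to absorb the noncharacteristic remainder for $\l\ge\l_0$ and obtain \eqref{uH}, from which \eqref{stimaH2} follows after division by $a_0$. Your identification of the absorption step as the crux, and of the role of the $H^m_{\aaa}$ regularity of $(q,v_1,H_1)$ in making it work, is exactly the point the paper emphasizes.
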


%%%%%%%%%%%%%%%%%%%%%%%%%%%%%%%%%%%%%%%%%%%%%%
%%%%%%%%%%%%%%%%%%%%%%%%%%%%%%%%%%%%%%%%%%%%%%
\bigskip
%%%%%%%%%%%%%%%%%%%%%%%%%%%%%%%%%%%%%%%%%%%%%%
%%%%%%%%%%%%%%%%%%%%%%%%%%%%%%%%%%%%%%%%%%%%%%
%%%%%%%%%%%%%%%%%%%%%%%%%%%%%%%%%%%%%%%%%%%%%%
%%%%%%%%%%%%%%%%%%%%%%%%%%%%%%%%%%%%%%%%%%%%%%
%\bigskip
%%%%%%%%%%%%%%%%%%%%%%%%%%%%%%%%%%%%%%%%%%%%%%
%%%%%%%%%%%%%%%%%%%%%%%%%%%%%%%%%%%%%%%%%%%%%%

\section{Proof of Theorem 2.2}

First of all we observe that (2.10) yields from (2.4) that $\l^{-1}u_0^{\l(1)}$ is
bounded in $H^{m-1}(  \O)$, uniformly in $\l$. By repeated differentiation in time we verify
that $\l^{-k}u_0^{\l(k)}$ is uniformly bounded in $H^{m-k}(  \O)$. It follows that
$$
M^\l(0)\leq|||u_0^\l|||_{m,\aa,\l}\leq C_1(K_1)\qquad\forall\l\geq1,
$$
where $M^\l(t)$ replaces $M(t)$, defined in \eqref{defM}, when $u=u^\l$. Since we have
$$
A_0(u^\l(t)/\l)=A_0(u^\l_0/\l)+\int^t_0{\partial A_0\over \partial
u}\l^{-1}\dt u^\l(s)ds
$$
and similar equations for the functions $\rho(u^\l(t)/\l)$ and
$(\rho_p/\rho)(u^\l(t)/\l)$, by means of the imbedding $C([0,T];H^{m-2}_\aa(  \O))\hookrightarrow
 C^0_B(\overline Q_T)$ we show
\begin{equation}\label{4.1}
\begin{array}{ll}
|A_0(u^\l(t)/\l)-A_0(u^\l_0/\l)|\leq \Phi_2(|||u^\l|||_{m,\aa,\l,T})T,\\
\\
|\rho(u^\l(t)/\l)-\rho(u^\l_0/\l)|
+|(\rho_p/\rho)(u^\l(t)/\l)-(\rho_p/\rho)(u^\l_0/\l)|
\leq
\Phi_2(|||u^\l|||_{m,\aa,\l,T})T
\end{array}
\end{equation}
for all $\l\geq1$ and $ t\in[0,T]$, and for a suitable increasing function $\Phi_2$. Let us choose $K$ in \eqref{set} as
\[
K=2C_1(K_1).
\]
We take $T>0$ such that any solution $M^\l(t)$ of \eqref{stimaH2} obeys
\begin{equation}\label{4.4}
\begin{array}{ll}

M^\l(t)\leq 2a_0^{-1}C_1(K_1)\qquad\forall t\in[0,T],
\end{array}
\end{equation}
and we also assume that $T$ is so small that
\begin{equation}\label{4.5}
\begin{array}{ll}

\Phi_2(2a_0^{-1}\Phi_1(K)C_1(K_1))T\leq\min\{a_0,(2a_0)^{-1},R/2\},\\
2a_0^{-1}\Phi_1(K)T\leq 1.
\end{array}
\end{equation}

It follows from \eqref{uH}, \eqref{stimaH2} and the Gronwall inequality, \eqref{4.1}--\eqref{4.5}, that  $u^\l$ satisfies the uniform estimates
\begin{equation}\label{4.7}
\begin{array}{ll}
|||u^\l|||_{m,\aa,\l,T}\leq
\Phi_1(K)\max_{t\in[0,T]}M^\l(t)\leq 2a_0^{-1}C_1(K_1)\Phi_1(K),
\end{array}
\end{equation}
\begin{equation}\label{4.71}
\begin{array}{ll}
|||u^\l|||_{5,\aa,\l,T}&\leq
|||u_0^\l|||_{5,\aa,\l}+T|||\dt u^\l|||_{m-1,\aa,\l,T}\\
&\leq C_1(K_1)+ 2a_0^{-1}C_1(K_1)\Phi_1(K)T \leq 2C_1(K_1)=K \qquad\forall\l\geq\l_0,
\end{array}
\end{equation}
and the other requirements of \eqref{set} on the interval
$[0,T]$.

Since $|||u^\l|||_{m,\aa,\l,T}$ depends boundedly on the parameter $\l$ (i.e. it is
bounded uniformly for $\l$ in the bounded intervals $0<\l'\leq\l\leq\l''$), the uniform estimate \eqref{4.7} holds
with a suitable constant for all $\l\geq1$. This gives the first part of \eqref{u-uniform}. 
%%%
Directly from
\eqref{mhd23} we obtain 
\begin{equation}\label{4.8}
\begin{array}{ll}
|||\dt H^\l|||_{m-1,\aa,\l,T}\leq C
|||u^\l|||_{m,\aa,\l,T}^2\leq C(K_1),
\end{array}
\end{equation}
which gives the second part of \eqref{u-uniform}. The proof of Theorem 2.2 is complete.

\section{Proof of Theorem 2.3}

From \eqref{u-uniform}, we immediately deduce the
existence of a subsequence, again denoted by $u^\l$, and functions $(q^\infty, w,
B)$ such that
\begin{equation}\label{5.1}
\begin{array}{ll}

u^\l\rightarrow(q^\infty,w,B)\quad\hbox{weakly-$\ast$ in } L^\infty(0,T;\hmaa)
\end{array}
\end{equation}
as
$\l\rightarrow+\infty$. Moreover, from the Ascoli-Arzel\`a theorem and Theorem \ref{th-compact} we also
deduce  $$H^\l\rightarrow B\inn C([0,T];H^{m-1}_{\ast,loc}(  \O)).$$
Let $S$ be the closure of $\{v\in C^\infty_0(\O);\nabla\cdot v=0\}$ in $L^2(  \O)$ and $G$ be the
orthogonal complement of $S$ in $L^2(  \O)$. Then we have $L^2(  \O)=S\oplus G$ by
the Helmholtz decomposition. We denote by $P_S,P_G$ the orthogonal projections in
$L^2(  \O)$ onto $S$ and $G$, respectively. It is well known that
$P_S,P_G\in\cL(H^m(  \O),H^m(  \O))$ for $m\geq 0$. The convergence of $v^\l$ is shown
by applying the result of \cite{MR1458527}, which uses the dispersion of the acoustic part
$(q^\l,P_Gv^\l)$ of the flow in the unbounded domain. We write \eqref{mhd21}, \eqref{mhd22}, in the form
(we use the notation of \cite{MR1458527}) 
\begin{equation}\label{5.2}
\begin{array}{ll}
\dt q^\l+\l\mu_1\nabla\cdot v^\l=\G^0,\cr
\dt v^\l+\l\mu_2\nabla q^\l=\G,
\end{array}
\end{equation}
where $\mu_1=({\overline\rho_p\over\overline\rho})^{-1},\mu_2=(\overline\rho)^{-1}$, $\overline\rho=\rho(0)$, 
$\overline\rho_p=\rho_p(0)$, 
\begin{equation*}
\begin{array}{ll}\label{}
\G^0=({\overline\rho_p\over\overline\rho})^{-1}\{
({\overline\rho_p\over\overline\rho}-{\rho_p^\l\over\rho^\l})\partial_tq^\l
-{\rho_p^\l\over\rho^\l}[v^\l\cdot\nabla q^\l-{H^\l\over\l}
\cdot(\partial_t+v^\l\cdot\nabla)H^\l]\},\cr

\G=(\overline\rho)^{-1}\{(\overline\rho-\rho^\l)\partial_tv^\l-\rho^\l(v^\l\cdot\nabla)v^\l+
(H^\l\cdot\nabla)H^\l\}.
\end{array}
\end{equation*}
Set $\F=(\G^0,\G)$. Then we show 
\medskip

\begin{lemma}\label{lemma5.1}
There exists a positive constant $C$ independent of $\l$ such that
\begin{equation}\label{5.3}
\begin{array}{ll}
|\F(t)|_1+||\F(t)||_3\leq C\qquad\forall\l\geq 1,t\in[0,T].
\end{array}
\end{equation}
Moreover, the first component $\G_1(t)\in H^1_0(  \O)$ for any $\l\geq 1$ and $t\in[0,T]$.
\end{lemma}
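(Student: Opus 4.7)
The plan is to split $\F=(\G^0,\G)$ into its individual summands and estimate each by a product of bounded quantities, exploiting two structural features: every term is either quadratic in $u^\l$ or carries a small factor coming from one of the differences $\rho^\l-\overline\rho$, $\rho_p^\l/\rho^\l-\overline\rho_p/\overline\rho$, or an explicit $1/\l$ in front of $H^\l$. The ambient control comes from Theorem \ref{th-uniform}: $u^\l$ and $\l^{-1}\dt u^\l$ are uniformly bounded in $\ctmaa$ and $\ctmuaa$ respectively, and, since $m\ge 6$, the embedding $\hmaa\hookrightarrow H^{[(m+1)/2]}(\O)\hookrightarrow H^3(\O)$ gives a uniform bound of every component of $u^\l$ in the $H^3$-algebra.

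First I would handle the coefficient differences. Since $\rho^\l=\rho(p^\l)$ with $p^\l=q^\l/\l-|H^\l/\l|^2/2$ and $\rho\in C^{m+1}$, a Taylor expansion at $p=0$ combined with a Moser-type composition inequality in $H^3(\O)$ yields
\[
\|\overline\rho-\rho^\l\|_3+\|\overline\rho_p/\overline\rho-\rho_p^\l/\rho^\l\|_3\le \f{C}{\l},
\]
while $\rho^\l$ and $\rho_p^\l/\rho^\l$ are themselves uniformly bounded in $H^3(\O)$.

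For the $H^3$ estimate of $\F$ I would combine these bounds with the $H^3$ product rule. The delicate terms are rewritten by redistributing the factor $\l$ between the small coefficient and the large time derivative, e.g.
\[
(\overline\rho-\rho^\l)\dt v^\l=\bigl[\l(\overline\rho-\rho^\l)\bigr]\bigl[\l^{-1}\dt v^\l\bigr],
\]
and analogously for $(\overline\rho_p/\overline\rho-\rho_p^\l/\rho^\l)\dt q^\l$, so that both factors are uniformly bounded in $H^3$. The quadratic terms $\rho^\l(v^\l\cdot\nabla)v^\l$, $(H^\l\cdot\nabla)H^\l$, $(\rho_p^\l/\rho^\l)v^\l\cdot\nabla q^\l$ are treated directly by the $H^3$ algebra property. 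The terms containing $H^\l/\l$ are handled as $(\l^{-1}H^\l)\cdot(\dt H^\l)$ or $(\l^{-1}H^\l)\cdot(v^\l\cdot\nabla H^\l)$, where the second part of \eqref{u-uniform} supplies $\|\l^{-1}\dt H^\l\|_3\le C$. Summing these contributions gives $\|\F(t)\|_3\le C$. For $|\F(t)|_1$ I would rewrite each summand as a product of two $L^2(\O)$ factors (using the same redistribution of $1/\l$) and apply Cauchy--Schwarz; the required $L^2$ bounds follow from the uniform $\ctmaa$ estimates.

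Finally, for the claim $\G_1(t)\in H^1_0(\O)$, the explicit form
\[
\G_1=\overline\rho^{-1}\bigl\{(\overline\rho-\rho^\l)\dt v^\l_1-\rho^\l(v^\l\cdot\nabla)v^\l_1+(H^\l\cdot\nabla)H^\l_1\bigr\}
\]
is examined on $\Sigma_T$: the boundary conditions \eqref{bc} give $v^\l_1=H^\l_1=0$ there, hence also $\dt v^\l_1=\dd v^\l_1=\dtr v^\l_1=0$ and $\dd H^\l_1=\dtr H^\l_1=0$ on $\Gamma$. Consequently each summand of $(v^\l\cdot\nabla)v^\l_1=\sum_j v^\l_j\dj v^\l_1$ and of $(H^\l\cdot\nabla)H^\l_1=\sum_j H^\l_j\dj H^\l_1$ has at least one factor that vanishes on $\Gamma$, so $\G_1|_\Gamma=0$. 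Combined with the $H^3\hookrightarrow H^1$ bound just obtained, this places $\G_1(t)$ in $H^1_0(\O)$. The only mildly delicate point of the whole argument is the careful pairing of powers of $\l$: each coefficient difference must absorb the potentially $O(\l)$ growth of $\dt u^\l$, and once this bookkeeping is done the rest is routine product calculus in $H^3$.
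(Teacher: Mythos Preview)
Your overall strategy---redistributing factors of $\l$ between the small coefficient differences and the large time derivatives, then using product calculus---is exactly the paper's, and your treatment of the delicate terms $(\overline\rho-\rho^\l)\dt v^\l$, $(\overline\rho_p/\overline\rho-\rho_p^\l/\rho^\l)\dt q^\l$, of the $L^1$ bound via Cauchy--Schwarz, and of the boundary claim $\G_1\in H^1_0$ is correct.

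There is, however, a gap in your handling of the quadratic convective terms $\rho^\l(v^\l\cdot\nabla)v^\l$, $(H^\l\cdot\nabla)H^\l$, $(\rho_p^\l/\rho^\l)v^\l\cdot\nabla q^\l$ at the borderline regularity $m=6$. You propose to bound them ``directly by the $H^3$ algebra property'', but the embedding $H^6_{\ast\ast}(\O)\hookrightarrow H^{[(6+1)/2]}(\O)=H^3(\O)$ gives only $u^\l\in H^3$, hence $\nabla u^\l\in H^2$ and in particular $\partial_1 u^\l\notin H^3$ in general (the pure normal derivative $\partial_1^4 u^\l$ is not controlled in $H^6_{\ast\ast}$). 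So the product $v^\l_1\,\partial_1 v^\l$ is \emph{not} a product of two $H^3$ functions, and the $H^3$ algebra argument as stated fails. Your plan would go through for $m\ge 7$, but not for $m=6$.

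The fix---implicit in the paper, which works in $H^{m-1}_{\ast\ast}$ rather than in $H^3$ directly---is to exploit the boundary conditions once more. Since $v^\l_1=H^\l_1=0$ on $\Gamma$, one writes $v^\l_1\partial_1=(v^\l_1/\sigma)(\sigma\partial_1)$ and $H^\l_1\partial_1=(H^\l_1/\sigma)(\sigma\partial_1)$; then $v^\l_1/\sigma,H^\l_1/\sigma\in H^{m-1}_{\ast\ast}$ by Theorem~\ref{main3} (together with Lemma~\ref{3.6}(iii)), and $\sigma\partial_1 u^\l\in H^{m-1}_{\ast\ast}$ trivially, so Lemma~\ref{prodotto2}(ii) places each convective term in $H^{m-1}_{\ast\ast}\hookrightarrow H^3$. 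This is precisely the mechanism already used in \eqref{critic1}--\eqref{critic4}. With this correction your argument is complete and coincides with the paper's.
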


\begin{proof}

The estimate \eqref{5.3} is an easy consequence of the uniform estimate
\eqref{u-uniform}. We only observe that we have
$$
({\rho_p^\l\over\rho^\l}-{\overline\rho_p\over\overline\rho})\partial_tq^\l=
(q^\l-|H^\l|^2/( 2\l))\dt
q^\l/\l\int_0^1({\partial\,\over\partial
p}{\rho_p^\l\over\rho^\l})(\tau(q^\l/\l-|H^\l/\l|^2/2))d\tau$$
which shows that
$({\rho_p^\l/\rho^\l}-{\overline\rho_p/\overline\rho})\partial_tq^\l$ is
uniformly bounded in $C([0,T];H^{m-1}_\aa(  \O))$ $\hookrightarrow$ $ C([0,T];H^3(  \O))$ and in $
C([0,T];L^1(  \O))$.  $(\rho^\l-\overline\rho)\dt v^\l$ is treated similarly.
The second part of the lemma follows directly from the boundary conditions $v_1=H_1=0$
on $\Gamma$. 
\end{proof}

\medskip
\begin{lemma}\label{lemma5.2}
There exists a positive constant $C$ independent of $\l$ such that
\begin{equation}\label{5.4}
\begin{array}{ll}
||P_Sv^\l(t)||_3+||\dt P_Sv^\l(t)||_3\leq C\qquad\forall\l\geq 1,t\in[0,T].
\end{array}
\end{equation}
\end{lemma}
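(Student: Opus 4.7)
The plan is to bound $\|P_Sv^\l(t)\|_3$ directly from \eqref{u-uniform} via Sobolev embedding, and to estimate $\|\dt P_Sv^\l(t)\|_3$ by solving the momentum equation \eqref{mhd22} for $\dt v^\l$ and cancelling the large gradient contribution against $P_S$.

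For the first term, $m\ge 6$ yields the embedding $H^m_{\aa}(\O)\hookrightarrow H^3(\O)$, and $P_S\in\cL(H^3(\O),H^3(\O))$, so
\[
\|P_Sv^\l(t)\|_3\le C\|v^\l(t)\|_{m,\aa}\le CK_2,
\]
uniformly in $\l$. For the time derivative, I would use \eqref{mhd22} to solve
\[
\dt v^\l=-(v^\l\cdot\na)v^\l-\frac{\l}{\rho^\l}\na q^\l+\frac{1}{\rho^\l}(H^\l\cdot\na)H^\l,
\]
and split off the singular piece:
\[
\frac{\l}{\rho^\l}\na q^\l=\na\Big(\frac{\l q^\l}{\ov\rho}\Big)+\mu^\l\,\na q^\l,\qquad \mu^\l:=\l\Big(\frac{1}{\rho^\l}-\frac{1}{\ov\rho}\Big).
\]
The first summand lies in $G$, hence is annihilated by $P_S$. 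As in the proof of Lemma \ref{lemma5.1}, the expansion $\rho^\l-\ov\rho=p^\l\int_0^1\rho_p(\tau p^\l)\,d\tau$ with $p^\l=q^\l/\l-|H^\l|^2/(2\l^2)$ shows that $\l(\rho^\l-\ov\rho)$, and hence $\mu^\l$, is uniformly bounded in $H^3(\O)$. Thus
\[
\dt P_Sv^\l=-P_S\Big[(v^\l\cdot\na)v^\l+\mu^\l\,\na q^\l-\frac{1}{\rho^\l}(H^\l\cdot\na)H^\l\Big],
\]
and it remains to bound each bracketed term in $H^3$ uniformly in $\l$.

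The genuinely delicate piece is $(v^\l\cdot\na)v^\l$: for $j=2,3$ the normal derivative $\du v_j$ is controlled only in $H^2$ by the embedding of $H^m_\aa$, which falls short of $H^3$ product estimates in three dimensions. I would exploit the boundary condition $v_1=0$ on $\Gamma$ together with the improved regularity $v_1\in H^m_\aaa$ furnished by Lemma \ref{3.6}(iii). Writing $v_1=\sigma(x_1)(v_1/\sigma)$ and using the equivalence (highlighted in the introduction) between the weighted normal derivative of a function vanishing at $\Gamma$ and the conormal derivative, one gets $v_1/\sigma\in H^{m-1}_\aa\hookrightarrow H^3$, so that
\[
v_1\du v_j=\frac{v_1}{\sigma}\,\ds v_j,
\]
with $\ds v_j\in H^{m-1}_\aa\hookrightarrow H^3$. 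The Moser product estimate in $H^3$ (valid since $H^3\hookrightarrow L^\infty$ in three dimensions) then closes the bound. The tangential pieces $v_k\dk v_j$ for $k=2,3$ are handled directly by Moser from $v^\l\in H^m_\aa\hookrightarrow H^3$. The magnetic nonlinearity $(1/\rho^\l)(H^\l\cdot\na)H^\l$ is treated analogously using $H_1=0$ on $\Gamma$ and $H_1\in H^m_\aaa$, while $\mu^\l\,\na q^\l$ is estimated using the extra regularity $q\in H^m_\aaa$ from Lemma \ref{3.6}(iii), which gives $\na q^\l\in H^3$ uniformly.

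The main obstacle is precisely the control of $(v^\l\cdot\na)v^\l$ in the full $H^3$-norm: $H^m_\aa$ alone does not provide enough normal regularity, so the normal derivative must be transferred onto a conormal one via the vanishing of the non-characteristic component at $\Gamma$ and its improved $H^m_\aaa$-regularity from Lemma \ref{3.6}(iii). Once that reduction is in place, the remaining estimates are routine Moser--Sobolev computations.
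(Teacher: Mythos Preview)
Your proposal is correct, but it takes a longer route than the paper. The paper's proof is two lines: it uses the reformulation \eqref{5.2}, where the momentum equation reads $\dt v^\l+\l\mu_2\nabla q^\l=\G$ with \emph{constant} $\mu_2=(\ov\rho)^{-1}$; applying $P_S$ annihilates the pure gradient directly, giving $\dt P_Sv^\l=P_S\G$, and then $\|\G\|_3\le C$ is simply quoted from Lemma~\ref{lemma5.1}. Your version keeps the variable coefficient $1/\rho^\l$ and performs the constant/variable splitting by hand, which is equivalent but less streamlined---you are essentially re-deriving the $H^3$ bound on $\G$ inside the proof of Lemma~\ref{lemma5.2} instead of citing it.

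That said, your detailed treatment of $(v^\l\cdot\nabla)v^\l$ is valuable: you correctly identify that $H^m_{\aa}$ alone yields only $\du v_j\in H^2$, and that one must write $v_1\du v_j=(v_1/\sigma)(\sigma\du)v_j$ and invoke the improved regularity $v_1\in H^m_{\aaa}$ from Lemma~\ref{3.6}(iii) together with Theorem~\ref{main3} to place both factors in $H^{m-1}_{\aa}\hookrightarrow H^3$. This is precisely the mechanism behind the paper's ``easy'' claim in the proof of Lemma~\ref{lemma5.1}; the paper only spells it out later, in Section~6, when bounding $(v^\l\cdot\nabla)v^\l$ in $H^{m-1}_{\aa}$ via \eqref{stima-3ast} and Theorem~\ref{main3}. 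So your argument fills in a detail the paper leaves implicit at this point.
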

\begin{proof}
From \eqref{u-uniform}, $v^\l$ is uniformly bounded in
$C([0,T];H^m_\aa(  \O))\hookrightarrow$ $ C([0,T];H^3(  \O))$. The first part of \eqref{5.4} then
follows from $P_S\in\cL(H^3(  \O),H^3(  \O))$. The second part follows from $\dt
P_Sv^\l=P_S\G$ and \eqref{5.3}. 
\end{proof}

\medskip
The rest of the proof proceeds as in \cite[Lemma 4.3]{MR1458527} and the following arguments. 
Thus we show $q^\infty=0$ that, together with \eqref{5.1}, gives \eqref{conv-u} and we show the
convergence of $v^\l$ as in \eqref{conv-v}. Actually in \cite{MR1458527} system \eqref{5.2} has only one and the same
coefficient $\mu$ instead of $\mu_1,\mu_2$, as we have. However by the change in
scale $q^\l=\a r^\l$ where $\a=\overline\rho({\overline\rho_p})^{-1/2}$ we easily
reduce to that case. We obtain 
\begin{equation}\label{5.5}
\begin{array}{ll}
 w(t)=P_Sw_0-\int^t_0
P_S\{(w\cdot\nabla)w-(\overline\rho)^{-1}(B\cdot\nabla)B\}ds. 
\end{array}
\end{equation}
%%%
The
limit satisfies $w,B\in L^\infty(0,T;H^3(  \O))$ and consequently $w\in Lip(0,T;H^2( \O))$
from \eqref{5.5}. Then it follows 
\begin{equation*}
\begin{array}{ll}\label{}
\dt w(t)=-P_S\{(w\cdot\nabla)w-(\overline\rho)^{-1}(B\cdot\nabla)B\}, \qquad
w_{|t=0}=P_Sw_0,
\end{array}
\end{equation*}
which is the abstract form of \eqref{limit-equ}$_1$. The passage to the limit
in \eqref{mhd23} is easily obtained by using \eqref{conv-u}, \eqref{conv-H} and gives \eqref{limit-equ}$_2$. Since \eqref{limit-equ} has a
unique solution in the above class $L^\infty(0,T;H^3( \O))\cap
Lip(0,T;H^2(  \O))$ we have the convergence of the whole sequences. It follows from \cite{MR1257135} that
$w,B\in\ctm$. Moreover we have
$$
\l(\rho^\l-\overline\rho)=
(q^\l-|H^\l|^2/2\l)\int_0^1\rho_p(\tau(q^\l/\l-|H^\l/\l|^2/2))d\tau,
$$
which is bounded in $C([0,T];H^{m}_{\aa}(  \O))$, so that \eqref{conv-rho} readily follows.

\section{Proof of Theorem 2.4}

We observe that \eqref{well-prep} yields from \eqref{mhdq} that $u_0^{\l(1)}$ is
bounded in $H^{m-1}(  \O)$, uniformly in $\l$. By repeated differentiation in time we verify
that $\l^{-k+1}u_0^{\l(k)}$ is uniformly bounded in $H^{m-k}(  \O)$. It follows that
$$
|||u_0^{\l(1)}|||_{m-1,\aa,\l}\leq C\qquad\forall\l\geq1.
$$
Then we proceed as for \eqref{u-uniform}, the only difference being in the different dependence on
the weight $\l^{-1}$, and obtain 
\begin{equation}\label{u-ut-uniform}
\begin{array}{ll}
|||u^{\l}|||_{m,\aa,\l,T}+|||\dt u^{\l}|||_{m-1,\aa,\l,T}\leq
C\qquad\forall\l\geq1, 
\end{array}
\end{equation}
which in turn gives from \eqref{mhdq}
$$
\l|||\nabla q^{\l}|||_{m-1,\aa,\l,T}+\l|||\nabla\cdot v^{\l}|||_{m-1,\aa,\l,T}\leq
C\qquad\forall\l\geq1. $$
Thus \eqref{u-uniform2} is shown. It readily follows that 
$$
\nabla q^\l\rightarrow 0,\quad\nabla\cdot v^\l\rightarrow 0
\quad\hbox{in } C([0,T];H^{m-1}_{\aa}(  \O))
$$
and from \eqref{u-ut-uniform}, the Ascoli-Arzel\`a theorem and Theorem \ref{th-compact},
$$
q^\l\rightarrow 0,\quad v^\l\rightarrow w
\quad\hbox{in }C([0,T];H^{m-1}_{\ast,loc}(  \O)),
$$
that is \eqref{conv-qv}.
Now we show \eqref{conv-grad-q}. We take the time derivative of \eqref{5.2}
\begin{equation}\label{sist-vt}
\begin{array}{ll}
\dt q^\l_t+\l\mu_1\nabla\cdot v^\l_t=\G^0_t,\\
\dt v^\l_t+\l\mu_2\nabla q^\l_t=\G_t,\
\end{array}
\end{equation}
where $q^\l_t=\dt q^\l,v^\l_t=\dt v^\l$ and so on.
Set $\F_t=(\G^0_t,\G_t)$.
\begin{lemma}\label{6.1}
There exists a positive constant $C$ independent of $\l$ such that
\begin{equation*}
\begin{array}{ll}\label{}
|\F_t(t)|_1+||\F_t(t)||_2\leq C,\qquad
||P_Sv^\l_t(t)||_3+||\dt P_Sv^\l_t(t)||_2\leq C
\end{array}
\end{equation*}
for all $\l\geq 1,t\in[0,T]$. Moreover, the first component $\G_{t1}(t)\in H^1_0(  \O)$. 
\end{lemma}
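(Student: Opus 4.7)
The plan is to mirror the proofs of Lemmata \ref{lemma5.1} and \ref{lemma5.2}, now exploiting the stronger uniform estimate \eqref{u-ut-uniform} provided by well-prepared data. For the bound on $\|\F_t(t)\|_2$, I would differentiate each term in the definitions of $\G^0$ and $\G$ once more in $t$ and estimate the resulting products via Moser-type inequalities, using the embeddings $H^{m-1}_{\aa}(\O)\hookrightarrow H^3(\O)$ and $H^{m-2}_{\aa}(\O)\hookrightarrow H^2(\O)$ (recall $m\geq 6$), together with the identity
\[
\rho^\l-\overline\rho=(q^\l-|H^\l|^2/(2\l))\int_0^1\rho_p(\tau(q^\l/\l-|H^\l/\l|^2/2))\,d\tau
\]
and the analogous one for $\rho_p^\l/\rho^\l-\overline\rho_p/\overline\rho$ already used in Lemma \ref{lemma5.1}. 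These identities put a factor $\l^{-1}$ in front of the order-one quantities $q^\l,|H^\l|^2$, which is precisely what compensates the second-order time derivatives $\dt^2 v^\l,\dt^2 H^\l,\dt^2 q^\l$ that \eqref{u-ut-uniform} only controls by $C\l$ in $H^{m-2}_{\aa}(\O)$. The $L^1$ bound on $\F_t$ then follows by Cauchy--Schwarz, just as in Lemma \ref{lemma5.1}: each term of $\dt\G^0$ and $\dt\G$ is a product in which one factor is in $L^2$ (from $q^\l$, $H^\l$, or a gradient of such a quantity) while the other is controlled in $L^2\cap L^\infty$ by \eqref{u-ut-uniform}.

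The boundary statement $\G_{t1}(t)\in H^1_0(\O)$ follows by the same observation made in Lemma \ref{lemma5.1}. On $\Gamma$ the conditions $v_1^\l=H_1^\l=0$ and their tangential derivatives yield $\G_1|_\Gamma=0$ for all $t$: the convective terms $(v^\l\cdot\nabla)v_1^\l$ and $(H^\l\cdot\nabla)H_1^\l$ reduce on $\Gamma$ to $v_1^\l\duno v_1^\l$ and $H_1^\l\duno H_1^\l$ which both vanish, while the remaining term contains $\dt v_1^\l$ which vanishes since $v_1^\l\equiv 0$ on $\Sigma_T$. Differentiating in $t$ preserves the zero trace, so $\G_{t1}|_\Gamma=0$, and the $H^1$ regularity is part of the $H^2$-bound already established.

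For $\|P_Sv^\l_t(t)\|_3\leq C$, I would simply apply $P_S\in\cL(H^3(\O),H^3(\O))$ together with $\|v^\l_t\|_3\leq C$, which comes from $\dt u^\l\in L^\infty(0,T;H^{m-1}_{\aa}(\O))\hookrightarrow L^\infty(0,T;H^3(\O))$ under \eqref{u-ut-uniform}. The last bound $\|\dt P_Sv^\l_t(t)\|_2\leq C$ cannot be obtained by direct control of $\|v^\l_{tt}\|_2$, since $\dt^2u^\l$ is only bounded by $C\l$ in $H^{m-2}_{\aa}$; instead I would apply $P_S$ to the time derivative of the second row of \eqref{sist-vt}, use $P_S\nabla q^\l_t=0$ from the Helmholtz decomposition to get $\dt P_Sv^\l_t=P_S\G_t$, and then conclude from the already established bound $\|\G_t\|_2\leq C$.

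The main delicate point is the $\l$-bookkeeping in the products arising from $\F_t$: terms like $(\overline\rho-\rho^\l)\dt^2v^\l$ and $\dt\rho^\l\cdot\dt v^\l$ would fail to be uniformly bounded without the $\l^{-1}$ gain extracted respectively from $\rho^\l-\overline\rho\sim q^\l/\l$ and $\dt\rho^\l\sim\dt q^\l/\l$. This is precisely where the passage from Lemma \ref{lemma5.1} to Lemma \ref{6.1} forces the use of the extra weighted estimate on $\dt u^\l$ in \eqref{u-ut-uniform}, which in turn is available only under the well-prepared condition \eqref{well-prep}.
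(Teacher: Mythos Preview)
Your proposal is correct and follows essentially the same route as the paper, which simply says to repeat the arguments of Lemmata \ref{lemma5.1} and \ref{lemma5.2} using that $\F_t$ is uniformly bounded in $C([0,T];H^{m-2}_{\ast\ast}(\O))$, $v^\l_t$ in $C([0,T];H^{m-1}_{\ast\ast}(\O))$, and the embeddings $H^{m-2}_{\ast\ast}\hookrightarrow H^2$, $H^{m-1}_{\ast\ast}\hookrightarrow H^3$; your write-up spells out in more detail the $\l$-bookkeeping behind this. One small slip: to obtain $\dt P_Sv^\l_t=P_S\G_t$ you apply $P_S$ directly to the second row of \eqref{sist-vt} (which is already the time-differentiated system), not to its time derivative, as your own stated identity confirms.
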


\begin{proof}
The proof follows as in Lemmata \ref{lemma5.1} and \ref{lemma5.2} by observing that $\F_t$
is uniformly bounded in $C([0,T];H^{m-2}_\aa(  \O))$, $v^\l_t$ is uniformly bounded in
$C([0,T];H^{m-1}_\aa(  \O))$ and the imbeddings $H^{m-2}_\aa(  \O)\hookrightarrow H^2(  \O),H^{m-1}_\aa(  \O)\hookrightarrow H^3(  \O)$ hold.
\end{proof}

Now we repeat the argument of \cite{MR1458527} for the solution to \eqref{sist-vt} and obtain
\begin{equation}\label{conv-dtv}
\begin{array}{ll}
\dt v^\l\rightarrow \dt w\quad\hbox{in }
C_{loc}((0,T]\times\overline  \O).
\end{array}
\end{equation}
By \eqref{u-uniform}, \eqref{stima-3ast} and Lemma \ref{prodotto2}(ii) and Theorem \ref{main3}, we have that $(v^\l\cdot\nabla)v^\l$ is bounded uniformly in
$C([0,T];H^{m-1}_\aa(  \O))$ with $\dt (v^\l\cdot\nabla)v^\l$ bounded uniformly in
$C([0,T];H^{m-2}_\aa(  \O))$. Then the usual compactness argument gives the convergence of
$(v^\l\cdot\nabla)v^\l$ to $(w\cdot\nabla)w$ in $C([0,T];H^{m-2}_{\ast,loc}(  \O))$ which
yields
\begin{equation}\label{}
\begin{array}{ll}
(v^\l\cdot\nabla)v^\l\rightarrow (w\cdot\nabla)w\quad\hbox{in }
C_{loc}([0,T]\times\overline  \O).
\end{array}
\end{equation}
Analogously
\begin{equation}\label{conv-BgradB}
\begin{array}{ll}
(H^\l\cdot\nabla)H^\l\rightarrow (B\cdot\nabla)B\quad\hbox{in }
C_{loc}([0,T]\times\overline  \O).
\end{array}
\end{equation}
Then \eqref{conv-grad-q} follows from \eqref{limit-equ}$_1$, \eqref{mhd22}, \eqref{conv-rho}, \eqref{conv-dtv}--\eqref{conv-BgradB}.

\appendix
\section{Properties of anisotropic space}\label{anisotropicspaces}

Several properties of the function spaces $\hma,\;\hmaa,\; \hmaaa$ are
contained in the
following lemmas and theorems. These results provide basic tools for calculations in
such spaces,
in particular in $\hmaa$, that we use in the present paper. For the proofs see \cite{MR2604255,MR1346224,MR1401431,MR1779863,MR2967997,techreport}. If
not explicitly stated, $\Omega$ is either $\R^n_+$
or a bounded open subset of
$\R^n$ with
$C^{\infty}$ boundary $\Gamma$. Recall that $[{n/2}]$ and $[{(n+1)/ 2}]$ denote the integer part of ${n/ 2}$ and ${(n+1)/ 2}$, respectively.

\begin{lemma}\label{lemma-base}
Let $m\geq 1$. (i) $H^{m+1}_{\ast}(  \O)\hookrightarrow \hmaa$, $H^{m+1}_{\ast\ast}(  \O)\hookrightarrow \hmaaa$.

\noindent(ii) The following characterizations hold:
\[\hmaa=\{u\in\hma : \dnu u\in H^{m-1}_\ast(  \O)\},\]
\[\hmaaa=\{u\in\hmaa : \dnu
u\in H^{m-1}_{\ast\ast}(  \O)\},\]
and there exists a constant $C>0$ such that
\begin{equation*}
\begin{array}{ll}\label{}
||u||_{H^m_{\ast\ast}(\Omega)}\le C(||u||_{H^m_{\ast}(\Omega)}+||\dnu u||_{H^{m-1}_{\ast}(\Omega)}) \quad \forall u\in H^m_{\ast\ast}(\Omega),
\end{array}
\end{equation*}
\begin{equation*}
\begin{array}{ll}\label{}

||u||_{H^m_{\ast\ast\ast}(\Omega)}\le C(||u||_{H^m_{\ast\ast}(\Omega)}+||\dnu u||_{H^{m-1}_{\ast\ast}(\Omega)}) \quad \forall u\in H^m_{\ast\ast\ast}(\Omega).
\end{array}
\end{equation*}

\end{lemma}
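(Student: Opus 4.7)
The plan is to prove both parts by direct comparison of the multi-index conditions that define the three anisotropic spaces. The only analytic input needed is that $\dnu = -\partial_1$ as a constant-coefficient operator on $\O$, so that for the $L^2$ norms in play the characterizations in terms of $\dnu$ and $\partial_1$ coincide; otherwise everything reduces to organizing the lattice of pairs $(\alpha,k)$ attached to $\partial^\alpha_\ast\partial_1^k u$.

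For part (i), the first embedding $H^{m+1}_\ast(\O)\hookrightarrow\hmaa$ is immediate: the defining constraint $|\alpha|+2k\le m+1$ for $H^{m+1}_\ast$ already covers the set $\{|\alpha|+2k\le m+1,\ |\alpha|\le m\}$ defining $\hmaa$, and the norm bound is the corresponding sum of $L^2$ contributions. For $H^{m+1}_{\ast\ast}(\O)\hookrightarrow\hmaaa$ I would split into the three regimes that appear in the definition of $\hmaaa$: $k=0$ (needs $|\alpha|\le m$, supplied by $|\alpha|\le m+1$), $k=1$ (needs $|\alpha|\le m-1$, supplied by $|\alpha|+2\le m+2$, $|\alpha|\le m+1$), and $k\ge 2$ (needs $|\alpha|+2k\le m+2$, exactly the $H^{m+1}_{\ast\ast}$ constraint together with the automatic $|\alpha|\le m$). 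Each inclusion of index sets upgrades to an $L^2$ norm bound.

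For part (ii) the key bookkeeping identity is $\partial^\alpha_\ast\partial_1^k u=\partial^\alpha_\ast\partial_1^{k-1}(\dnu u)$ for $k\ge 1$ (absorbing the sign). I would prove each characterization by two inclusions. For $\hmaa=\{u\in\hma:\dnu u\in H^{m-1}_\ast\}$, the inclusion $\subseteq$ is clear since $\hmaa$ controls all $H^m_\ast$-derivatives by definition, and the remaining top-order pairs $|\alpha|+2k=m+1$, $k\ge 1$ in $\hmaa$ translate under the substitution $k'=k-1$ into the defining pairs of $H^{m-1}_\ast$ applied to $\dnu u$. For the reverse inclusion, each $\hmaa$-defining pair is handled in two cases: $k=0$ is controlled by $\|u\|_{H^m_\ast}$, while $k\ge 1$ is rewritten using the identity above and controlled by $\|\dnu u\|_{H^{m-1}_\ast}$. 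Summing the squared contributions gives the stated norm inequality. The second characterization $\hmaaa=\{u\in\hmaa:\dnu u\in H^{m-1}_{\ast\ast}\}$ follows the same template, but now one matches the three regimes of $\hmaaa$ ($k=0$ and $k=1$ against $\hmaa$; $k\ge 2$ against $H^{m-1}_{\ast\ast}$ applied to $\dnu u$, where the condition $|\alpha|+2(k-1)\le m$, $|\alpha|\le m-1$ is exactly $|\alpha|+2k\le m+2$ together with the (automatic for $k\ge 2$) bound $|\alpha|\le m-1$).

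The main obstacle is not analytic but combinatorial: one must track the piecewise constraint that distinguishes $\hmaaa$ from $\hmaa$ and verify that the reindexing $k\leftrightarrow k\pm 1$ maps the defining sets onto each other in both directions. Because the three spaces are defined purely by $L^2$ conditions on weak derivatives in $\O$, no trace theory, density argument, or boundary regularity is required; the proof is entirely a matter of matching index sets and squaring/summing the resulting $L^2$ norms.
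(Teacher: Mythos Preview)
The paper does not actually prove this lemma; it states the result in Appendix~A and refers the reader to external references for the proofs. So there is no ``paper's own proof'' to compare against.

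Your argument is correct. The statement is purely a matter of comparing the defining index sets of the three anisotropic spaces, and your index bookkeeping is accurate in each case. In particular, your treatment of the reverse inclusion for $\hmaaa$ is right: for $k\ge 2$ the constraint $|\alpha|+2k\le m+2$ forces $|\alpha|\le m-2$, so the condition $|\alpha|\le m-1$ needed for $\dnu u\in H^{m-1}_{\ast\ast}$ is indeed automatic, and the reindexing $k\mapsto k-1$ lands in the right set. One minor imprecision: in the $\subseteq$ direction for $\hmaa$, you speak of ``the remaining top-order pairs $|\alpha|+2k=m+1$'', but in fact \emph{all} pairs $(\alpha,k')$ with $|\alpha|+2k'\le m-1$ for $\dnu u$ must be checked, not just the top-order ones; the verification is the same (set $k=k'+1$ and observe $|\alpha|+2k\le m+1$, $|\alpha|\le m$), so this does not affect correctness. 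This combinatorial approach is the natural one and is almost certainly what the cited references do as well.
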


\medskip

%\begin{theorem}\label{}
%Let $p$ and $q$ be nonnegative integers such that
%$r=\min\{p,q,p+q
%-[{n\over
%2}]-1\}\geq 0$. Then $H^p(\Omega)\cdot H^q(\Omega)\hookrightarrow  H^r(\Omega)$.
%
%\end{theorem}

\noindent
Let us define the space
$$
W^{1,\infty}_\ast(\Omega)=\{u\in L^\infty(\Omega)\, : \, Z_iu\in L^\infty(\Omega),\, i=1\dots,n\},
$$
equipped with its natural norm. We have the following Moser-type inequalities.
\begin{lemma}[\cite{MR2604255}]\label{nuovo3}
Let $m\in\N,\, m\ge1$. (i) For all functions $u$ and $v$ in $H^m_\ast(\Omega)\cap W^{1,\infty}_\ast(\Omega)$ one has $uv \in H^m_\ast(\Omega)$ and
\begin{equation}
\begin{array}{ll}\label{Moser-ast}
\|uv\|_{H^m_\ast(\Omega)}\le C(\|u\|_{H^m_\ast(\Omega)}\|v\|_{W^{1,\infty}_\ast(\Omega)}+\|u\|_{W^{1,\infty}_\ast(\Omega)}\|v\|_{H^m_\ast(\Omega)}).
\end{array}
\end{equation}
 (ii) For all functions $u$ and $v$ in $H^m_{\ast\ast}(\Omega)\cap W^{1,\infty}_\ast(\Omega)$ one has $uv\in H^m_{\ast\ast}(\Omega)$ and
\begin{equation}
\begin{array}{ll}\label{Moser-ast-ast}
\|uv\|_{H^m_{\ast\ast}(\Omega)}\le C(\|u\|_{H^m_{\ast\ast}(\Omega)}\|v\|_{W^{1,\infty}_\ast(\Omega)}+\|u\|_{W^{1,\infty}_\ast(\Omega)}\|v\|_{H^m_{\ast\ast}(\Omega)}).
\end{array}
\end{equation}
(iii) Assume $F=F(u)\in C^m$ with $F(0)=0$. If $u\in H^m_{\ast\ast}(\Omega)$ is such that $$\|u\|_{W^{1,\infty}_\ast(\Omega)}\le K,$$ then $F(u)\in H^m_{\ast\ast}(\Omega)$ and
\begin{equation}\label{}
\begin{array}{ll}
\|F(u)\|_{H^m_{\ast\ast}(\Omega)}\le C(K)\|u\|_{H^m_{\ast\ast}(\Omega)}.
\end{array}
\end{equation}

\end{lemma}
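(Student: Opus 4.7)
The three parts of Lemma \ref{nuovo3} are standard Moser-type estimates adapted to the anisotropic scale, and I would prove them all by the same strategy: Leibniz expansion followed by an interpolation (Gagliardo--Nirenberg) inequality adapted to the conormal setting. First I would establish the following auxiliary interpolation inequality: for every multi-index $\gamma$ and integer $h$ with $j:=|\gamma|+2h\le m$ and every $w\in H^m_\ast(\O)\cap W^{1,\infty}_\ast(\O)$,
\begin{equation*}
\|\partial^\gamma_\ast\partial_1^h w\|_{L^{2m/j}(\O)}\;\le\; C\,\|w\|_{H^m_\ast(\O)}^{j/m}\,\|w\|_{W^{1,\infty}_\ast(\O)}^{1-j/m},
\end{equation*}
and the analogous inequality with $H^m_{\ast\ast}$ in place of $H^m_\ast$. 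Such inequalities follow from the general theory of conormal Sobolev spaces (cf.~the references listed after Lemma \ref{lemma-base}) by combining the usual Gagliardo--Nirenberg inequality in the tangential variables with a one-dimensional interpolation in $x_1$ that takes into account that a normal derivative counts as two conormal ones.

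For part (i), given $|\alpha|+2k\le m$, I would apply $\partial^\alpha_\ast\partial_1^k$ to $uv$ and use the Leibniz rule, noting that $\sigma\partial_1, \partial_2, \partial_3$ are genuine first-order derivations so that the only non-Leibniz contribution comes from the commutator $[\sigma\partial_1,\partial_1]=-\sigma'\partial_1$; these commutator terms have strictly lower order and therefore fit, with room to spare, into the right-hand side. For each Leibniz piece $(\partial^{\beta}_\ast\partial_1^h u)(\partial^{\alpha-\beta}_\ast\partial_1^{k-h} v)$ with orders $j_1$ and $j_2=|\alpha|+2k-j_1\le m$, I would apply H\"older with exponents $2m/j_1$ and $2m/j_2$ and then invoke the interpolation inequality above on each factor; the resulting bound is exactly the product appearing in \eqref{Moser-ast} (the two cases $j_1\in\{0,m\}$ just use the trivial $L^\infty$/$L^2$ bounds supplied by $W^{1,\infty}_\ast$ and $H^m_\ast$). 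Part (ii) is identical in structure: the definition of $H^m_{\ast\ast}$ allows one extra normal derivative, so I simply run the same argument over the index set $\{(\alpha,k):|\alpha|+2k\le m+1,\,|\alpha|\le m\}$, using the $H^m_{\ast\ast}$-version of the interpolation inequality and checking that whenever a Leibniz term has $|\alpha|+2k=m+1$ one of the two factors still falls in $W^{1,\infty}_\ast$ (where low-order derivatives live) while the other is controlled by $\|\cdot\|_{H^m_{\ast\ast}}$.

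For part (iii), I would use Fa\`a di Bruno's formula to write, for each $(\alpha,k)$ in the defining index set of $H^m_{\ast\ast}$,
\begin{equation*}
\partial^\alpha_\ast\partial_1^k F(u)\;=\;\sum_{r=1}^{|\alpha|+2k} F^{(r)}(u)\!\!\sum_{\substack{(\gamma^{(1)},h^{(1)}),\dots,(\gamma^{(r)},h^{(r)})\\ \sum\gamma^{(i)}=\alpha,\ \sum h^{(i)}=k}} c_{\gamma,h}\prod_{i=1}^r\partial^{\gamma^{(i)}}_\ast\partial_1^{h^{(i)}} u,
\end{equation*}
with the convention that since $F(0)=0$ the $r=0$ contribution vanishes (this is used to avoid any $L^2(\O)$-norm of $F(u)$ itself on an unbounded domain). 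The coefficient $F^{(r)}(u)$ is bounded in $L^\infty$ by $C(K)$ thanks to the hypothesis $\|u\|_{W^{1,\infty}_\ast}\le K$ and the $C^m$ regularity of $F$. Each product of $r$ factors is then estimated by iterating part (ii): pull out the factor of highest order in the $H^m_{\ast\ast}$-norm and absorb the remaining $r-1$ factors into $W^{1,\infty}_\ast$ via the interpolation inequality, which is possible precisely because the total weighted order equals $|\alpha|+2k\le m+1$ and $r-1$ of the factors have orders that can be interpolated below $\|u\|_{W^{1,\infty}_\ast}$.

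The main obstacle, and the step that requires the most care, is the treatment of the commutators produced by the weight $\sigma(x_1)$: once one expands $\partial_1^k(\sigma\partial_1)^{\alpha_1}$ the resulting formula is \emph{not} purely Leibniz, and each commutation $[\sigma\partial_1,\partial_1]$ introduces a factor $\sigma'$ together with a loss of one normal derivative. Showing that these lower-order contributions can still be absorbed by the right-hand side of \eqref{Moser-ast}--\eqref{Moser-ast-ast}, uniformly across the mixed indices $(\alpha,k)$, is the genuine technical point; the interpolation estimates above are tailored precisely so that the lost derivative is gained back against the smoothness of $\sigma$ and the extra room available in the $W^{1,\infty}_\ast$ factor.
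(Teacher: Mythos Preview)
The paper does not give its own proof of Lemma \ref{nuovo3}: the result is quoted in Appendix \ref{anisotropicspaces} with the attribution \cite{MR2604255}, and the surrounding sentence explicitly sends the reader to \cite{MR2604255,MR1346224,MR1401431,MR1779863,MR2967997,techreport} for all proofs. So there is nothing in the present paper to compare your proposal against.

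That said, your outline---Leibniz expansion combined with an anisotropic Gagliardo--Nirenberg interpolation, then Fa\`a di Bruno for part (iii)---is precisely the classical route to Moser inequalities and is indeed the strategy used in the cited source. The structure of your argument is correct. The one place where you are too brief is the interpolation step itself: the claimed inequality
\[
\|\partial^\gamma_\ast\partial_1^h w\|_{L^{2m/j}(\O)}\le C\,\|w\|_{H^m_\ast(\O)}^{j/m}\,\|w\|_{W^{1,\infty}_\ast(\O)}^{1-j/m},\qquad j=|\gamma|+2h,
\]
is exactly the substantive content of the lemma, and it does \emph{not} reduce to the isotropic Gagliardo--Nirenberg theorem by a one-line argument. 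The anisotropy (normal derivatives count double, tangential derivatives count single) forces one to interpolate separately in the normal and tangential directions and then combine, and it is in this combination that the $W^{1,\infty}_\ast$ norm---rather than the bare $L^\infty$ norm one sees in the isotropic Moser inequality---becomes necessary. If you intend this as a self-contained proof you should either establish that interpolation inequality or cite it precisely; as written you have correctly identified it as the crux but have not justified it. Your observations about the $\sigma$-commutators are accurate and are the second point requiring care, but they are genuinely of lower order and cause no trouble once the interpolation is in hand.
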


Imbedding theorems for the anisotropic spaces $H^m_\ast(\Omega)$ follow in natural way from the inclusion $H^m_\ast(\Omega)\hookrightarrow H^{[m/2]}(\Omega)$ and the imbedding theorems for standard Sobolev spaces, see \cite{MR1346224,MR1401431,MR1779863}. In particular, following this way one has the continuous imbedding $H^m_\ast(\Omega)\hookrightarrow C^0(\overline\Omega)$ if $m$ is such that $[m/2]>n/2$. This result is improved by the following theorem.
\begin{theorem}[\cite{MR2604255}]\label{imbedding}
Let $\Omega$ be a bounded open subset of $\R^n,\; n\geq 2$, with
$C^\infty$ boundary or the half-space $\Omega=\R^n_+$. Then the continuous imbedding $H^{[(n+1)/2]+1}_\ast(\Omega)\hookrightarrow C^0_B(\overline\Omega)$ holds. If $m>n/2$ then $H^{m}_{\ast\ast}(\Omega)\hookrightarrow C^0_B(\overline\Omega)$.
\end{theorem}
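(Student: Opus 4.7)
The plan is to reduce to the half-space model and then exploit the anisotropic scaling of $H^m_\ast$ via the tangential Fourier transform. I will first use a finite smooth partition of unity subordinate to a cover of $\overline\Omega$ by interior balls and boundary charts that flatten $\Gamma$. On each interior piece the norm $\|\cdot\|_{m,\ast}$ dominates the usual $H^m$ norm, and since $m\ge[(n+1)/2]+1>n/2$ the classical Sobolev embedding $H^m\hookrightarrow C^0_B$ closes that case. So I only need to prove the statement on $\R^n_+$ for functions supported near $\{x_1=0\}$.

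On $\R^n_+$, I would perform the change of variable $y_1=\psi(x_1)$ with $\psi'(x_1)=1/\sigma(x_1)$, so that the conormal field $\sigma(x_1)\partial_1$ becomes the flat derivative $\partial_{y_1}$ while $\partial_2,\partial_3$ are unchanged. In these coordinates $\partial^\alpha_\ast$ is a pure differential operator of order $|\alpha|$, and $H^m_\ast$ becomes a weighted Sobolev space in which we have $m$ ``flat'' conormal derivatives but each pure normal derivative $\partial_{x_1}$ carries the weight $\sigma^{-1}$, effectively counting as order two.

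Next, taking the tangential Fourier transform $\tilde u(y_1,\eta')$ in $y'=(x_2,x_3,\dots)$, the $H^m_\ast$ norm controls, for $|\alpha|+2k\le m$, integrals of the form
\begin{equation*}
\int_{\R^{n-1}}(1+|\eta'|^2)^{|\alpha|}\int_0^\infty |\partial_{y_1}^k \tilde u(y_1,\eta')|^2\,dy_1\,d\eta'.
\end{equation*}
Rescaling $y_1\mapsto y_1/(1+|\eta'|)$ balances normal and tangential scales, turning these into a standard isotropic $H^{m/2}$ bound in $y_1$ with Fourier weight $(1+|\eta'|^2)^{m/2}$. Applying the one-dimensional Sobolev embedding $H^s(\R_+)\hookrightarrow L^\infty$ for any $s>1/2$ pointwise in $\eta'$ gives
\begin{equation*}
|\tilde u(y_1,\eta')|\le C(1+|\eta'|^2)^{-m/2}\,N(\eta'),\qquad \int_{\R^{n-1}}N(\eta')^2\,d\eta'\le C\|u\|_{m,\ast}^2,
\end{equation*}
and then the Fourier inversion combined with Cauchy--Schwarz in $\eta'$ yields the pointwise bound as soon as $\int(1+|\eta'|^2)^{-m}\,d\eta'<\infty$, i.e. $m>(n-1)/2+1/2=n/2$, which is satisfied with room by $m=[(n+1)/2]+1$.

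For the second statement, I use Lemma \ref{lemma-base}(ii), which characterizes $H^m_{\ast\ast}$ as $\{u\in H^m_\ast:\partial_\nu u\in H^{m-1}_\ast\}$; this extra normal derivative improves the effective normal Sobolev index by $1/2$, and exactly the same scaling/Fourier argument closes under the sharper condition $m>n/2$. The main obstacle I expect is the technical handling of the $\eta'$-rescaling: making it uniform in $\eta'$ (in particular past the transition region $|\eta'|\lesssim 1$) and keeping track of the conversion between $\sigma$-weighted $x_1$-norms and the flat $y_1$-Sobolev norms, which is what forces the careful $[(n+1)/2]+1$ versus $n/2$ bookkeeping.
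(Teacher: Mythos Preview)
The paper does not prove this theorem; it is quoted from \cite{MR2604255} as a known tool, so there is no in-paper proof to compare against. Assessing your argument on its own merits, there is a genuine gap in the step following the change of variable $y_1=\psi(x_1)$ and the tangential Fourier transform. Your displayed formula for what $\|\cdot\|_{m,\ast}$ controls is wrong on three counts: the map $\psi$ sends the boundary $x_1=0$ to $y_1=-\infty$, so the $y_1$-integral runs over $\R$ and carries the Jacobian weight $\sigma(x_1(y_1))\sim e^{y_1}$ near $-\infty$; the conormal factor $(\sigma\partial_1)^{\alpha_1}=\partial_{y_1}^{\alpha_1}$ has vanished from the formula; and the pure normal derivative $\partial_{x_1}^k$ becomes $((1/\sigma)\partial_{y_1})^k$, not $\partial_{y_1}^k$. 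Because of this, the rescaling $y_1\mapsto y_1/(1+|\eta'|)$ does not do what you claim: the anisotropy in $H^m_\ast$ is between conormal and normal differentiation, both of which act in the $y_1$ direction after your change of variable, not between $y_1$ and the tangential variables, so no parabolic rescaling in $(y_1,\eta')$ converts the norm into an isotropic one.

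A clear symptom of the error is the threshold you derive: your argument would give $H^m_\ast\hookrightarrow C^0_B$ for every integer $m>n/2$, which for odd $n\ge3$ is strictly stronger than the stated $m\ge[(n+1)/2]+1$ (for $n=3$ you would get $m\ge2$, while the theorem requires $m\ge3$). The sharper condition $m>n/2$ is precisely what the \emph{second} statement asserts for $H^m_{\ast\ast}$, and the improvement there comes from the extra normal derivative built into that space; your argument does not distinguish the two spaces at the level of the threshold, which confirms that a step has been overcounted.
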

\medskip 

The next theorems deal with the product of two anisotropic Sobolev functions, one of which may have low order of smoothness. 
\begin{lemma}\label{prodotto}
Let $m\ge 1$ be an integer and $r=\max\left\{m,\left[\frac{n+1}{2}\right]+2\right\}$. If  $u\in\ctma$ and $v\in\ctra$ then $uv\in\ctma$ and
\begin{equation}\label{prodottogen}
|||u(t)v(t)|||_{m,\ast}\le C|||u(t)|||_{m,\ast}|||v(t)|||_{r,\ast}\,,\qquad t\in[0,T].
\end{equation}
\end{lemma}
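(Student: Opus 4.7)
The plan is to combine Leibniz's rule in time with the spatial Moser-type inequality of Lemma \ref{nuovo3}(i) and the embedding of Theorem \ref{imbedding}. Unfolding the definition,
\[
|||u(t)v(t)|||^2_{m,\ast} = \sum_{k=0}^m \|\partial_t^k(uv)(t)\|^2_{H^{m-k}_\ast},
\]
and applying Leibniz,
\[
\partial_t^k(uv) = \sum_{j=0}^k \binom{k}{j}\partial_t^j u\,\partial_t^{k-j}v,
\]
reduces the estimate to bounding each spatial product $\|\partial_t^j u\cdot \partial_t^{k-j}v\|_{H^{m-k}_\ast}$ for $0\le j\le k\le m$.

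To each such product I would apply Lemma \ref{nuovo3}(i) at order $m-k$, obtaining
\[
\|\partial_t^j u\cdot \partial_t^{k-j}v\|_{H^{m-k}_\ast}
\le C\bigl(\|\partial_t^j u\|_{H^{m-k}_\ast}\|\partial_t^{k-j}v\|_{W^{1,\infty}_\ast}
+\|\partial_t^j u\|_{W^{1,\infty}_\ast}\|\partial_t^{k-j}v\|_{H^{m-k}_\ast}\bigr).
\]
The pure $H^{m-k}_\ast$ norms are absorbed trivially: $\|\partial_t^j u\|_{H^{m-k}_\ast}\le |||u|||_{m,\ast}$ since $j+(m-k)\le m$, and $\|\partial_t^{k-j}v\|_{H^{m-k}_\ast}\le |||v|||_{r,\ast}$ since $(k-j)+(m-k)=m-j\le m\le r$. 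For the $W^{1,\infty}_\ast$ factors I would invoke the embedding $H^{[(n+1)/2]+2}_\ast\hookrightarrow W^{1,\infty}_\ast$, which is a consequence of Theorem \ref{imbedding} applied to a function together with its first conormal derivatives. The factor with fewer time derivatives is the one placed into $W^{1,\infty}_\ast$, and the choice $r=\max\{m,[(n+1)/2]+2\}$ is precisely calibrated so that at least one of $\partial_t^j u$ or $\partial_t^{k-j}v$ has enough spare spatial regularity for this embedding, controlled respectively by $|||u|||_{m,\ast}$ or $|||v|||_{r,\ast}$.

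The main obstacle is the bookkeeping in the degenerate case $k=m$, where $H^{m-k}_\ast = L^2$ and the Moser inequality reduces to a plain H\"older product: for pairs $(k,j)$ with both $j$ and $k-j$ of order $m/2$, neither factor has an obvious $W^{1,\infty}_\ast$ bound from $|||u|||_{m,\ast}$ alone. In that situation I would instead use $\|\partial_t^j u\cdot \partial_t^{k-j}v\|_{L^2}\le \|\partial_t^j u\|_{L^2}\|\partial_t^{k-j}v\|_{L^\infty}$ (or the symmetric version), invoking the cheaper embedding $H^{[(n+1)/2]+1}_\ast\hookrightarrow C^0_B$ of Theorem \ref{imbedding}, which costs one fewer derivative than $W^{1,\infty}_\ast$; this is exactly where the threshold $r\ge[(n+1)/2]+2$ becomes tight. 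A short case analysis on the relative sizes of $j$ and $k-j$, or equivalently an induction on $m$, closes the estimate, and the minor commutator corrections arising from the non-derivation character of $\partial_1$ against $\sigma(x_1)$-weighted tangential operators produce strictly lower-order terms that are absorbed into the inductive hypothesis.
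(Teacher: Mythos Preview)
The paper does not actually prove this lemma; it is stated in Appendix~\ref{anisotropicspaces} with proofs deferred to the cited references, so there is no in-paper argument to compare against. I therefore assess your sketch on its own merits.

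Your core step has a genuine gap. After the time-Leibniz expansion you apply the \emph{symmetric} Moser inequality (Lemma~\ref{nuovo3}(i)) at spatial order $m-k$, which produces a sum of \emph{two} terms. You then say ``the factor with fewer time derivatives is the one placed into $W^{1,\infty}_\ast$'' and that the choice of $r$ guarantees ``at least one'' factor has enough regularity. But the Moser bound forces you to control \emph{both} summands simultaneously; you cannot choose which factor goes where. Concretely, take $n=3$, $m=r=6$, $k=5$, $j=3$: the second Moser term is
\[
\|\partial_t^3 u\|_{W^{1,\infty}_\ast}\,\|\partial_t^2 v\|_{H^1_\ast},
\]
and controlling $\|\partial_t^3 u\|_{W^{1,\infty}_\ast}$ via the embedding would require $\partial_t^3 u\in H^{4}_\ast$, whereas $|||u|||_{6,\ast}$ only gives $\partial_t^3 u\in H^{3}_\ast$. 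So the right-hand side of your Moser estimate is not bounded by $|||u|||_{m,\ast}|||v|||_{r,\ast}$, and the argument breaks at this step. Your fallback paragraph handles only the extremal case $k=m$ (where the target space is $L^2$) and does not repair the intermediate values of $k$.

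The standard remedy in the cited references is to expand \emph{all} derivatives (time, conormal, normal) by Leibniz down to the $L^2$ level and, for each resulting product $(\partial_\star^{\gamma'}\partial_1^{h'}u)(\partial_\star^{\gamma''}\partial_1^{h''}v)$, place the factor of \emph{lower total anisotropic order} into $L^\infty$ via Theorem~\ref{imbedding}. This disposes of the asymmetric terms. For the balanced terms where both orders are near $m/2$, a plain $L^2\times L^\infty$ split still fails when $m<2[(n+1)/2]+2$ (e.g.\ $n=3$, $m=r=4$: the term $\partial_t^2 u\cdot\partial_t^2 v$ has both factors only in $H^2_\ast\not\hookrightarrow L^\infty$); there one needs an intermediate $L^p\times L^{p'}$ H\"older step together with anisotropic Sobolev embeddings of Gagliardo--Nirenberg type, and this is precisely what the threshold $r\ge[(n+1)/2]+2$ is calibrated for.
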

%%%%
\begin{lemma}\label{prodotto2}
Let $m\ge 1$ be an integer and $r=\max\left\{m,2\left[\frac{n}{2}\right]+2\right\}$. (i) If $u\in\ctma$ and $v\in\ctraa$ then $uv\in \ctma$ and
\begin{equation}\label{prodottogen3}
|||u(t)v(t)|||_{m,\ast}\le C|||u(t)|||_{m,\ast}|||v(t)|||_{r,\ast\ast}\,,\qquad t\in[0,T].
\end{equation}
(ii) If $u\in \ctmaa$ and $v\in \ctraa$ then $uv\in \ctmaa$ and
\begin{equation}\label{prodottogen2}
|||u(t)v(t)|||_{m,\ast\ast}\le C|||u(t)|||_{m,\ast\ast}|||v(t)|||_{r,\ast\ast}\,,\qquad t\in[0,T].
\end{equation}
\end{lemma}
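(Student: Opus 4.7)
The plan is to reduce both statements to pointwise-in-time spatial product estimates by treating the time variable as an additional tangential direction. Under this identification, $\partial_\star^\gamma=\partial_t^{\gamma_0}\partial_\ast^{(\gamma_1,\gamma_2,\gamma_3)}$ behaves exactly like a conormal operator on the half-space $\R^{n+1}_+$, so the norms $|||u(t)|||_{m,\ast}$ and $|||u(t)|||_{m,\aa}$ coincide with the standard anisotropic spatial norms in one dimension up. It therefore suffices to establish, at each fixed $t$, the Moser-type product inequalities of the stated form, where the ``large'' factor contributes its full norm in $L^2$ and the ``small'' factor is absorbed in $L^\infty$ via Theorem \ref{imbedding}.

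For part (ii), I apply Leibniz to $\partial_\star^\gamma\partial_1^h(uv)$ with $|\gamma|+2h\leq m+1$ and $|\gamma|\leq m$, obtaining a sum of terms
\[
\binom{\gamma}{\gamma'}\binom{h}{h'}\,\partial_\star^{\gamma'}\partial_1^{h'}u\cdot\partial_\star^{\gamma-\gamma'}\partial_1^{h-h'}v,
\]
and split into two regimes by Hölder. In the regime where $|\gamma-\gamma'|+2(h-h')$ is small (say bounded by $2[n/2]+2$), the derivative $\partial_\star^{\gamma-\gamma'}\partial_1^{h-h'}v$ lies in $H^{r'}_{\aa}(\Omega)$ with $r'$ of order $r-2[n/2]-2$, hence in $L^\infty$ by Theorem \ref{imbedding} since $r\geq 2[n/2]+2$; the remaining $u$-factor is bounded in $L^2$ by $|||u|||_{m,\aa}$, and both constraints $|\gamma'|\leq m$ and $|\gamma'|+2h'\leq m+1$ are inherited automatically from the Leibniz split. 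The complementary regime (most derivatives on $v$) is symmetric: $u$'s derivative is controlled in $L^\infty$ and $v$'s in $L^2$, and again the admissible range for the surviving indices on $v$ is exactly what $|||v|||_{r,\aa}$ controls. Summing over the finitely many Leibniz terms gives \eqref{prodottogen2}.

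Part (i) follows the same Leibniz/Hölder strategy, with the target constraint tightened to $|\gamma|+2h\leq m$ and $u$ only in $\ctma$. The ``$v$ in $L^\infty$'' bound still applies, using either Theorem \ref{imbedding} or, when enough regularity remains, the plain imbedding $H^r_\ast\hookrightarrow H^{[r/2]}\hookrightarrow L^\infty$ valid because $r\geq 2[n/2]+2$ implies $[r/2]>n/2$; and the ``$u$ in $L^\infty$'' bound uses the same anisotropic imbedding for $u$. Since the admissible index set for the $\ast$-norm is a subset of the one for the $\aa$-norm, no new difficulty arises.

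The main obstacle I foresee is the bookkeeping of the normal derivative count in the $\aa$-constraint: the condition $|\gamma|\leq m$ is preserved under Leibniz splits because $|\gamma'|+|\gamma''|=|\gamma|$, but the secondary condition $|\gamma|+2h\leq m+1$ can be saturated precisely at the borderline terms. The delicate point is to verify that whenever one of the factors carries the ``extra'' unit of normal regularity encoded in the $\aa$-norm, it is either the $L^2$-factor (whose $\aa$-norm controls it) or the $L^\infty$-factor, in which case the embedding statement still uses only tangential regularity and at most one normal derivative, both of which are within the range of $|||\cdot|||_{r,\aa}$. This careful matching is exactly what makes the $r=\max\{m,2[n/2]+2\}$ threshold sharp and is the one place where a routine Moser argument must be done with care.
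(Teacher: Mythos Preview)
The paper does not supply a proof of this lemma: the appendix in which it sits opens with the blanket remark that the proofs of all results there are to be found in the cited references, so there is no in-paper argument to compare against. Your outline---treat time as an extra tangential direction, expand by Leibniz, split each term by H\"older, and absorb the lower-order factor in $L^\infty$ via Theorem~\ref{imbedding}---is the standard Moser-type argument one expects those references to carry out, and it is correct in spirit.

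Two places where the sketch is loose. First, the regime threshold ``bounded by $2[n/2]+2$'' for the order of derivatives landing on $v$ is not right: if all $2[n/2]+2$ orders hit $v$ and $r=2[n/2]+2$, the remaining regularity is zero and the $L^\infty$ bound fails. The correct cutoff must leave enough regularity on $v$ to clear the embedding threshold of Theorem~\ref{imbedding}; in practice the $v$-factor stays in $L^\infty$ only when its derivative order is at most $r$ minus roughly $[n/2]+1$ (with the appropriate $\ast$ vs.\ $\ast\ast$ accounting). Second, the ``symmetric'' complementary regime---putting the low-order derivative of $u$ in $L^\infty$---is only available once $m$ itself is large enough for the relevant embedding (for $n=3$: $m\geq3$ in part~(i) via $H^3_\ast\hookrightarrow C^0_B$, and $m\geq2$ in part~(ii) via $H^2_{\ast\ast}\hookrightarrow C^0_B$). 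For smaller $m$ this regime is simply empty: since then $r=2[n/2]+2$ exceeds $m$ by enough margin, every admissible derivative of $v$ up through total order $m$ (resp.\ $m+1$) still embeds in $L^\infty$, and the first regime alone suffices. These are bookkeeping refinements rather than structural gaps; your overall strategy is sound.
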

\medskip 

The next theorems are important for the estimate of conormal derivatives.
\begin{theorem}[\cite{MR2967997}]\label{main}
Let $m\geq 2$.
Let $u\in H^m_\ast({\R}^n_+)\cap H^1_0({\R}^n_+)$ be a function and let $U$ be defined by
\begin{equation}
\begin{array}{ll}\label{defH}
U(x_1,x')
=u(x_1,x')/\sigma(x_1).
\end{array}
\end{equation}
Then
\begin{equation}
\begin{array}{ll}\label{hoH}
\displaystyle   \|U\|_{H^{m-2}_{\ast}({\R}^n_+)}\leq C\|u\|_{H^m_{\ast}({\R}^n_+)}.$$
\end{array}
\end{equation}
\end{theorem}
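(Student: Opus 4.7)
The idea is that $\sigma(x_1)=x_1$ in a neighborhood of the boundary, so dividing by $\sigma$ amounts, for a function vanishing on $\{x_1=0\}$, to an averaging of $\partial_1 u$ along the normal direction. This averaging operator is bounded on the anisotropic spaces $H^k_\ast(\R^n_+)$, and since $\partial_1$ costs two units in the $H^m_\ast$ scale, the single normal derivative taken off $u$ produces the loss from $m$ to $m-2$.

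\emph{Step 1 (Localization).} Fix $\chi\in C^\infty(\overline{\R}_+)$ with $\chi\equiv 1$ near $x_1=0$ and $\operatorname{supp}\chi\subset\{\sigma(x_1)=x_1\}$. Write $u=u_1+u_2$ with $u_1=\chi u$ and $u_2=(1-\chi)u$. On $\operatorname{supp}(1-\chi)$ the function $(1-\chi)/\sigma$ is smooth and bounded together with all its $\partial_\ast^\alpha\partial_1^k$ derivatives, so by the Moser inequality \eqref{Moser-ast} one has $u_2/\sigma\in H^m_\ast(\R^n_+)\hookrightarrow H^{m-2}_\ast(\R^n_+)$ with norm controlled by $\|u\|_{H^m_\ast}$. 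The task reduces to estimating $u_1/x_1$.

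\emph{Step 2 (Integral representation).} Since $u_1\in H^m_\ast\cap H^1_0$ and $m\geq 2$, the trace of $u_1$ on $\{x_1=0\}$ vanishes and we have
\[
\frac{u_1(x_1,x')}{x_1}=\frac{1}{x_1}\int_0^{x_1}\partial_1 u_1(s,x')\,ds=\int_0^1(\partial_1 u_1)(tx_1,x')\,dt=:(Tv)(x_1,x'),
\]
where $v=\partial_1 u_1$; the identity is first verified on smooth approximants and then extended by density. This reduces the theorem to proving that the averaging operator $(Tv)(x_1,x')=\int_0^1 v(tx_1,x')\,dt$ is bounded on $H^{m-2}_\ast(\R^n_+)$, applied to $v=\partial_1 u_1\in H^{m-2}_\ast$.

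\emph{Step 3 (Boundedness of $T$).} The crucial algebraic fact is that each conormal field $\partial_j$ for $j\geq 2$ and $Z_1=x_1\partial_1$ is invariant under the dilation $\delta_t f(x_1,x')=f(tx_1,x')$, because $Z_1\delta_t f=x_1\,t\,f'(tx_1)=\delta_t(Z_1 f)$. Therefore $\partial_\ast^\alpha T v=T(\partial_\ast^\alpha v)$ for every multi-index $\alpha$. For normal derivatives one computes directly
\[
\partial_1^k Tv(x_1,x')=\int_0^1 t^k(\partial_1^k v)(tx_1,x')\,dt,
\]
and Minkowski's integral inequality together with the change of variable $s=tx_1$ (which gives $\|(\partial_1^k v)(t\cdot,\cdot)\|_{L^2(\R^n_+)}=t^{-1/2}\|\partial_1^k v\|_{L^2}$) yields
\[
\|\partial_1^k Tv\|_{L^2(\R^n_+)}\le\Big(\int_0^1 t^{k-1/2}\,dt\Big)\|\partial_1^k v\|_{L^2(\R^n_+)}=\frac{2}{2k+1}\|\partial_1^k v\|_{L^2}.
\]
Combining the two, $\|\partial_\ast^\alpha\partial_1^k Tv\|_{L^2}\le C\|\partial_\ast^\alpha\partial_1^k v\|_{L^2}$ for every admissible $\alpha,k$, whence $\|Tv\|_{H^{m-2}_\ast}\le C\|v\|_{H^{m-2}_\ast}$ by summation over $|\alpha|+2k\le m-2$.

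\emph{Step 4 (Conclusion and main obstacle).} Applying Step 3 with $v=\partial_1 u_1$ and using that one normal derivative drops the $H^m_\ast$ order by two, $\|\partial_1 u_1\|_{H^{m-2}_\ast}\le\|u_1\|_{H^m_\ast}\le C\|u\|_{H^m_\ast}$, we obtain $\|u_1/x_1\|_{H^{m-2}_\ast}\le C\|u\|_{H^m_\ast}$, which combined with Step 1 proves \eqref{hoH}. The delicate point is the normal-direction bound in Step 3: neither a pointwise mean-value argument nor a naive Hardy estimate controls $\partial_1^k(u/x_1)$ in $L^2$, and what makes the proof go through is the averaged representation, whose normal differentiations produce the gain $t^k$ that compensates the scaling loss $t^{-1/2}$ and makes $t^{k-1/2}$ integrable on $(0,1)$. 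The hypothesis $u\in H^1_0$ is used precisely here, since otherwise the integral representation carries a boundary term $u(0,x')/x_1$ which is not in $L^2$.
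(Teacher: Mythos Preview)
The paper does not give its own proof of this theorem; it is quoted in the Appendix with a citation to \cite{MR2967997}, so there is no in-paper argument to compare against. Your proof is essentially correct and is the standard Hardy-type argument one expects for such a statement: localize to the collar where $\sigma(x_1)=x_1$, use the mean-value representation $u/x_1=\int_0^1(\partial_1 u)(tx_1,x')\,dt$, and exploit the dilation-invariance of the conormal fields together with the Minkowski/scaling bound to obtain boundedness of the averaging operator on $H^{m-2}_\ast$.

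Two small points that deserve a cleaner justification. First, in Step~1 the Moser inequality \eqref{Moser-ast} does not literally apply because $(1-\chi)/\sigma\notin L^2(\R^n_+)$; what you actually need (and what is true) is that multiplication by a $C^\infty_b$ function with all $\partial_\ast^\alpha\partial_1^k$-derivatives bounded maps $H^m_\ast$ to itself, which follows from a direct Leibniz expansion. Second, in Step~3 you tacitly replace $\sigma(x_1)\partial_1$ by $x_1\partial_1$; this is legitimate because after the cutoff both $v=\partial_1 u_1$ and $Tv=u_1/x_1$ are supported where $\sigma(x_1)=x_1$, but it is worth saying so, and likewise worth noting that the commutator $[(x_1\partial_1)^{\alpha_1},\partial_1^k]$ produces only lower-order conormal terms, so the reordering of derivatives in the final estimate is harmless once one sums over $|\alpha|+2k\le m-2$.
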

In the second anisotropic space $H^m_{\ast\ast}(\Omega)$ we have the following results. 
\begin{theorem}[\cite{MR2967997,techreport}]\label{main2}
Let $u\in H^m_{\ast\ast}({\R}^n_+)\cap H^1_0({\R}^n_+)$ for $m\geq 1$, and let $U$ be the function defined in \eqref{defH}. 
\begin{itemize}
\item[a.] If $m\geq2$  then
\begin{equation}
\begin{array}{ll}\label{hoH2}
\|U\|_{H^{m-1}_{\ast}({\R}^n_+)}\leq C\|u\|_{H^m_{\ast\ast}({\R}^n_+)}.
\end{array}
\end{equation}
\item[b.] If $m\ge3$ then
\begin{equation}
\begin{array}{ll}\label{hoH3}
\|U\|_{H^{m-2}_{\ast\ast}({\R}^n_+)}\leq C\|u\|_{H^m_{\ast\ast}({\R}^n_+)}.$$
\end{array}
\end{equation}
\end{itemize}
\end{theorem}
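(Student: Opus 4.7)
The proposed strategy is to isolate the singularity of $1/\sigma$ at $x_1=0$ with a cutoff and, on the near-boundary region, convert division by $x_1$ into a dilation-average of $\partial_1 u$, whose mapping properties on the anisotropic scale follow from explicit commutations with the conormal and normal vector fields together with a Minkowski/Hardy $L^2$ bound.

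First I would fix a cutoff $\chi\in C^\infty(\overline{\R_+})$ with $\chi\equiv 1$ near $x_1=0$ and $\supp\chi\subset[0,\delta_0]$, with $\delta_0$ small enough that $\sigma(x_1)=x_1$ on $\supp\chi$, and split $U=\chi U+(1-\chi)U$. On $\supp(1-\chi)$ one has $\sigma\ge c>0$, so $1/\sigma$ is a smooth bounded multiplier with bounded derivatives; Moser-type estimates as in Lemma~\ref{nuovo3}(ii)-(iii) give $\|(1-\chi)U\|_{H^m_{\ast\ast}(\R^n_+)}\le C\|u\|_{H^m_{\ast\ast}(\R^n_+)}$, and the inclusions $H^m_{\ast\ast}\hookrightarrow H^{m-1}_\ast$ and $H^m_{\ast\ast}\hookrightarrow H^{m-2}_{\ast\ast}$ dispatch the smooth piece for both parts (a) and (b).

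For the near-boundary piece, since $u\in H^1_0(\R^n_+)$, writing $u(x_1,x')=\int_0^{x_1}\partial_1 u(s,x')\,ds$ and substituting $s=tx_1$ yields
\begin{equation*}
\chi(x_1)\,U(x_1,x')=\chi(x_1)\int_0^1(\partial_1 u)(tx_1,x')\,dt=\chi\cdot M(\partial_1 u),
\end{equation*}
where $(Mv)(x_1,x')=\int_0^1 v(tx_1,x')\,dt$. Direct computation gives the commutation identities $\partial_j M=M\partial_j$ for $j=2,\dots,n$, $(x_1\partial_1)M=M(x_1\partial_1)$, and $\partial_1^k M=M_{t^k}\partial_1^k$, where $(M_{t^k}v)(x_1,x')=\int_0^1 t^k v(tx_1,x')\,dt$; Minkowski's integral inequality combined with the dilation scaling $\|v(t\cdot,\cdot)\|_{L^2(\R^n_+)}=t^{-1/2}\|v\|_{L^2(\R^n_+)}$ produces
\begin{equation*}
\|M_{t^k}v\|_{L^2(\R^n_+)}\le \int_0^1 t^{k-1/2}\,dt\cdot\|v\|_{L^2(\R^n_+)}=\frac{2}{2k+1}\|v\|_{L^2(\R^n_+)}.
\end{equation*}
On $\supp\chi$ the conormal derivative $(\sigma\partial_1)^{\alpha_1}$ coincides with $(x_1\partial_1)^{\alpha_1}$, hence $\partial^\alpha_\ast$ commutes with $M$ there. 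For any multi-index with $|\alpha|+2k\le m-1$, a Leibniz expansion of $\partial^\alpha_\ast\partial_1^k(\chi U)$ yields a main term $\chi\cdot M_{t^k}(\partial^\alpha_\ast\partial_1^{k+1}u)$ plus remainders where derivatives fall on $\chi$; the $L^2$ bound above controls the main term by $C\|\partial^\alpha_\ast\partial_1^{k+1}u\|_{L^2}\le C\|u\|_{H^m_{\ast\ast}}$ because $|\alpha|+2(k+1)\le m+1$ and $|\alpha|\le m$. The remainder terms are supported where $(1-\chi)\not\equiv 0$ and are absorbed by the far-from-boundary estimate. Summing over admissible $(\alpha,k)$ gives part (a). Part (b) follows immediately, since the $\|\cdot\|_{H^{m-2}_{\ast\ast}}$-norm sums over a subset of the indices appearing in the $\|\cdot\|_{H^{m-1}_\ast}$-norm (the constraint $|\alpha|+2k\le m-1$ is common, and $H^{m-2}_{\ast\ast}$ further restricts to $|\alpha|\le m-2$).

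The main obstacle is conceptual rather than computational: one must recognize that the awkward division by $\sigma$ is exactly a dilation average of $\partial_1 u$, which is the object whose anisotropic regularity the space $H^m_{\ast\ast}$ is designed to detect via its extra normal derivative ($\partial_1 u\in H^{m-1}_\ast$ by Lemma~\ref{lemma-base}(ii)). Once this is in place, the commutation $(x_1\partial_1)M=M(x_1\partial_1)$ makes the conormal structure invariant under $M$, and the remainder of the argument is careful Leibniz bookkeeping plus the one-line $L^2$-estimate above.
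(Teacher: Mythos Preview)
Your argument is correct. Note, however, that the paper does not actually prove Theorem~\ref{main2}: it is stated in the appendix with the attribution \cite{MR2967997,techreport} and the blanket remark ``For the proofs see \cite{MR2604255,MR1346224,MR1401431,MR1779863,MR2967997,techreport}.'' So there is no in-paper proof to compare against.

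Your approach---localize by a cutoff, use the $H^1_0$ condition to write $u(x_1,x')/x_1=\int_0^1(\partial_1 u)(tx_1,x')\,dt$, and exploit that the dilation-average $M$ commutes with the conormal field $x_1\partial_1$ while $\partial_1^k M=M_{t^k}\partial_1^k$, together with the Hardy/Minkowski bound $\|M_{t^k}v\|_{L^2}\le\tfrac{2}{2k+1}\|v\|_{L^2}$---is the standard Hardy-type route for such weighted estimates and is in the spirit of the proofs in the cited references. Two minor remarks on presentation. First, your appeal to Lemma~\ref{nuovo3}(ii)--(iii) for the far piece $(1-\chi)U$ is not quite the right citation, since $(1-\chi)/\sigma\notin L^2(\R^n_+)$ and hence is not in $H^m_{\ast\ast}$; what you actually need, and what is completely elementary, is that multiplication by a function in $C^\infty_b(\overline{\R_+})$ with all derivatives bounded preserves $H^m_{\ast\ast}$, which follows directly from the Leibniz rule. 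Second, your observation that part~(b) follows at once from part~(a) because the index set defining $\|\cdot\|_{m-2,\ast\ast}$ (namely $|\alpha|+2k\le m-1$ with $|\alpha|\le m-2$) is contained in that for $\|\cdot\|_{m-1,\ast}$ (namely $|\alpha|+2k\le m-1$) is correct and worth recording; it also explains why the threshold $m\ge3$ in (b) is just the condition $m-2\ge1$ needed for the space to make sense.
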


In the third anisotropic space $H^m_{\ast\ast\ast}(\Omega)$ we have the similar result. 
\begin{theorem}[\cite{techreport}]\label{main3}
Let $u\in H^m_{\ast\ast\ast}({\R}^n_+)\cap H^1_0({\R}^n_+)$ for $m\geq 2$, and let $U$ be the function defined in \eqref{defH}. 
Then
\begin{equation}
\begin{array}{ll}\label{hoH4}
\|U\|_{H^{m-1}_{\ast\ast}({\R}^n_+)}\leq C\|u\|_{H^{m}_{\ast\ast\ast}({\R}^n_+)}.
\end{array}
\end{equation}
\end{theorem}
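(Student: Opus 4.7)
By Lemma~\ref{lemma-base}(ii), the claim $U\in H^{m-1}_{\ast\ast}(\R^n_+)$ is equivalent to the two memberships $U\in H^{m-1}_{\ast}(\R^n_+)$ and $\partial_1 U=-\partial_\nu U\in H^{m-2}_{\ast}(\R^n_+)$, with an equivalent norm. The first of these, together with the bound $\|U\|_{H^{m-1}_{\ast}}\le C\|u\|_{H^m_{\ast\ast}}\le C\|u\|_{H^m_{\ast\ast\ast}}$, follows immediately from Theorem~\ref{main2}(a) applied via the inclusion $H^m_{\ast\ast\ast}(\R^n_+)\cap H^1_0(\R^n_+)\hookrightarrow H^m_{\ast\ast}(\R^n_+)\cap H^1_0(\R^n_+)$. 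Thus the entire content of the theorem reduces to proving $\|\partial_1 U\|_{H^{m-2}_{\ast}}\le C\|u\|_{H^m_{\ast\ast\ast}}$.

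The starting point for the second estimate is the algebraic identity $\sigma\,\partial_1 U=\partial_1 u-\sigma' U$, obtained by differentiating $u=\sigma U$ with respect to $x_1$. Since $u\in H^1_0(\R^n_+)$, a Taylor expansion around $x_1=0$ gives $U|_{x_1=0}=\partial_1 u|_{x_1=0}$, and using $\sigma'(0)=1$ one sees that $\phi:=\partial_1 u-\sigma' U$ vanishes on $\Gamma$. Consequently $\partial_1 U=\phi/\sigma$ is a Hardy-type quotient of a function with zero trace. The plan is to prove $\phi\in H^{m-1}_{\ast\ast}\cap H^1_0$ and then invoke Theorem~\ref{main2}(a), with $m$ replaced by $m-1$, to conclude $\phi/\sigma=\partial_1 U\in H^{m-2}_{\ast}$ with the desired quantitative control. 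The first summand of $\phi$ is already controlled by hypothesis, since the characterization $H^m_{\ast\ast\ast}=\{v\in H^m_{\ast\ast}:\partial_\nu v\in H^{m-1}_{\ast\ast}\}$ in Lemma~\ref{lemma-base}(ii) gives $\partial_1 u\in H^{m-1}_{\ast\ast}$.

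The delicate step is to place $\sigma' U\in H^{m-1}_{\ast\ast}$. A naive Leibniz expansion of $\partial^\alpha_\ast\partial_1^k(\sigma'U)$ at top order reintroduces $\partial_1 U$ itself --- precisely the quantity one seeks to estimate --- and this circularity is not broken by any single application of Theorems~\ref{main2}(a)--(b), since these lose either one normal derivative or two tangential ones, not the exact amount needed here. My proposed resolution is an induction on $m$, with base case $m=2$ where the space $H^2_{\ast\ast\ast}(\R^n_+)$ reduces essentially to $H^2$ and the classical Hardy inequality yields $U\in H^1=H^1_{\ast\ast}$. In the inductive step, every reappearance of $\partial_1 U$ produced by differentiating $\sigma' U$ is immediately substituted back through $\sigma\,\partial_1 U=\partial_1 u-\sigma' U$; this pairs each $\partial_1$ on $U$ with one extra factor of $\sigma$, converting it into a conormal derivative $\sigma\partial_1U$ plus a term involving $\partial_1 u$. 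After at most $k$ such substitutions every top-order occurrence of $\partial_1 U$ is replaced either by a component of $\partial_1 u\in H^{m-1}_{\ast\ast}$ or by lower-order derivatives of $U$ controlled by the inductive hypothesis applied to $u\in H^{m-1}_{\ast\ast\ast}$. I expect the main obstacle to be the bookkeeping required to verify that these substitutions yield terms of exactly the correct anisotropic order: it is precisely the extra normal regularity encoded in the triple-star space, namely $\partial_1 u\in H^{m-1}_{\ast\ast}$, that supplies enough tangential smoothness to close the estimate with a loss of only one degree in the $\ast$-scale.
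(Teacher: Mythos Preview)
The paper does not prove Theorem~\ref{main3}; it is quoted from the external technical report \cite{techreport}, so there is no in-text argument to compare your proposal against. Your reduction is sound: by Lemma~\ref{lemma-base}(ii) it suffices to bound $\|\partial_1 U\|_{H^{m-2}_\ast}$, and writing $\partial_1 U=\phi/\sigma$ with $\phi=\partial_1 u-\sigma' U\in H^1_0$ is the natural move, as is the observation that $\partial_1 u\in H^{m-1}_{\ast\ast}$ comes for free from the triple-star hypothesis.

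The gap is in the inductive step. Near the boundary $\sigma'\equiv1$, so $\sigma'U\in H^{m-1}_{\ast\ast}$ is locally equivalent to $U\in H^{m-1}_{\ast\ast}$, the conclusion itself; the inductive hypothesis yields only $U\in H^{m-2}_{\ast\ast}$, and feeding that into Theorem~\ref{main2}(a) returns $\partial_1 U\in H^{m-3}_\ast$, one order short. Your substitution $\sigma\,\partial_1 U=\partial_1 u-\sigma' U$ needs a spare factor of $\sigma$ on the left to absorb, which the top-order Leibniz term $\sigma'\cdot\partial^\alpha_\ast\partial_1^k U$ does not carry. Iterating via $\partial_1^k U=\tfrac{1}{k+1}(\partial_1^{k+1}u-x_1\,\partial_1^{k+1}U)$ (valid where $\sigma=x_1$) trades $\partial_1^k U$ for $\partial_1^{k+1}u$ plus a \emph{conormal} derivative of $\partial_1^{k}U$, so the recursion does not close in finitely many steps as you assert. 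What actually resolves it is the integral representation that sums this recursion: near the boundary $U(x_1,x')=\int_0^1(\partial_1 u)(sx_1,x')\,ds$, whence $\partial^\alpha_\ast\partial_1^k U(x_1,x')=\int_0^1 s^k(\partial^\alpha_\ast\partial_1^{k+1}u)(sx_1,x')\,ds$ (using that $x_1\partial_{x_1}$ commutes with the dilation $x_1\mapsto sx_1$). Minkowski and the scaling $\|f(s\,\cdot)\|_{L^2(\R_+)}=s^{-1/2}\|f\|_{L^2}$ give $\|\partial^\alpha_\ast\partial_1^k U\|\le C\|\partial^\alpha_\ast\partial_1^{k+1}u\|$, and for $k\ge1$, $|\alpha|+2k\le m$ the right-hand side lies in $\|u\|_{H^m_{\ast\ast\ast}}$ because $k+1\ge2$ and $|\alpha|+2(k+1)\le m+2$.
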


The {last} theorem provides a compactness result.
\begin{theorem}[\cite{MR1779863}]\label{th-compact}
Let $\Omega$ be a bounded
open subset of
$\R^n,\; n\geq 2$, with $C^\infty$ boundary. Let
$m\geq 1$ be an integer. Then the imbedding $\hmaa\hookrightarrow\hmua$ is compact.

\end{theorem}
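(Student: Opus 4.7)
The plan is to reduce the compactness assertion to finitely many applications of the classical Rellich--Kondrachov theorem. Given a bounded sequence $(u_n)$ in $H^m_{\ast\ast}(\Omega)$, I will show that for every multi-index $\alpha$ and integer $k\ge 0$ with $|\alpha|+2k\le m-1$, the family $v^{\alpha,k}_n:=\partial^\alpha_\ast\partial_1^k u_n$ is bounded in $H^1(\Omega)$. Rellich then supplies, for each such pair, a subsequence converging in $L^2(\Omega)$; since there are only finitely many admissible $(\alpha,k)$, a diagonal extraction yields a single subsequence $(u_{n_j})$ with $\partial^\alpha_\ast\partial_1^k u_{n_j}$ converging in $L^2$ for every admissible $(\alpha,k)$. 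By the very definition of $H^{m-1}_\ast(\Omega)$ this is convergence of $u_{n_j}$ in $H^{m-1}_\ast(\Omega)$, which is the claim.

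The core of the argument is thus the bound $\|v^{\alpha,k}\|_{H^1(\Omega)}\le C\|u\|_{H^m_{\ast\ast}(\Omega)}$ whenever $|\alpha|+2k\le m-1$. The $L^2$ estimate is immediate. For a tangential direction $j\ge 2$, since $\sigma$ depends only on $x_1$ the operator $\partial_j$ commutes with $(\sigma\partial_1)^{\alpha_1}$, so $\partial_j v^{\alpha,k}=\partial^{\alpha+e_j}_\ast\partial_1^k u$ with $|\alpha+e_j|+2k\le m$ and $|\alpha+e_j|\le m$, which therefore lies in $L^2$. The delicate point is $\partial_1 v^{\alpha,k}\in L^2$. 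Starting from the basic commutator identity $[\partial_1,\sigma\partial_1]=\sigma'\partial_1$ and the Leibniz-type expansion
\[
\partial_1(\sigma\partial_1)^{\alpha_1}=(\sigma\partial_1)^{\alpha_1}\partial_1+\sum_{i=0}^{\alpha_1-1}(\sigma\partial_1)^i\,\sigma'\,\partial_1(\sigma\partial_1)^{\alpha_1-1-i},
\]
I would iterate, moving each interior $\partial_1$ through the remaining factors $(\sigma\partial_1)^r$ on its right by applying the same identity, until $\partial_1 v^{\alpha,k}$ is written as a finite linear combination, with smooth bounded coefficients built from $\sigma$ and its derivatives, of terms of the form $\partial^\beta_\ast\partial_1^l u$ satisfying $|\beta|+2l\le m+1$ and $|\beta|\le m$. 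Every such term belongs to $L^2(\Omega)$ by the definition of $H^m_{\ast\ast}$, and this is precisely the extra index range that distinguishes $H^m_{\ast\ast}$ from $H^m_\ast$.

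On a bounded smooth domain the argument is implemented through a finite $C^\infty$ partition of unity: in interior charts $H^m_{\ast\ast}$ coincides with the standard Sobolev space $H^m$, so Rellich applies directly to the localised sequence; in boundary charts one straightens $\Gamma$ and carries out the commutator computation above on (a bounded subset of) the half-space model $\mathbb{R}^n_+$. The principal technical obstacle is exactly the step $\partial_1 v^{\alpha,k}\in L^2$: one must keep careful bookkeeping of the iterated commutators $[\partial_1,\sigma\partial_1]=\sigma'\partial_1$ to verify that every residual operator produced by the full expansion fits within the admissible index set $\{|\beta|+2l\le m+1,\,|\beta|\le m\}$ of $H^m_{\ast\ast}$. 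This is the unique place where the \emph{starred} regularity $\ast\ast$, rather than the weaker $\ast$, is actively used—it provides the one extra power of $\partial_1$ beyond $H^m_\ast$ that the commutator expansion demands—and without it the argument would fail.
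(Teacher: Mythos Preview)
The paper does not supply a proof of this theorem: it is quoted in the appendix as a result from \cite{MR1779863}, so there is no in-paper argument to compare against.

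Your approach is sound. The commutator computation you describe is in fact the identity $\partial_1(\sigma\partial_1)^{\alpha_1}=(\sigma'+\sigma\partial_1)^{\alpha_1}\partial_1$ that the paper itself records and uses (formula \eqref{commut-sigma}); expanding $(\sigma'+\sigma\partial_1)^{\alpha_1}$ as $\sum_{h\le\alpha_1} c_h(\sigma\partial_1)^h$ with smooth bounded coefficients $c_h$ gives directly that $\partial_1\partial^\alpha_\ast\partial_1^k u$ is a finite combination of terms $\partial^\beta_\ast\partial_1^{k+1}u$ with $|\beta|\le|\alpha|$, hence $|\beta|+2(k+1)\le m+1$ and $|\beta|\le m$, which is exactly the admissible index range for $H^m_{\ast\ast}$. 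Together with the tangential estimate, this yields $\partial^\alpha_\ast\partial_1^k u\in H^1(\Omega)$ for every $|\alpha|+2k\le m-1$ (consistently with the paper's remark that $H^1_{\ast\ast}(\Omega)=H^1(\Omega)$), and Rellich--Kondrachov on the bounded domain then gives the compactness after a diagonal extraction. The localisation via partition of unity and boundary straightening is routine once one knows that the anisotropic spaces are stable under multiplication by smooth functions and under boundary-preserving diffeomorphisms; these facts are part of the basic calculus for $H^m_\ast$, $H^m_{\ast\ast}$ developed in the references cited in Appendix~\ref{anisotropicspaces}.
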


\section{Acknowledgement}
The research was supported in part by the Italian research
project PRIN 20204NT8W4-002.

\bibliographystyle{plain}

\bibliography{zero_Mach}

%%%%%%%%%%%%%%%%%%%%%%%%%%%%%%%%%%%%%%%%%%%%%%%%%%%%%
%%%%%%%%%%%%%%%%%%%%%%%%%%%%%%%%%%%%%%%%%%%%
%%%%%%%%%%%%%%%%%%%%%%%%%%%%%%%%%%%%%%%%%%%%%%%%%%%%%
%%%%%%%%%%%%%%%%%%%%%%%%%%%%%%%%%%%%%%%%%%%%
%%%%%%%%%%%%%%%%%%%%%%%%%%%%%%%%%%%%%%%%%%%%%%%%%%%%%
%%%%%%%%%%%%%%%%%%%%%%%%%%%%%%%%%%%%%%%%%%%%
%%%%%%%%%%%%%%%%%%%%%%%%%%%%%%%%%%%%%%%%%%%%%%%%%%%%%
%%%%%%%%%%%%%%%%%%%%%%%%%%%%%%%%%%%%%%%%%%%%
%%%%%%%%%%%%%%%%%%%%%%%%%%%%%%%%%%%%%%%%%%%%%%%%%%%%%
%%%%%%%%%%%%%%%%%%%%%%%%%%%%%%%%%%%%%%%%%%%%
%%%%%%%%%%%%%%%%%%%%%%%%%%%%%%%%%%%%%%%%%%%%%%%%%%%%%
%%%%%%%%%%%%%%%%%%%%%%%%%%%%%%%%%%%%%%%%%%%%
\end{document}